\documentclass[11pt]{amsart}

\providecommand{\part}[1]{%
  \clearpage
  \section*{#1}%
  \addcontentsline{toc}{section}{#1}%
}

\usepackage{amsmath,amssymb,amsfonts,amsthm,amscd,indentfirst}
\usepackage{latexsym,amsxtra,mathrsfs}
 
\usepackage[top=3cm, bottom=3.5cm, right=2.5cm, left=2.2cm]{geometry}

\usepackage{hyperref}
\usepackage{microtype}

\usepackage{bookmark}
\usepackage{amsmath,thmtools,mathtools}
\usepackage{etoolbox}

\allowdisplaybreaks 
\mathtoolsset{showonlyrefs=true}
\providecommand{\mathsection}{\mathord{\S}}
\DeclareRobustCommand{\textsection}{\ifmmode\mathsection\else\S\fi}

\usepackage[
  backend=biber,
  style=alphabetic,
  sorting=nyt,
  maxbibnames=99,
  giveninits=true,
]{biblatex}
\renewbibmacro{in:}{}

\DeclareFieldFormat[article,inproceedings,incollection,unpublished]{title}{#1}
\DeclareFieldFormat[article,inproceedings,incollection,unpublished]{citetitle}{#1}

\DeclareSourcemap{%
  \maps[datatype=bibtex]{%
    \map{\step[fieldsource=shortjournal, fieldtarget=journaltitle]}%
  }%
}

\makeatletter
\newcommand*\bbx@lasthash{}
\newtoggle{bbx@dashed}
\AtBeginBibliography{\global\let\bbx@lasthash\empty}
\AtEveryBibitem{%
  \global\togglefalse{bbx@dashed}%
  \iffieldundef{fullhash}
    {\global\let\bbx@lasthash\empty}
    {\iffieldequalstr{fullhash}{\bbx@lasthash}{\global\toggletrue{bbx@dashed}}{}%
     \xdef\bbx@lasthash{\thefield{fullhash}}}%
}
\DeclareNameFormat{labelname}{%
  \ifboolexpr{togl {bbx@dashed}}
    {\ifnum\value{listcount}=1\relax\bibnamedash\fi}
    {\nameparts{#1}%
     \usebibmacro{name:family-given}
       {\namepartfamily}
       {\namepartgiven}
       {\namepartprefix}
       {\namepartsuffix}%
     \usebibmacro{name:andothers}}%
}
\makeatother

\newcounter{mparcnt}

\usepackage{fancyhdr}
\usepackage{esint}
\usepackage{enumerate}
\usepackage{xcolor}

\usepackage{pictexwd,dcpic}
\usepackage{graphicx}
\usepackage{caption}

\usepackage{graphicx}
\usepackage{caption}
\usepackage{slashed}

\usepackage{epsfig,here}
\usepackage{subfigure,here}

\newtheorem{theorem}{Theorem}[section]
\newtheorem{lemma}[theorem]{Lemma}
\newtheorem{proposition}[theorem]{Proposition}
\newtheorem{definition}[theorem]{Definition}
\newtheorem{corollary}[theorem]{Corollary}
\newtheorem{remark}[theorem]{Remark}

\newcommand{\abs}[1]{\lvert#1\rvert}
\newcommand{\Abs}[1]{\left\lvert#1\right\rvert}

\newcommand{\norm}[1]{\lVert#1\rVert}

\newcommand{\rd}{{\rm d}}
\newcommand{\rdV}{{\rm dV}}

\newcommand{\rid}{{\rm id}}

\newcommand{\D}{{\slashed{D}}}
\newcommand{\rRe}{{\rm Re}}
\newcommand{\rIm}{{\rm Im}}
\newcommand{\curl}{{\rm curl}\,}
\newcommand{\ip}{\lrcorner\,}
\newcommand{\w}{\wedge}
\newcommand{\p}{\partial}

\def\<{\langle}
\def\>{\rangle}
\def\S{\mathbb{S}}
\def\R{\mathbb{R}}

\newcommand{\eq}[1]{\begin{equation}\allowdisplaybreaks\begin{alignedat}{2} #1 \end{alignedat}\end{equation}}

\numberwithin{equation} {section}

\begin{document}

	
\title[Another sharp criterion for Dirac zero modes]
{
Another sharp criterion for Dirac zero modes
}
\date{\today}


\author{Guofang Wang}
\address{ Albert-Ludwigs-Universit\"at Freiburg,
Mathematisches Institut,
Ernst-Zermelo-Str. 1,
D-79104 Freiburg, Germany}
\email{guofang.wang@math.uni-freiburg.de}

\author{Mingwei Zhang}
\address{ Wuhan University, School of Mathematics and Statistics, 430072 Wuhan, China and 
Albert-Ludwigs-Universit\"at Freiburg,
Mathematisches Institut,
Ernst-Zermelo-Str. 1,
D-79104 Freiburg, Germany}
\email{zhangmwmath@whu.edu.cn}

\begin{abstract}
We resolve an open problem posed by Frank--Loss \cite{FL1}. Let $n\ge 3$ and let $\varphi\in L^p(\S^n)$, with $\frac{n}{n-1}<p<\infty$, be a nontrivial Dirac zero mode on $\S^n$, i.e. a nonzero spinor satisfying
\eq{
    \D\varphi = iA\cdot\varphi,
}
where $\D$ is the Dirac operator and $A$ is a vector field with $\rd A^\flat\in L^{n/2}$. We prove the sharp lower bound
\eq{
   \norm{\rd A^\flat}_{\frac{n}{2}} \ge 2\left[\frac{n}{2}\right]^{-\frac12}\frac{n-1}{n-2}S_n
   = \left[\frac{n}{2}\right]^{-\frac12}\frac{n(n-1)}{2}\omega_n^{\frac{2}{n}}.
}
Equality is attainable if and only if $n$ is odd; in that case, modulo conformal and gauge transformations, $\varphi$ is a Killing spinor and $A$ is a real multiple of the Reeb field associated with $\varphi$. The case $n=3$ was proved in our recent paper \cite{WZ26curl} using a different method. In the paper we divide the remaining cases into 3 cases: i) $n\ge 5$ is odd, ii) $n\ge 5$ is even and iii)  $n=4$. All these cases need to use different methods.

For $n\ge 5$, the argument crucally reduces to an improved Sobolev inequality on $\S^n$ under a barycenter constraint. Specifically, for $u\in W^{1,2}(\S^n)$ we consider
\eq{
    \mathfrak{a}_n \coloneqq \inf\Bigg\{ \frac{ \int \Big(\abs{\nabla u}^2 + \frac{n(n-2)}{4}u^2\Big) - \frac{n(n-2)}{4}\omega_n^{\frac{2}{n}}\norm{u}_{\frac{2n}{n-2}}^2 }{ \omega_n^{\frac{2}{n}}\norm{u}_{\frac{2n}{n-2}}^2 - \int u^2 } \,\Bigg|\, \int x\abs{u}^{\frac{2n}{n-2}}=0,\ \omega_n^{\frac{2}{n}}\norm{u}_{\frac{2n}{n-2}}^2 - \int u^2>0 \Bigg\},
}
and we obtain the following universal estimate
\eq{
    \mathfrak{a}_n > \frac{n(n-2)}{4(n^2-3n+1)},
}
which is enough for our aim. 
Determining the exact value of $\mathfrak{a}_n$ remains open.

\medskip
\noindent{\bf MSC 2020: } 53C27, 35A23, 35Q60

\noindent{\bf Keywords:} Dirac zero mode, sharp inequality, improved Sobolev inequality, spinor field, Killing vector field
\end{abstract}

\maketitle
\tableofcontents

\section{Introduction}

Zero modes of Dirac-type operators in the presence of magnetic fields play a central role in spectral theory and mathematical physics.  In $3$-dimensional case, they appear as obstructions in fermionic determinants and in semiclassical asymptotics, and they are the key mechanism behind instability phenomena for Coulomb systems coupled to magnetic fields. A quantitative way to formulate the size of a magnetic field is through conformally invariant norms.

Motivated by this viewpoint, Frank--Loss \cite{FL1} studied the Dirac zero mode equation on
$\R^3$
\eq{\label{eq:intro_R3}
    \sigma\cdot(-i\nabla-A)\varphi=0,
}
where $i$ is the imaginary unit, $\varphi$ is a spinor field, and $A$ is a vector field. They made a breakthrough by proving that if there exists a non-trivial solution, then $\norm{\curl A}_{L^{3/2}(\R^3)}\geq 2S_3$, where $S_3$ is the sharp critical Sobolev constant. It  a criterion for the existence of zero modes, though the lower bound is not sharp. 

To determine the sharp criterion is important for at least two reasons. First, in physics, $\curl A$ arises as the magnetic field in the magnetic Pauli models in the one-electron problem with spin. A classical variational argument (going back to Fr\"ohlich--Lieb--Loss \cite{Froelich-Lieb-Loss} and further developed by \cite{FL1}) shows that a universal lower bound of $\norm{\curl A}_{L^{3/2}}$ translates into an explicit quantitative obstruction to collapse, and a better lower bound leads to a better stability threshold.
Second, in geometry, $\curl A$ is the curvature of the connection $\rd - iA$ on the trivial complex line bundle. It is gauge invariant, hence it is interesting to seek an optimal lower bound. 

The present paper wants to find an optimal lower bound. Due to the conformal invariance of the zero mode equation, equation on $\R^n$ can be equivalently transferred to equation on $\S^n$ via stereographic
projection. Hence we focus in the paper on $\S^n$.
Our main goal is to establish the sharp lower bound for $\|\rd A^\flat\|_{L^{n/2}(\S^n)}$ under the
assumption that the zero mode equation admits a non-trivial solution, where $A^\flat$ is the dual $1$-form of $A$. We remark that $\norm{\rd A^\flat}_{L^{n/2}}$ is the higher dimensional version of $\norm{\curl A}_{L^{3/2}}$, and is  conformally invariant. 

For comparison, we recall the higher-dimensional  estimate of Frank--Loss \cite[Theorem~1.3]{FL1}. Let $n\ge 3$ and set $\nu\coloneqq [n/2]$. Then the spinor bundle has complex rank $2^\nu$. Assume that $\varphi\in L^p$ ($\frac{n}{n-1}<p<\infty$) is a nontrivial weak solution of
\eq{\label{eq:intro_FL13}
    \sigma\cdot(-i\nabla-A)\varphi=0,
}
where $\sigma=(\sigma_1,\dots,\sigma_n)$ is a system of higher-order Pauli matrices, standing for the representative of the Clifford algebra. It is equivalent to
\eq{\label{eq1.0}
    \D\varphi = iA\cdot\varphi,
}
where $\D$ is the Dirac operator. Let $\abs{\rd A^\flat}$ be the form length, namely 
\eq{\label{eq:intro_FL13_B}
    \abs{\rd A^\flat} = \Big(\sum_{j<k}\abs{\p_jA_k-\p_k A_j}^2\Big)^{\frac{1}{2}}.
}
Then
\eq{\label{eq:intro_FL13_ineq}
    \norm{\rd A^\flat}_{L^{\frac{n}{2}} } = \Big(\int_{\R^n}|\rd A^\flat|^{\frac{n}{2}}\,\rd x\Big)^{\!\frac{2}{n}}
    \ge \nu^{-\frac12}\,\frac{n-1}{n-2}\,S_n,
}
where $S_n=\frac{n(n-2)}{4}\abs{\S^n}^{\frac{2}{n}}$ is the optimal Sobolev constant, and $A^\flat $ is the dual $1$-form of $A$. 

We now state our main result. 

\begin{theorem}\label{thm_main}
Let $\varphi\in L^p$ ($\frac{n}{n-1}<p<\infty$) be a non-trivial zero mode on $\S^n$ ($n\geq3$) solving \eqref{eq1.0}, and  $B=\rd A^\flat\in L^{n/2}$ be a magnetic field. Then
\eq{\label{eq:main_ineq}
   \norm{\rd A^\flat}_{\frac{n}{2}} \geq 2 \nu^{-\frac 12} \frac{n-1}{n-2}S_n =\nu^{-\frac 12} \frac{n(n-1)}{2}\omega_n^{\frac{2}{n}}, \qquad \omega_n\coloneqq\abs{\S^n},
}
with equality if and only if $n$ is odd, and modulo conformal and gauge transformations, $\varphi$ is a Killing spinor and $A$ is a (real) multiple of the Reeb field associated with $\varphi$.
\end{theorem}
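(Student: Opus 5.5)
We follow the Frank--Loss strategy, but replace the plain Sobolev inequality by a sharper version that keeps track of a gauge/conformal normalization.

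\textbf{Step 1 (reduction to a scalar inequality).} Squaring the equation via the Lichnerowicz formula on $\S^n$ (scalar curvature $n(n-1)$) gives, for $D_A=\D-iA\cdot$, the pointwise identity
\[
0=\int \abs{D_A\varphi}^2=\int\abs{\nabla^A\varphi}^2+\frac{n(n-1)}{4}\int\abs{\varphi}^2+\frac12\int\<i\,\rd A^\flat\cdot\varphi,\varphi\>.
\]
We then apply a refined Kato inequality for the twisted Dirac operator,
\[
\abs{\nabla^A\varphi}^2\ge\frac{n}{n-1}\,\abs{\nabla\abs{\varphi}}^2 ,
\]
together with the Clifford bound $\abs{\<i B\cdot\varphi,\varphi\>}\le \sqrt{\nu}\,\abs{B}\,\abs{\varphi}^2$. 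Setting $u=\abs{\varphi}$ and applying Hölder, we obtain
\[
\frac{n}{n-1}\int\abs{\nabla u}^2+\frac{n(n-1)}{4}\int u^2\le \frac{\sqrt{\nu}}{2}\norm{B}_{n/2}\norm{u}_{\frac{2n}{n-2}}^2 .
\]
Multiplying by $\frac{n-1}{n}$ turns the left side into $\int\abs{\nabla u}^2+\frac{(n-1)^2}{4}\int u^2$. The regularity needed to justify these steps for $L^p$ zero modes (giving $u\in W^{1,2}$) is taken from Frank--Loss.

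\textbf{Step 2 (conformal normalization and the improved Sobolev inequality).} The zero-mode problem is conformally invariant, and $u$ transforms with weight $\frac{n-1}{2}$. We therefore choose a conformal transformation making the barycenter $\int x\,u^{2n/(n-2)}=0$; this is possible by the standard degree argument. We now write
\[
\frac{(n-1)^2}{4}=\frac{n(n-2)}{4}+\frac14 .
\]
Plain Sobolev on the conformal Laplacian part yields only the Frank--Loss bound. Instead we use the barycenter-constrained improvement with constant $\mathfrak a_n$ and interpolate, using Hölder $\int u^2\le \omega_n^{2/n}\norm{u}^2_{2n/(n-2)}$. The goal is to show that the left side is at least $\frac{n(n-1)}{4}\omega_n^{2/n}\norm{u}^2\cdot\frac{n-1}{n}\cdot\frac{n}{n-1}$, that is, $\frac{(n-1)^2}{4}\cdot\frac{n}{n-1}\,\omega_n^{2/n}\norm u^2$. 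This is exactly the equality value for constant $u$, and it gives $\norm{B}_{n/2}\ge \nu^{-1/2}\frac{n(n-1)}{2}\omega_n^{2/n}$.

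A linear interpolation between Sobolev and Hölder shows that the required coefficient is precisely $\mathfrak a_n>\frac{n(n-2)}{4(n^2-3n+1)}$. This is the key analytic input: we must prove this bound by expanding in spherical harmonics, where the first eigenspace is controlled by the barycenter condition, combined with a compactness/concentration argument. We expect this to be the main obstacle.

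\textbf{Step 3 (dimensions and equality).} For $n=4$ the exponent $2n/(n-2)=4$ makes the constant in Step 2 borderline, so we would instead use the quaternionic/complex structure of $\S^4$ spinors, in particular the splitting into chiral halves, to sharpen the Clifford bound directly. For $n=3$ the result is cited from \cite{WZ26curl}.

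Equality forces every inequality to be sharp. Hence $u$ is constant after normalization, equality holds in Kato, so $\nabla^A\varphi$ is of twistor type, and $B\cdot\varphi$ is an eigenspinor of maximal eigenvalue $\sqrt{\nu}\abs B$. Together these make $\varphi$ a Killing spinor after gauge, with $B$ parallel to $\rd$ of its Reeb field. Maximal eigenvalue $\sqrt\nu$ requires $B$ to act as a complex structure on all $2\nu=n-1$ directions orthogonal to a line, which is possible only for $n$ odd (Sasakian structure on $\S^{2m+1}$). Conversely, we would verify directly that the Killing spinor with $A$ a multiple of the Reeb field attains equality.
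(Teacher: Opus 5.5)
\textbf{Main gap: Step 1 discards half of the sharp constant.} Your Step 1 throws away exactly the half of the energy that the sharp constant needs, and a normalization slip hides this.

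With $\abs{\rd A^\flat}$ the form length used in the statement, the Lichnerowicz formula is $(\D^A)^2=(\nabla^A)^*\nabla^A+\frac{n(n-1)}{4}-i\,\rd A^\flat\cdot$ with $\rd A^\flat\cdot=\sum_{j<k}\rd A^\flat(e_j,e_k)\,e_j\cdot e_k\cdot$. The sharp pointwise bound is $\abs{\rRe\<i\,\rd A^\flat\cdot\varphi,\varphi\>}\le\sqrt{\nu}\,\abs{\rd A^\flat}\abs{\varphi}^2$. There is no factor $\frac12$; if you obtain it from the double sum $\sum_{j,k}$, the Clifford bound becomes $2\sqrt\nu\abs{B}$ instead. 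The extremal confirms this. For a Killing spinor with constant modulus and $A$ a multiple of the Reeb field, one has $\rd A^\flat=n\beta$, $\abs{\beta}^2=\nu=\frac{n-1}{2}$ and $\abs{\nabla^A\varphi}^2=\frac{n(n-1)}{4}\abs{\varphi}^2$, and the identity balances only with coefficient $1$.

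Correctly normalized, Step 1 reads
\[
\frac{n}{n-1}\int\abs{\nabla u}^2+\frac{n(n-1)}{4}\int u^2\le\sqrt{\nu}\,\norm{B}_{\frac n2}\norm{u}_{\frac{2n}{n-2}}^2 .
\]
No improved Sobolev inequality can rescue this. Constants satisfy the barycenter condition and make the left side equal to $\frac{n(n-1)}{4}\omega_n^{2/n}\norm{u}_{\frac{2n}{n-2}}^2$. So this route yields at most $\norm{B}_{n/2}\ge\nu^{-1/2}\frac{n(n-1)}{4}\omega_n^{2/n}$, which is the Frank--Loss constant, i.e.\ one half of \eqref{eq:main_ineq}.

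The loss sits in the Kato step, which on the extremal is as far from equality as possible: $\nabla\abs{\varphi}=0$ while $\nabla^A\varphi\neq0$. This also contradicts your equality analysis, which asserts equality in Kato. Relatedly, the threshold $\frac{n(n-2)}{4(n^2-3n+1)}$ does not come out of your inequality; in your own normalization you would only need $\mathfrak a_n\ge\frac14$.

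\textbf{Missing idea: a lower bound on the Kato remainder.} Write $\abs{\nabla^A\varphi}^2=\frac{n}{n-1}\abs{\nabla f}^2+f^2\abs{S}^2$ as in \eqref{eq:6}. The paper controls $\int u^2\abs{S}^2$ through the $2$-form $\beta$ induced by $\phi=\varphi/\abs{\varphi}$:
\begin{enumerate}
\item the curvature identity of Lemma~\ref{lem1.8} is paired with $\beta$;
\item it is integrated by parts using $\rd^*(u^2\beta)=-2u^2\rIm\<S_{e_j},\phi\>e^j$ (Lemma~\ref{lem1.7});
\item the estimate is closed with Lemmas~\ref{lem1.5} and \ref{lem1.6}, giving \eqref{eq:155} and then Proposition~\ref{prop_key}.
\end{enumerate}
The gradient coefficient $\frac{2(n^2-3n+1)}{(n-2)^2}$ in Proposition~\ref{prop_key} is what produces the threshold for $\mathfrak a_n$.

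\textbf{Further points.} You also need $u=\abs{\varphi}^{\frac{n-2}{n-1}}$ rather than $u=\abs{\varphi}$, i.e.\ divide the pointwise identity by $\abs{\varphi}^{\frac{2}{n-1}}$ before integrating. This does two things:
\begin{enumerate}
\item it turns the gradient part into exactly $\frac{n-1}{n-2}$ times the conformal-Laplacian form;
\item it makes $u^{\frac{2n}{n-2}}=\abs{\varphi}^{\frac{2n}{n-1}}$ the conformally invariant density, so Hersch balancing is the standard one.
\end{enumerate}
With $u=\abs{\varphi}$ neither holds, and even the plain Sobolev step then falls short of Frank--Loss.

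For even $n$, Proposition~\ref{prop_key} carries a deficit $-\frac{n}{4(n-1)}\int u^2$. The paper removes it with a chirality argument based on $\int(\rd A^\flat)^{\wedge\frac n2}=0$ (Proposition~\ref{thm7.1}). The case $n=4$ requires the exact identity of Lemma~\ref{lem3.3} together with $\ast\beta=-\beta$ and $\int\rd A^\flat\wedge\rd A^\flat=0$.

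Your parity heuristic in Step 3 is also not the right mechanism. For even $n$ the bound $\sqrt\nu\abs{B}$ is attained pointwise; the obstruction to equality is global.
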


Theorem \ref{thm_main} gives an affirmative answer to an open problem asked by Frank--Loss \cite{FL1}.
The $n=3$ case was proved in our previous work \cite{WZ26curl}, by using the sharp curl--Sobolev inequality, together with a peculiar phenomenon of $3$-dimensional case. In this paper, we prove it for the general case $n\geq4$.
 For $n=4$, we exploit the $4$-dimensional Clifford structure to obtain a finer estimate, which leads to an even better lower bound.
 For $n\ge 5$, the first main step is using  a  refined decomposition  of $\nabla\varphi$ other than the one used in \cite{WZ26curl} to obtain the following estimates.

\begin{proposition}\label{prop_key0} 
Under the same assumptions as in Theorem \ref{thm_main}, we have for $u\coloneqq\abs{\varphi}^{\frac{n-2}{n-1}}$

\noindent(1) if $n$ is odd, then
\eq{\label{eq:key_odd0}
    \Big[\frac{n}{2}\Big]^{\frac{1}{2}} \norm{\rd A^\flat}_{\frac{n}{2}} \norm{u}_{\frac{2n}{n-2}}^2\geq 
   \int  u^2 \<\rd A^\flat,\beta\>
    \ge   \frac{2(n^2-3n+1)}{(n-2)^2}\int \abs{\nabla u}^2+ \frac{n(n-1)}{2}\int u^2;
}

\noindent(2) if $n$ is even, then
\eq{\label{eq:key_even0}
    \Big[\frac{n}{2}\Big]^{\frac{1}{2}} \norm{\rd A^\flat}_{\frac{n}{2}} \norm{u}_{\frac{2n}{n-2}}^2\geq 
  \int   u^2 \<\rd A^\flat,\beta\> \ge \frac{2(n^2-3n+1)}{(n-2)^2}\int \abs{\nabla u}^2 + \Big(\frac{n(n-1)}{2}-\frac{n}{4(n-1)}\Big)\int u^2.
}
Here $\beta$ is a $2$-form induced by $\varphi$, see Definition~\ref{def3.2}. \end{proposition}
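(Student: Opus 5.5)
Following Frank--Loss, I would expand $\abs{\D\varphi - iA\cdot\varphi}^2=0$ via a Weitzenböck formula for the twisted connection $\nabla^A=\nabla-iA$, localized with the weight $\abs{\varphi}^{-\frac{n}{n-1}}$ and with the $\nabla\varphi$ part decomposed more finely. Throughout I take $\beta$ to be the normalized real 2-form
\[
\beta(X,Y)=\frac{\rIm\<X\cdot Y\cdot\varphi,\varphi\>}{\abs{\varphi}^2}\qquad(\text{anticipating Definition~\ref{def3.2}}).
\]
The point of this normalization is that the curvature term in the Lichnerowicz formula for $(\D-iA\cdot)^2$ is $\frac{1}{2}\,i\,\rd A^\flat\cdot$. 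Its pairing with $\varphi$ is then $\abs{\varphi}^2\<\rd A^\flat,\beta\>$, up to a sign convention. The scalar curvature term on $\S^n$ contributes $\frac{n(n-1)}{4}\abs{\varphi}^2$.

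\textbf{First inequality.} This is pointwise linear algebra followed by Hölder. For a unit spinor, the 2-form $\beta$ satisfies $\abs{\beta}^2\le\nu$ in the form norm: expand $\beta$ in an orthonormal frame and use that the $2^\nu$-dimensional Clifford module forces $\sum_{j<k}\abs{\<e_je_k\varphi,\varphi\>}^2\le\nu\abs{\varphi}^4$. One sees this by putting $\beta$ in normal form as a sum of $\nu$ planes, each with coefficient of modulus at most $1$. Hence
\[
\int u^2\<\rd A^\flat,\beta\>\le\nu^{1/2}\int u^2\abs{\rd A^\flat}\le\nu^{1/2}\norm{\rd A^\flat}_{n/2}\norm{u}_{2n/(n-2)}^2 .
\]

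\textbf{Second inequality.} I would apply the Weitzenböck identity to $\psi=\abs{\varphi}^{-\alpha}\varphi$ with a suitable $\alpha$, or equivalently integrate $\abs{\nabla^A\varphi}^2$ against $\abs{\varphi}^{-\frac{n}{n-1}}$; this turns the right side into $u^2=\abs{\varphi}^{\frac{2(n-2)}{n-1}}$ weights. A cutoff or regularization argument is needed near the zero set of $\varphi$, using $\varphi\in L^p$ and elliptic regularity as in Frank--Loss. The zero mode equation then gives
\[
\int u^2\<\rd A^\flat,\beta\>=\int\abs{\varphi}^{-\frac{n}{n-1}}\Big(2\abs{\nabla^A\varphi}^2+\frac{n(n-1)}{2}\abs{\varphi}^2\Big)+(\text{boundary/weight terms}),
\]
with constants to be fixed. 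The refined decomposition is needed to bound $\abs{\nabla^A\varphi}^2$ from below. I would write $\nabla^A_X\varphi=\frac{X\abs{\varphi}}{\abs{\varphi}}\varphi+i\,\theta(X)\varphi+\eta_X$ with $\eta_X\perp_{\mathbb R}\{\varphi,i\varphi\}$, together with the twistor splitting $\nabla^A_X\varphi=P_X\varphi-\frac1n X\cdot\D^A\varphi$, where $\D^A\varphi=0$. This makes $\nabla^A\varphi$ traceless in the Clifford sense: $\sum_j e_j\cdot\nabla^A_{e_j}\varphi=0$. The constraint ties the $\nabla\abs{\varphi}$ component to the $\eta$ components via $e_j\cdot\varphi$, and ties the $\theta$ components likewise via $ie_j\cdot\varphi$. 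For odd $n$, $\varphi$, $i\varphi$ and the $e_j\cdot\varphi$, $ie_j\cdot\varphi$ are all mutually orthogonal, so the projection is clean. This should give the Kato-type bound $\abs{\nabla^A\varphi}^2\ge c_n\abs{\nabla\abs{\varphi}}^2+(\text{nonnegative }\theta\text{-part})$ with $c_n=\frac{n}{n-1}$, possibly improved by also discarding $\theta$ optimally. Substituting $\abs{\varphi}=u^{\frac{n-1}{n-2}}$ and integrating by parts the cross terms from the weight converts everything into $\int\abs{\nabla u}^2$ and $\int u^2$. Optimizing the exponent should produce the coefficient $\frac{2(n^2-3n+1)}{(n-2)^2}$, and I would check this against the target while fixing $\alpha$.

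\textbf{Main obstacle: the even case.} For even $n$ the spinors $e_j\cdot\varphi$ and $i\varphi$ need not be orthogonal, since the volume element can relate $\varphi$ to Clifford images. The Kato projection then loses a little, and I expect this loss to appear precisely as the $-\frac{n}{4(n-1)}\int u^2$ correction. I would try to absorb the non-orthogonal overlap, through $\rRe\<e_j\varphi,i\varphi\>$-type terms, into $\nu\abs{\varphi}^2$ and bound it crudely by $\abs{\varphi}^2$. That crude bound is what should yield the stated loss.
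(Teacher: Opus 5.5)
Your first inequality is fine and is the paper's argument: $\abs{\beta}^2\le[\frac n2]$ from the normal form of $\beta$, then Cauchy--Schwarz and H\"older. The gap is in the second inequality. A Weitzenb\"ock identity combined with a Kato-type splitting cannot reach the stated constants, and the identity you display is normalized in exactly the way that hides this. Test $(\D^A)^2\varphi=0$ against $\abs{\varphi}^{-\frac{2}{n-1}}\varphi$; your weight $\abs{\varphi}^{-\frac{n}{n-1}}$ produces $u$, not $u^2$. Using the form inner product for which $\abs{\beta}^2\le[\frac n2]$, you get
\[
\int u^2\langle\rd A^\flat,\beta\rangle=\frac{n-1}{n-2}\int\abs{\nabla u}^2+\frac{n(n-1)}{4}\int u^2+\int u^2\abs{S}^2 .
\]
Here $\abs{\nabla^A\varphi}^2=\frac{n}{n-1}\abs{\nabla\abs{\varphi}}^2+\abs{\varphi}^2\abs{S}^2$ is precisely the refined Kato splitting you describe. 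The coefficients are $1$ and $\frac{n(n-1)}{4}$, not $2$ and $\frac{n(n-1)}{2}$. The doubled version is false: for a Killing spinor with $A$ a multiple of its Reeb field ($u$ constant, $\rd A^\flat=n\beta$, $\abs{\beta}^2=\frac{n-1}{2}$), the left side is $\frac{n(n-1)}{2}\int u^2$, while $\abs{S}^2=\frac{n(n-1)}{4}\neq0$.

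So discarding the remainder only yields the Frank--Loss constants, about half of what is claimed, and no choice of exponent changes that. At constant $u$, the claim for odd $n$ is equivalent to $\int u^2\abs{S}^2\ge\frac{n(n-1)}{4}\int u^2$. In the extremal configuration $\nabla\abs{\varphi}=0$, so all of $\nabla^A\varphi$ lies in the part a Kato inequality throws away. What you actually need is the non-pointwise bound
\[
\int u^2\abs{S}^2\ge\frac{n(n-3)}{(n-2)^2}\int\abs{\nabla u}^2+\Big(\frac{n^2}{4}-\frac{n}{2(n-1)}\Big[\frac n2\Big]\Big)\int u^2 .
\]

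The missing idea is a second identity expressing $\rd A^\flat$ through $\phi=\varphi/\abs{\varphi}$ and your $\theta_j=\rIm\langle\nabla^A_{e_j}\phi,\phi\rangle$. It comes from the curvature $[\nabla^A_{e_j},\nabla^A_{e_k}]=-\frac12e_j\cdot e_k\cdot-i\,\rd A^\flat(e_j,e_k)$:
\[
\rd A^\flat(e_j,e_k)=-e_j(\theta_k)+e_k(\theta_j)+\frac12\beta(e_j,e_k)-2\rIm\langle\nabla^A_{e_j}\phi,\nabla^A_{e_k}\phi\rangle .
\]
Pair this with $u^2\beta$ and integrate:
\begin{enumerate}
\item The exact part becomes a pairing of $\theta$ with $\rd^*(u^2\beta)=-2u^2\rIm\langle S_{e_j},\phi\rangle e^j$; this step uses $\D^A\varphi=0$ again.
\item The quadratic part is bounded by $\abs{\nabla^A\phi}^2-\abs{\theta}^2$, after putting $\beta$ in normal form with coefficients in $[-1,1]$.
\item The Clifford trace-free condition $e_j\cdot S_{e_j}=0$ gives $\sum_j\rIm\langle S_{e_j},\phi\rangle^2\le\frac{n-1}{n}\abs{S}^2$.
\end{enumerate}
Together these yield
\[
\int u^2\langle\rd A^\flat,\beta\rangle\le\frac12\Big[\frac n2\Big]\int u^2+\frac{n-1}{(n-2)^2}\int\abs{\nabla u}^2+\frac{2n-1}{n}\int u^2\abs{S}^2 .
\]
Eliminating $\int u^2\abs{S}^2$ between this and the Weitzenb\"ock identity gives exactly the constants of the proposition.

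This also corrects your account of the even case. The loss $\frac{n}{4(n-1)}$ comes only from the term $\frac12\int u^2\abs{\beta}^2\le\frac12[\frac n2]\int u^2$, where $[\frac n2]=\frac n2$ rather than $\frac{n-1}{2}$. Your orthogonality heuristic is in fact reversed. In odd dimensions $\rRe\langle e_j\cdot\varphi,i\varphi\rangle$ is generically nonzero; for $n=3$ it defines the unit field $\xi$ with $\xi\cdot\phi=i\phi$. For a chiral spinor in even dimensions, $e_j\cdot\varphi$ has the opposite chirality and is orthogonal to both $\varphi$ and $i\varphi$.
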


The second main step is the following improved Sobolev inequality under barycenter constraint.

\begin{theorem}\label{thm_a_n0}
Let $n\geq5$. For $u\in W^{1,2}(\S^n)$ we set
\eq{\label{eq:a_n0}
    \mathfrak{a}_n \coloneqq \inf\Bigg\{ \frac{ \int \big(\abs{\nabla u}^2 + \frac{n(n-2)}{4}u^2\big) - \frac{n(n-2)}{4}\omega_n^{\frac{2}{n}}\norm{u}_{\frac{2n}{n-2}}^2 }{ \omega_n^{\frac{2}{n}}\norm{u}_{\frac{2n}{n-2}}^2 - \int u^2 } \,\Bigg|\, \int x\abs{u}^{\frac{2n}{n-2}}=0, \ \omega_n^{\frac{2}{n}}\norm{u}_{\frac{2n}{n-2}}^2 - \int u^2>0 \Bigg\}.
}
Then
\eq{\label{eq:goal0}
    \mathfrak{a}_n > \frac{n(n-2)}{4(n^2-3n+1)}.
}
\end{theorem}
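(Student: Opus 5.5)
We argue by contradiction with a compactness and second-variation scheme. Write $p=\frac{2n}{n-2}$, $\lambda=\frac{n(n-2)}{4}$, $E(u)=\int(\abs{\nabla u}^2+\lambda u^2)-\lambda\omega_n^{2/n}\norm{u}_p^2$ and $D(u)=\omega_n^{2/n}\norm{u}_p^2-\int u^2$, and normalize $\norm{u}_p=1$. The sharp Sobolev inequality gives $E\ge 0$, and Hölder gives $D\ge0$. The target constant $c_n:=\frac{n(n-2)}{4(n^2-3n+1)}$ is strictly less than $\lambda$ for $n\ge5$, since $n^2-3n+1>1$. The strict inequality $\mathfrak a_n>c_n$ follows if we show two things: (a) the quotient $E/D$ is bounded below by some $c>c_n$ along any minimizing sequence that degenerates, and (b) any admissible $u$ at which the infimum is attained satisfies $E(u)/D(u)>c_n$.

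\textbf{Degenerate regimes.} Take a minimizing sequence $u_k$. Replacing $u_k$ by $\abs{u_k}$ does not change the constraint or $D$ and does not increase $E$, so we may assume $u_k\ge0$.

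First suppose $D(u_k)\to0$. Then $u_k$ approaches the constants in $L^2$. Expanding $u=\bar u(1+v)$, $D$ is to leading order $\frac{4}{n-2}\bar u^2\int(v-\bar v)^2$ (up to normalization), while $E$ is to leading order $\bar u^2\big(\int\abs{\nabla v}^2-n\int(v-\bar v)^2\big)$. The barycenter constraint linearizes to $\int x\,v=0$, which removes the first spherical harmonics. On the remaining modes $\ell\ge2$ we have $\int\abs{\nabla v}^2\ge 2(n+1)\int v^2$. The limiting quotient is therefore at least $\frac{(n+2)(n-2)}{4}$, which is far larger than $c_n$.

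Next suppose concentration occurs (bubbling). The barycenter constraint forbids a single bubble, since a single bubble has its barycenter near the concentration point. Profile decomposition then forces either $D\to$ a value bounded away from zero together with $E$ bounded below by a multiple of it, or at least two bubbles. In the two-bubble case $\norm{u}_p^2\approx(a^p+b^p)^{2/p}$, while the energy scales like $a^2+b^2$. The resulting quotient is of order $\lambda\big((a^2+b^2)/(a^p+b^p)^{2/p}-1\big)$ and is bounded below by $\lambda(2^{2/n}-1)$-type constants. I would check that this exceeds $c_n$; this is plausible since $c_n\sim\frac14$.

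\textbf{Attained case.} If a minimizer $u\ge0$ exists, it satisfies the Euler--Lagrange equation
\eq{
-\Delta u+\lambda u-\lambda\omega_n^{2/n}u^{p-1}=\mathfrak a_n\big(\omega_n^{2/n}u^{p-1}-u\big)+\mu\cdot x\,u^{p-1},
}
with a Lagrange multiplier $\mu\in\R^{n+1}$ coming from the barycenter constraint, and with $\norm{u}_p=1$. The plan is to test this equation against $u$, which reproduces $E=\mathfrak a_nD$ after using $\int x\,u^p=0$. Then test it against the conformal vector fields $\nabla x_i\cdot\nabla u$, i.e. a Kazdan--Warner/Pohozaev identity, which should force $\mu=0$. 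Finally, test against $u\,x_i$ and use second variation along the conformal flow to compare $\int\abs{\nabla u}^2$ with $\int u^2$. Combining these identities with the second variation of $E-\mathfrak aD$ in directions $x_i u$ should yield a Bochner-type inequality showing that $\mathfrak a_n\le c_n$ forces $u$ to be constant. This contradicts $D>0$.

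\textbf{Main obstacle.} I expect this last rigidity step to be the hardest: extracting the explicit constant $c_n$ from the Pohozaev and second-variation identities. My first attempt would be an Obata/Bidaut-Véron--Véron style integral identity for $w=u^{-2/(n-2)}$, retaining the deficit term, in the spirit of the proofs of Beckner-type improved Sobolev inequalities under center-of-mass constraints.
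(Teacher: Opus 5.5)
Your outer architecture is the paper's: argue by contradiction, produce a minimizer, then exploit minimality. Your treatment of the regime $D\to0$ also agrees with the paper's analysis of near-constant minimizing sequences. However, the step that actually produces $c_n$ is missing, and one intermediate claim is false.

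The Kazdan--Warner identity does not force $\mu=0$. Testing the Euler--Lagrange equation against $\<\nabla u,\nabla x_j\>$ and $x_ju$ gives $\mathfrak a_n\fint xu^2=-\frac{n-2}{2n}(\rid-M)\mu$ with $M=\fint x\otimes x\,u^{\frac{2n}{n-2}}$. So $\mu=0$ only if $\fint xu^2=0$, and the constraint on $\fint x\,u^{\frac{2n}{n-2}}$ does not give that. Hence the Obata/Bidaut-V\'eron--V\'eron route is not available as stated. That identity yields rigidity only for $\kappa\le\frac{n(n-2)}4$, whereas here $\kappa=\frac{n(n-2)}4+\mathfrak a_n$ lies above the threshold, and the nonlinearity carries the extra weight $\mu\cdot x$. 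Nothing in your plan explains where $c_n$ would come from. Moreover, $x_iu$ is not an admissible direction, since $\fint x\,u^{\frac{n+2}{n-2}}(x_iu)=Me_i\neq0$.

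The paper supplies three ingredients your plan lacks. Write $\lambda_{\max}$ for the top eigenvalue of $M$, to avoid a clash with your $\lambda$.
\begin{enumerate}
\item From $\mathfrak a_n\le c_n$ it derives $\lambda_{\max}<\frac3{10}$. It applies the sharp Sobolev inequality to $f_\pm=u\sqrt{(1\pm\ell)/2}$, using $\abs{\nabla f_+}^2+\abs{\nabla f_-}^2=\abs{\nabla u}^2+\frac14u^2$, and expands $(1\pm\ell)^{\frac n{n-2}}$ to third order.
\item It tests the second variation on $v=w+q\ell u$. Here $w=\<\nabla u,\nabla\ell\>-\frac{n-2}2\ell u$ is the infinitesimal M\"obius deformation, $\ell=e\cdot x$ with $Me=\lambda_{\max}e$, and $q=\frac{n-2}{2n}\frac{1-\lambda_{\max}}{\lambda_{\max}}$ is exactly what the linearized barycenter constraint demands.
\item It computes $Q(v,v)$ in closed form from $Lw$ and $L(\ell u)$. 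It controls the $\mu$-terms not by killing $\mu$ but by bounding $\abs\mu$ and $\abs{\fint(\mu\cdot x)\ell^2u^{\frac{2n}{n-2}}}$ by multiples of $\kappa(1-\fint u^2)$ through the Kazdan--Warner identity. It ends with $Q(v,v)<0$, which uses the stability of the minimizer, not merely its Euler--Lagrange equation.
\end{enumerate}

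Your bubbling dichotomy is also not justified. One bubble together with a nonzero weak limit is compatible with $\int x\abs u^{\frac{2n}{n-2}}=0$, so concentration does not force two bubbles. Comparing with the two-antipodal-bubble value $\frac{n(n-2)}4(2^{2/n}-1)$, which does exceed $c_n$, says nothing about that mixed configuration. The claim ``$E$ bounded below by a multiple of $D$'' would need a multiple larger than $c_n$, which is the statement itself.

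The paper sidesteps profile decompositions. It takes minimizers $u_j$ of the subcritical problems with exponents $p_j\nearrow\frac{2n}{n-2}$, which solve Euler--Lagrange equations with uniformly bounded multipliers. Every atom of the limiting measure therefore has mass at least $\big(\frac{n(n-2)}{4(\kappa+\abs\mu)}\big)^{n/2}>\frac3{10}$. The moment-matrix bound from the first ingredient, which again uses $\mathfrak a_n\le c_n$, caps every atom's mass at $\frac3{10}$, so no atom can form.
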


Theorem \ref{thm_main} for odd $n\ge5$
follows then from \eqref{eq:key_odd0} and Theorem \ref{thm_a_n0}.  However, when $n\ge 6$ is even,  we only have a weaker estimate \eqref{eq:key_even0}, which is not enough. In order to deal with this problem, we use  the even dimensional chirality splitting of the Clifford algebra to improve  
\eqref{eq:key_even0}, see Subsection~\ref{sec7.2}. It is interesting to see that the  quantitative stability result in \cite{DEFFL22} implies Theorem \ref{thm_main} for sufficiently large dimension $n$, without using Theorem \ref{thm_a_n0}.

Since Theorem \ref{thm_a_n0} has its own interest,  we leave the detailed discussion and the proof in Part~\ref{part2}.

It worth to mention that Frank--Loss \cite{Frank_Loss_2024} provided a sharp criterion for the existence of zero modes in terms of another conformally invariant norm by
proving
\eq{
    \norm{A}_{L^n(\S^n)}^2 \geq \frac{n}{n-2}S_n = \frac{n^2}{4}\omega_n^{\frac{2}{n}}.
 }
 with equality if and only if $n$ is odd. They also classified the odd-dimensional equality case. Their results were generalized to general spin manifolds by \cite{Reuss25} and \cite{WZ25b}, together with classification of the equality case. Theorem \ref{thm_main} provides another sharp criterion in $\S^n$. It is interesting to ask if one can generalize Theorem \ref{thm_main} to general closed spin manifolds.

\

\noindent\textit{Organization of the paper.} 
We divide this paper into two parts. Part~\ref{part1} gives another sharp criterion for zero modes, in terms of $\norm{\rd A^\flat}_{n/2}$. Section~\ref{sec2} reviews the zero mode equation and its conformal and gauge invariance. Section~\ref{sec3} develops an argument of orthogonal decomposition, which is the key step in our proof; and constructs an important $2$-form induced by the spinor, which helps us to estimate the norm of the magnetic field. Section~\ref{sec4} gives a first estimate, showing that the main result is true for sufficiently large dimensions, as an application of the stability result \cite{DEFFL22}. Section~\ref{sec5} then completely proves it for any dimensions. The proof relies on an improved Aubin--Sobolev inequality, Theorem~\ref{thm_a_n0}, which is proved in Part~\ref{part2}. In Section~\ref{appendix_Clifford} as an appendix, we record the explicit Clifford structure of $\R^4$, which is needed in the proof of $n=4$ case.
Part 2 is devoted to the proof of Theorem \ref{thm_a_n0} and could be viewed as an independent work.

\part{Another sharp criterion for zero modes}\label{part1}

\section{Preliminaries}\label{sec2}

On the round sphere $\S^n$, we study spinors $\varphi$ solving the (magnetic) zero mode equation
\eq{\label{eq0:zero-mode}
    \D\varphi = iA\cdot\varphi,
}
where $\D$ is the Dirac operator, $A$ is a real vector field, and ``$\cdot$'' denotes Clifford multiplication.

It is convenient to absorb the potential into a unitary connection on the spinor bundle by setting
\eq{
    \nabla^A_X\psi \coloneqq \nabla_X\psi - i\<A,X\>\psi.
}
With respect to any local orthonormal frame $\{e_j\}$, the associated Dirac operator is
\eq{\label{eq:D^A}
    \D^A\psi \coloneqq e_j\cdot\nabla^A_{e_j}\psi = \D\psi - iA\cdot\psi.
}
Thus \eqref{eq0:zero-mode} is equivalent to
\eq{\label{eq:zero-mode}
    \D^A\varphi = 0.
}

Writing $f\coloneqq\abs{\varphi}$ and $\varphi=f\phi$ with $\abs{\phi}=1$ on $\{f>0\}$, we obtain from \eqref{eq:zero-mode} the identity
\eq{\label{eq:1}
    \D^A\phi = -\nabla(\log f)\cdot\phi.
}

We next record a commutator identity for $\nabla^A$.

\begin{lemma}\label{lem0.1}
At a given point, let $\{e_j\}$ be an orthonormal basis with $\nabla e_j=0$. For any spinor $\psi$ we have
\eq{
    \label{eq:lem2.1}[\nabla^A_{e_j},\nabla^A_{e_k}]\psi = -\frac{1}{2}e_j\cdot e_k\cdot\psi - i\rd A^\flat(e_j,e_k)\psi, \qquad j\neq k.
}
\end{lemma}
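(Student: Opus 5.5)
The plan is a direct computation at the given point. We split $[\nabla^A_{e_j},\nabla^A_{e_k}]$ into the Levi-Civita spinorial curvature term and a $U(1)$ term coming from the potential, and evaluate each separately. The hypothesis $\nabla e_j=0$ at the point gives $[e_j,e_k]=\nabla_{e_j}e_k-\nabla_{e_k}e_j=0$ there. Hence the commutator $[\nabla^A_{e_j},\nabla^A_{e_k}]$ coincides with the curvature $R^{\nabla^A}(e_j,e_k)$, and no torsion-type correction term $\nabla^A_{[e_j,e_k]}$ has to be subtracted.

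First, expand with $\nabla^A_X=\nabla_X-i\<A,X\>$ and write $A_k\coloneqq\<A,e_k\>$:
\eq{
\nabla^A_{e_j}\nabla^A_{e_k}\psi=\nabla_{e_j}\nabla_{e_k}\psi-i\,e_j(A_k)\psi-iA_k\nabla_{e_j}\psi-iA_j\nabla_{e_k}\psi-A_jA_k\psi.
}
Antisymmetrizing in $j,k$, the terms $-iA_k\nabla_{e_j}\psi-iA_j\nabla_{e_k}\psi$ and $-A_jA_k\psi$ are symmetric, so they cancel. What remains is
\eq{
[\nabla^A_{e_j},\nabla^A_{e_k}]\psi=[\nabla_{e_j},\nabla_{e_k}]\psi-i\big(e_j(A_k)-e_k(A_j)\big)\psi .
}
Since $\nabla e_j=0$ at the point, $e_j(A_k)-e_k(A_j)=\rd A^\flat(e_j,e_k)$ there, which produces the second term of \eqref{eq:lem2.1}.

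Second, compute the spinorial curvature term. We use the standard formula $R^S(X,Y)\psi=\frac14\sum_{a,b}\<R(X,Y)e_a,e_b\>e_a\cdot e_b\cdot\psi$, with the same sign convention for $R$ as for $R^S$. On the round $\S^n$ we have $R(X,Y)Z=\<Y,Z\>X-\<X,Z\>Y$, so
\eq{
\<R(e_j,e_k)e_a,e_b\>=\delta_{ka}\delta_{jb}-\delta_{ja}\delta_{kb}.
}
Substituting gives $\frac14\big(e_k\cdot e_j-e_j\cdot e_k\big)\cdot\psi$. For $j\ne k$ the Clifford relation $e_k\cdot e_j=-e_j\cdot e_k$ reduces this to $-\frac12 e_j\cdot e_k\cdot\psi$, which is the first term of \eqref{eq:lem2.1}.

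The only real obstacle is bookkeeping of conventions. The sign of the spinorial curvature formula must match both the curvature convention $R(X,Y)=[\nabla_X,\nabla_Y]-\nabla_{[X,Y]}$ and the Clifford convention $X\cdot X=-\abs{X}^2$ used in the paper. I would fix this by a sanity check rather than by trusting a formula from memory. Take a Killing spinor with $\nabla_X\psi=\frac12 X\cdot\psi$ and compute $[\nabla_{e_j},\nabla_{e_k}]\psi$ directly:
\eq{
\nabla_{e_j}\big(\tfrac12 e_k\cdot\psi\big)=\tfrac14 e_k\cdot e_j\cdot\psi .
}
Antisymmetrizing gives $\frac14(e_k\cdot e_j-e_j\cdot e_k)\cdot\psi=-\frac12 e_j\cdot e_k\cdot\psi$, which confirms the sign in \eqref{eq:lem2.1}.
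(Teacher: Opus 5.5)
Your computation is correct, and it is the standard argument that the paper delegates to Lawson--Michelsohn: the symmetric potential terms cancel and $[e_j,e_k]=0$ at the point, which leaves $-i\,\rd A^\flat(e_j,e_k)\psi$, while the spinorial curvature $\frac14\sum_{a,b}\<R(e_j,e_k)e_a,e_b\>e_a\cdot e_b\cdot\psi$ of the round metric gives $-\frac12 e_j\cdot e_k\cdot\psi$. Your Killing-spinor check legitimately fixes the sign convention: $e_j\cdot e_k$ is invertible, so testing on a single nonzero spinor determines the coefficient.
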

\begin{proof} For the proof, see \cite{Lawson}.
\end{proof}

We will also use the Schr\"odinger--Lichnerowicz identity for the twisted Dirac operator $\D^A$. Here we include a short proof for completeness.
\eqref{eq:S-L} indicates why one may obtain an estimate on $\rd A^\flat$ in terms of the Sobolev constant $S_n$.

\begin{lemma}
For any spinor $\psi$ we have
\eq{\label{eq:S-L}
    (\D^A)^2\psi = (\nabla^A)^*\nabla^A\psi + \frac{n(n-1)}{4}\psi - i\,\rd A^\flat\cdot\psi.
}
\end{lemma}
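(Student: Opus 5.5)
The plan is a direct local computation in a frame at a point, and the Lemma above carries the key input. Fix a point and an orthonormal frame $\{e_j\}$ with $\nabla e_j=0$ there. Write
\[
(\D^A)^2\psi=e_j\cdot\nabla^A_{e_j}\big(e_k\cdot\nabla^A_{e_k}\psi\big).
\]
Clifford multiplication by the parallel (at the point) vector $e_k$ commutes with $\nabla^A_{e_j}$, because $\nabla$ is compatible with Clifford multiplication and the $-i\<A,e_j\>$ term is scalar. This gives
\[
(\D^A)^2\psi=\sum_{j,k}e_j\cdot e_k\cdot\nabla^A_{e_j}\nabla^A_{e_k}\psi.
\]

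Next I split the double sum into $j=k$ and $j\neq k$.

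\textbf{Diagonal terms.} Since $e_j\cdot e_j=-1$, these give $-\sum_j\nabla^A_{e_j}\nabla^A_{e_j}\psi$. At the chosen point this equals $(\nabla^A)^*\nabla^A\psi$, because $\nabla_{e_j}e_j=0$ there and $\nabla^A$ is a metric (unitary) connection, so its formal adjoint has the usual form.

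\textbf{Off-diagonal terms.} Using the antisymmetry $e_j\cdot e_k=-e_k\cdot e_j$ for $j\ne k$, these combine into
\[
\sum_{j<k}e_j\cdot e_k\cdot[\nabla^A_{e_j},\nabla^A_{e_k}]\psi .
\]
Lemma \ref{lem0.1} handles this with its two pieces.

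\textbf{Scalar curvature piece.} The first piece is
\[
-\tfrac12\sum_{j<k}e_j\cdot e_k\cdot e_j\cdot e_k\cdot\psi=-\tfrac12\sum_{j<k}(-1)\psi.
\]
Here I used $e_je_ke_je_k=-e_je_je_ke_k=-1$. The sum has $\binom n2$ terms, so this gives $\frac{n(n-1)}{4}\psi$, matching $R/4$ with $R=n(n-1)$.

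\textbf{Magnetic piece.} The second piece is
\[
-i\sum_{j<k}\rd A^\flat(e_j,e_k)\,e_j\cdot e_k\cdot\psi .
\]
With the standard convention that a 2-form $\omega=\sum_{j<k}\omega_{jk}e^j\wedge e^k$ acts by Clifford multiplication as $\sum_{j<k}\omega_{jk}e_j\cdot e_k$, this is exactly $-i\,\rd A^\flat\cdot\psi$.

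The main obstacle is bookkeeping of signs and conventions: the sign in Lemma \ref{lem0.1}, the Clifford relation $e_ie_j+e_je_i=-2\delta_{ij}$, and the normalization of the Clifford action of 2-forms. All three must be consistent so that the constants come out as $+\frac{n(n-1)}4$ and $-i\,\rd A^\flat$. I will check these against the case $A=0$, where the identity must reduce to the classical Lichnerowicz formula $\D^2=\nabla^*\nabla+R/4$. Since both sides are tensorial, the computation at an arbitrary point in a synchronous frame proves the identity globally.
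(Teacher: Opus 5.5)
Your proposal is correct and follows the paper's proof essentially step for step. Both expand $(\D^A)^2\psi$ in a frame with $\nabla e_j=0$ at the point, split into the diagonal part $(\nabla^A)^*\nabla^A\psi$ and the off-diagonal part $\sum_{j<k}e_j\cdot e_k\cdot[\nabla^A_{e_j},\nabla^A_{e_k}]\psi$, then insert Lemma~\ref{lem0.1} using $e_j\cdot e_k\cdot e_j\cdot e_k=-1$ for $j\neq k$, which yields $\frac{n(n-1)}{4}\psi-i\,\rd A^\flat\cdot\psi$ with the signs you computed.
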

\begin{proof}
By definition \eqref{eq:D^A} we have
\eq{
    (\D^A)^2\psi &= e_j\cdot\nabla^A_{e_j}(e_k\cdot\nabla^A_{e_k}\psi) = e_j\cdot e_k\cdot\nabla^A_{e_j}\nabla^A_{e_k}\psi \\
    &= -\nabla^A_{e_j}\nabla^A_{e_j}\psi + \sum_{j\neq k}e_j\cdot e_k\cdot\nabla^A_{e_j}\nabla^A_{e_k}\psi \\
    &= (\nabla^A)^*\nabla^A\psi + \sum_{j<k}e_j\cdot e_k\cdot [\nabla^A_{e_j},\nabla^A_{e_k}]\psi.
}
Combining with Lemma~\ref{lem0.1} gives
\eq{
    (\D^A)^2\psi &= (\nabla^A)^*\nabla^A\psi - \frac{1}{2}\sum_{j<k}e_j\cdot e_k\cdot e_j\cdot e_k\cdot\psi - i\sum_{j<k}e_j\cdot e_k\cdot \rd A^\flat(e_j,e_k)\psi \\
    &= (\nabla^A)^*\nabla^A\psi + \frac{n(n-1)}{4}\psi - i\,\rd A^\flat\cdot\psi.
}
The identity follows.
\end{proof}

Let $\tilde g=e^{2\sigma}g$ be a conformal change of metric on $\S^n$. The spinor bundles $(\Sigma_g,\nabla)$ and $(\Sigma_{\tilde g},\tilde\nabla)$ are canonically identified by an isometry $F_\sigma:\Sigma_g\to\Sigma_{\tilde g}$, under which one has
\eq{
    \tilde{\nabla}_X(F_\sigma\psi) = F_\sigma\Big(\nabla_X\psi - \frac{1}{2}X\cdot\nabla\sigma\cdot\psi - \frac{1}{2}X(\sigma)\psi\Big),
}
and the conformal covariance of the Dirac operator
\eq{
    \tilde{\D}(e^{-\frac{n-1}{2}\sigma}F_\sigma\psi) = e^{-\frac{n+1}{2}\sigma}F_\sigma\D\psi.
}

Accordingly, if we define
\eq{
    \tilde{\varphi}\coloneqq e^{-\frac{n-1}{2}\sigma}F_\sigma\varphi, \qquad \tilde{A}\coloneqq e^{-2\sigma}A,
}
then
\eq{
    \tilde{A}^\flat=A^\flat, \qquad \abs{\rd A^{\flat}}_{\tilde g}=e^{-2\sigma}\abs{\rd A^{\flat}}_{g},
}
and
\eq{
    \tilde{f}=\abs{\tilde{\varphi}}_{\tilde{g}}=e^{-\frac{n-1}{2}\sigma}f, \qquad \tilde{\phi}=F_\sigma\phi.
}
In particular,
\eq{
    \D\varphi = iA\cdot_g\varphi \implies \tilde{\D}\tilde{\varphi} = i\tilde{A}\cdot_{\tilde{g}}\tilde{\varphi},
}
so $(\varphi,A,g)$ is a zero mode if and only if $(\tilde{\varphi},\tilde{A},\tilde{g})$ is.
Moreover, the natural conformal norms of the potential and curvature are invariant:
\eq{
    \norm{\tilde{A}}_{L^n(\tilde{g})} = \norm{A}_{L^n(g)}, \qquad \norm{\rd \tilde{A}^\flat}_{L^{\frac{n}{2}}(\tilde{g})} = \norm{\rd A^\flat}_{L^{\frac{n}{2}}(g)}.
}

Finally, \eqref{eq:zero-mode} is gauge invariant: for any real-valued function $h$,
\eq{
    (A,\varphi)\longmapsto (A+\nabla h,\,e^{ih}\varphi)
}
preserves solutions.

\section{An orthogonal decomposition and an associated 2-form}\label{sec3}

In this section we first decompose the covariant derivative $\nabla^A\varphi$ into orthogonal pieces adapted to the splitting $\varphi=f\phi$ from Section~2. Concretely, we regard $\nabla^A\varphi$ as a $T^*\S^n$-valued spinor, i.e. the field $X\mapsto \nabla^A_X\varphi$, and separate the contributions coming from $\nabla f$, from the ``model part'' forced by \eqref{eq:1}, and from a remainder term that measures the deviation from this model. An analogous decomposition for differential forms was used in \cite{WZ26curl}. Then we introduce an associated 2-form $\beta$ and establish the identities needed to estimate the magnetic field.

\subsection{Decomposition}
In this subsection we decompose $ \nabla ^A \varphi$ into orthogonal components.

We first decompose $\nabla^A \phi$ by
defining 
\eq{\label{eq:138}
    T_X\coloneqq \frac{1}{n-1}\Big( X\cdot \nabla(\log f)\cdot\phi + X(\log f)\phi \Big), \quad S_X\coloneqq \nabla^A_X\phi - T_X.
}
Then, for any vector field $X$, we decompose $\nabla \varphi$
\eq{
    \nabla^A_X\varphi = X(f)\phi + fT_X + fS_X \eqcolon P_X + Q_X + R_X.
}
It is clear that this decomposition is different from the usual decomposition in spin geometry $\nabla_X \varphi =(\nabla _X\varphi+\frac 1n X\cdot \D \ \varphi) -\frac 1n  X\cdot \D \varphi$, so that the first part, which is the so-called twistor operator,  is conformally invariant. 

\begin{remark}\label{rmk3.1}
Let $\tilde{g}=e^{2\sigma}g$. The discussion in Section~\ref{sec2} implies that 
\eq{
    \tilde{T}_{\tilde{X}} = e^{-\sigma}F_\sigma\Big( T_X-\frac{1}{2}X\cdot\nabla\sigma\cdot\phi-\frac{1}{2}X(\sigma)\phi \Big),
}
and
\eq{
    \tilde{S}_{\tilde{X}} = e^{-\sigma}F_\sigma(S_X),
}
where $\tilde{X}=e^{-\sigma}X$. Hence the remainder term $S$ is conformally covariant.
\end{remark}

\begin{lemma}\label{lem1.1}
$P,Q,R$ are pairwise orthogonal.
\end{lemma}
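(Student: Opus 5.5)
The inner product on $T^*\S^n$-valued spinors is the pointwise one, $\<P,Q\>=\sum_j\rRe\<P_{e_j},Q_{e_j}\>$ for a local orthonormal frame $\{e_j\}$. We work on $\{f>0\}$, where $\phi$ is defined, and write $w\coloneqq\nabla(\log f)$. The plan is to check the three orthogonality relations separately by direct pointwise computation. We use only three facts:
\begin{itemize}
\item[(a)] $\rRe\<X\cdot\psi_1,\psi_2\>=-\rRe\<\psi_1,X\cdot\psi_2\>$ (Clifford multiplication by a vector is skew);
\item[(b)] $\rRe\<\phi,X\cdot Y\cdot\phi\>=-\<X,Y\>\abs{\phi}^2$, since $X\cdot Y=-\<X,Y\>+X\wedge Y$ and the $2$-form part acts skew-adjointly, so its contribution is purely imaginary;
\item[(c)] $\rRe\<\phi,\nabla^A_X\phi\>=\tfrac12X(\abs{\phi}^2)=0$, because $\nabla^A$ is unitary (the $-i\<A,X\>$ term only contributes to the imaginary part) and $\abs{\phi}=1$.
\end{itemize}

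\emph{$P\perp Q$.} We have $\sum_j\rRe\<e_j(f)\phi,\,fT_{e_j}\>=\frac{f}{n-1}\rRe\<\phi,\nabla f\cdot w\cdot\phi+w(f)\phi\>$. By (b) this equals $\frac{f}{n-1}\big(-\<\nabla f,w\>+\<\nabla f,w\>\big)=0$.

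\emph{$P\perp R$.} Here $\sum_j\rRe\<e_j(f)\phi,fS_{e_j}\>=f\,\rRe\<\phi,S_{\nabla f}\>$. By (c), $\rRe\<\phi,\nabla^A_X\phi\>=0$. By (b), $\rRe\<\phi,T_X\>=\frac1{n-1}\big(-\<X,w\>+X(\log f)\big)=0$. Hence $\rRe\<\phi,S_X\>=0$ for every $X$, and the sum vanishes.

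\emph{$Q\perp R$.} This is the only step that uses the equation, namely \eqref{eq:1} in the form $\D^A\phi=-w\cdot\phi$. It reduces to the identity $\sum_j\rRe\<T_{e_j},\nabla^A_{e_j}\phi\>=\sum_j\abs{T_{e_j}}^2$, and we compute both sides.

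For the left side, write $T_{e_j}=\frac1{n-1}\big(e_j\cdot w\cdot\phi+w_j\phi\big)$. The $w_j\phi$ part gives $\frac1{n-1}\rRe\<\phi,\nabla^A_w\phi\>=0$ by (c). For the other part, (a) moves $e_j$ across:
\[
\sum_j\rRe\<e_j\cdot w\cdot\phi,\nabla^A_{e_j}\phi\>=-\rRe\<w\cdot\phi,\D^A\phi\>=\rRe\<w\cdot\phi,w\cdot\phi\>=\abs{w}^2 .
\]
So the left side equals $\frac{\abs{w}^2}{n-1}$.

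For the right side, expand the square:
\[
(n-1)^2\sum_j\abs{T_{e_j}}^2=\sum_j\abs{e_j\cdot w\cdot\phi}^2+\abs{w}^2+2\sum_jw_j\,\rRe\<e_j\cdot w\cdot\phi,\phi\>.
\]
The first sum is $n\abs{w}^2$. By (b) the cross term is $-2\abs{w}^2$. Altogether this gives $(n-1)\abs{w}^2$, so the right side is also $\frac{\abs{w}^2}{n-1}$.

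The two sides agree, hence $\<Q,R\>=f^2\sum_j\rRe\<T_{e_j},S_{e_j}\>=0$. On $\{f=0\}$ we set all three pieces to zero, so the orthogonality is trivial there.

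The main point needing care is the $Q\perp R$ computation: the equation \eqref{eq:1} must be used exactly once, and the factor $\frac1{n-1}$ in \eqref{eq:138} is precisely what makes the two sides match. The other two relations are pure algebra.
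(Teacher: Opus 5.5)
Your proof is correct and follows the paper's argument essentially step by step. You get $P\perp Q$ from $\rRe\<\phi,X\cdot Y\cdot\phi\>=-\<X,Y\>$, $P\perp R$ from $\rRe\<\phi,\nabla^A_X\phi\>=0$, and $Q\perp R$ by using \eqref{eq:1} to show $\rRe\<\nabla^A\phi,T\>=\abs{T}^2=\frac{1}{n-1}\abs{\nabla(\log f)}^2$; the only cosmetic difference is that for $P\perp R$ you prove the slightly stronger fact $\rRe\<\phi,S_X\>=0$ for every $X$, whereas the paper pairs with $P$ directly and reuses the $P\perp T$ computation.
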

\begin{proof}
First,
\eq{
    \rRe\<P,T\> &= \frac{e_i(f)}{n-1}\<e_i\cdot\rd(\log f)\cdot\phi+e_i(\log f)\phi, \,\phi\> \\
    &= \frac{1}{n-1}\Big( \<\rd(\log f),\rd f\> - \<\rd(\log f),\rd f\> \Big) = 0,
}
hence
\eq{
    \rRe\<P,Q\> = f\rRe\<P,T\> = 0.
}
Second,
\eq{
    \rRe\<P,R\> = f\rRe\<P,S\> = f\rRe\<P,\nabla^A\phi-T\> = f\rRe\<P,\nabla^A\phi\> = fe_i(f)\rRe\<\phi, \nabla^A_{e_i}\phi \> = 0.
}
Third, using \eqref{eq:1} gives
\eq{\label{eq:2}
    \rRe\<\nabla^A\phi,T\> &= \frac{1}{n-1}\rRe\<\nabla^A_{e_i}\phi, \, e_i\cdot\rd(\log f)\cdot\phi+e_i(\log f)\phi\> \\
    &= -\frac{1}{n-1}\rRe\<\D^A\phi, \,\rd(\log f)\cdot\phi\> = \frac{1}{n-1}\abs{\nabla(\log f)}^2.
}
Moreover,
\eq{\label{eq:3}
     \rRe\<T,T\> &= \abs{T}^2 = \frac{1}{(n-1)^2} \Big( n\abs{\nabla(\log f)}^2 + \abs{\nabla(\log f)}^2 + 2\<e_i\cdot\rd(\log f)\cdot\phi,\, e_i(\log f)\phi\> \Big) \\
     &= \frac{1}{(n-1)^2} \Big( n\abs{\nabla(\log f)}^2 + \abs{\nabla(\log f)}^2 - 2\abs{\nabla(\log f)}^2 \Big) \\
     &= \frac{1}{n-1}\abs{\nabla(\log f)}^2.
}
Combining \eqref{eq:2} and \eqref{eq:3} yields
\eq{
    \rRe\<Q,R\> = f^2\rRe\<T,S\> = f^2\rRe\<T, \,\nabla^A\phi-T\> = 0.
}
Hence we complete the proof.
\end{proof}

\begin{lemma}\label{lem1.2}
We have
\eq{
    \abs{P}^2 = \abs{\nabla f}^2, \quad \abs{Q}^2 = \frac{1}{n-1}\abs{\nabla f}^2, \quad \abs{R}^2 = f^2\abs{S}^2.
}
\end{lemma}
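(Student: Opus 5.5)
The three identities follow directly from the definitions $P_X=X(f)\phi$, $Q_X=fT_X$ and $R_X=fS_X$, together with the pointwise computation \eqref{eq:3} already carried out in the proof of Lemma~\ref{lem1.1}. Throughout we regard $P,Q,R$ as $T^*\S^n$-valued spinors, so that for instance $\abs{P}^2=\sum_i\abs{P_{e_i}}^2$ in a local orthonormal frame $\{e_i\}$. We work on $\{f>0\}$, where $\phi$ is defined and $\abs{\phi}=1$.

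For $P$ we compute
\eq{
\abs{P}^2=\sum_i \abs{e_i(f)}^2\abs{\phi}^2=\abs{\nabla f}^2 ,
}
using $\abs{\phi}=1$.

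For $Q$ we have $\abs{Q}^2=f^2\abs{T}^2$. By \eqref{eq:3}, $\abs{T}^2=\frac{1}{n-1}\abs{\nabla\log f}^2$. Since $f\nabla\log f=\nabla f$, this gives
\eq{
\abs{Q}^2=\frac{1}{n-1}\,f^2\abs{\nabla\log f}^2=\frac{1}{n-1}\abs{\nabla f}^2 .
}
The one point worth rechecking in \eqref{eq:3} is the cross term. Writing $\nabla\log f=a_je_j$, we have $\sum_i\rRe\<e_i\cdot\rd\log f\cdot\phi,\,e_i(\log f)\phi\>=\rRe\<\rd\log f\cdot\rd\log f\cdot\phi,\phi\>=-\abs{\nabla\log f}^2$, because $v\cdot v=-\abs{v}^2$. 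The first term is $\sum_i\abs{e_i\cdot\rd\log f\cdot\phi}^2=n\abs{\nabla\log f}^2$, since Clifford multiplication by a unit vector is an isometry. Adding the three contributions gives $(n+1-2)\abs{\nabla\log f}^2$, and dividing by $(n-1)^2$ yields $\frac{1}{n-1}\abs{\nabla\log f}^2$.

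For $R$, the identity $\abs{R}^2=f^2\abs{S}^2$ is immediate because $f$ is a real scalar. There is no real obstacle in this lemma; the only care needed is the sign of the cross term in $\abs{T}^2$, which was just verified. On $\{f=0\}$ one interprets the identities almost everywhere, or via the usual regularization $f_\varepsilon=\sqrt{f^2+\varepsilon^2}$, which is not needed for these pointwise formulas.
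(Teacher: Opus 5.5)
Your proposal is correct and matches the paper's proof. The first and third identities follow immediately from the definitions and $\abs{\phi}=1$. The second follows from $\abs{Q}^2=f^2\abs{T}^2$, the formula $\abs{T}^2=\frac{1}{n-1}\abs{\nabla(\log f)}^2$ (computed in the proof of Lemma~\ref{lem1.1}), and $f\nabla(\log f)=\nabla f$; your recheck of the cross term there is a correct verification of that computation.
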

\begin{proof}
The first and the third identities are trivial. For the second, we have by \eqref{eq:3}
\eq{
    \abs{Q}^2 = f^2\abs{T}^2 = \frac{1}{n-1}f^2\abs{\nabla(\log f)}^2 = \frac{1}{n-1}\abs{\nabla f}^2.
}
Hence we complete the proof.
\end{proof}

As a consequence of Lemma~\ref{lem1.1} and Lemma~\ref{lem1.2}, we have the following identity.
\begin{corollary}
We have
\eq{\label{eq:6}
    \abs{\nabla^A\varphi}^2 = \frac{n}{n-1}\abs{\nabla f}^2 + f^2\abs{S}^2.
}
\end{corollary}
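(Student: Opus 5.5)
The identity \eqref{eq:6} is a direct consequence of the decomposition $\nabla^A\varphi = P+Q+R$ together with Lemma~\ref{lem1.1} and Lemma~\ref{lem1.2}, so the plan is just to expand the squared norm and drop the cross terms. Fix a point in $\{f>0\}$ and a local orthonormal frame $\{e_j\}$. We regard $\nabla^A\varphi$ as the $T^*\S^n$-valued spinor $X\mapsto\nabla^A_X\varphi$, so that
\eq{
    \abs{\nabla^A\varphi}^2=\sum_j\abs{\nabla^A_{e_j}\varphi}^2=\sum_j\abs{P_{e_j}+Q_{e_j}+R_{e_j}}^2 .
}
Expanding gives $\abs{P}^2+\abs{Q}^2+\abs{R}^2+2\rRe\big(\<P,Q\>+\<P,R\>+\<Q,R\>\big)$, where each inner product is summed over $j$. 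This is exactly the pairing used in Lemma~\ref{lem1.1}.

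By Lemma~\ref{lem1.1} the three mixed real parts vanish. By Lemma~\ref{lem1.2} we have $\abs{P}^2+\abs{Q}^2=\big(1+\frac{1}{n-1}\big)\abs{\nabla f}^2=\frac{n}{n-1}\abs{\nabla f}^2$ and $\abs{R}^2=f^2\abs{S}^2$. Summing these gives \eqref{eq:6} on $\{f>0\}$.

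The only point needing care, which I expect to be the sole (minor) obstacle, is the zero set $\{f=0\}$, where $\phi$, $T$ and $S$ are undefined. There I would interpret $f^2\abs{S}^2$ as $\abs{R}^2=\abs{\nabla^A\varphi-P-Q}^2$. Since $P=\rd f\otimes\phi$ and $Q=\frac{1}{n-1}\big(X\cdot\nabla f\cdot\phi+X(f)\phi\big)$ are bounded by $C\abs{\nabla f}$, the identity extends a.e. Alternatively, one can note that $\nabla f=0$ a.e. on $\{f=0\}$ and $\nabla^A\varphi=0$ a.e. there by the standard Sobolev fact, so both sides vanish a.e. on that set and the identity holds as an a.e. statement.
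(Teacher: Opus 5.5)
Your proposal is correct and matches the paper's argument: the paper states the corollary as an immediate consequence of the pairwise orthogonality in Lemma~\ref{lem1.1} and the norm identities in Lemma~\ref{lem1.2}, with the pairings summed over the frame, exactly as you do. Your treatment of $\{f=0\}$, using $\nabla f=0$ and $\nabla^A\varphi=0$ a.e.\ there, also agrees with how the paper handles the zero set in Subsection~\ref{sec6.5}.
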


\begin{lemma}\label{lem1.4}
We have
\begin{enumerate}
    \item $e_j\cdot P_{e_j}=\nabla f\cdot\phi$;
    \item $e_j\cdot Q_{e_j}=-\nabla f\cdot\phi$;
    \item $e_j\cdot R_{e_j}=0$.
\end{enumerate}
\end{lemma}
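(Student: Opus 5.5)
The plan is a direct computation in a local orthonormal frame $\{e_j\}$, using only the Clifford relations $e_j\cdot e_j=-1$ and $e_j\cdot X\cdot e_j = (n-2)X$ for a vector $X$. The latter follows from $e_j\cdot X = -X\cdot e_j - 2\<e_j,X\>$, summed over $j$. We also use the definitions of $P,Q,R$ together with equation \eqref{eq:1}.

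For (1), we have $e_j\cdot P_{e_j} = e_j(f)\, e_j\cdot\phi$. Since $\sum_j e_j(f)e_j=\nabla f$, this equals $\nabla f\cdot\phi$.

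For (2), we write
\eq{
e_j\cdot Q_{e_j} = \frac{f}{n-1}\Big(\sum_j e_j\cdot e_j\cdot\nabla(\log f)\cdot\phi + \sum_j e_j(\log f)\,e_j\cdot\phi\Big).
}
The first sum is $-n\,\nabla(\log f)\cdot\phi$ and the second is $\nabla(\log f)\cdot\phi$. Hence
\eq{
e_j\cdot Q_{e_j} = \frac{f}{n-1}(-n+1)\nabla(\log f)\cdot\phi = -f\nabla(\log f)\cdot\phi = -\nabla f\cdot\phi.
}
This computation only uses $e_j\cdot e_j=-1$, not the identity for $e_j\cdot X\cdot e_j$.

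For (3), we compute
\eq{
e_j\cdot R_{e_j} = f\big(e_j\cdot\nabla^A_{e_j}\phi - e_j\cdot T_{e_j}\big) = f\big(\D^A\phi - e_j\cdot T_{e_j}\big).
}
From the computation in (2), $e_j\cdot T_{e_j} = -\nabla(\log f)\cdot\phi$. By \eqref{eq:1}, $\D^A\phi = -\nabla(\log f)\cdot\phi$ as well, so the difference vanishes.

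Alternatively, (3) follows by summing (1)–(3): the total is $\D^A\varphi=0$ by \eqref{eq:zero-mode}. This is consistent, since (1) and (2) cancel.

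The only delicate point is working on $\{f>0\}$, where $\phi$ and $\log f$ are defined. All identities are pointwise there, and no step presents a real obstacle.
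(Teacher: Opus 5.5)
Your proposal is correct and follows the paper's proof. Parts (1) and (2) are the same direct Clifford computations using $e_j\cdot e_j=-1$. For (3), the paper uses exactly your ``alternative'' argument, $e_j\cdot R_{e_j}=\D^A\varphi-e_j\cdot P_{e_j}-e_j\cdot Q_{e_j}=0$; this is equivalent to your primary route via \eqref{eq:1}, since that identity is just $\D^A\varphi=0$ rewritten for $\varphi=f\phi$.
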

\begin{proof}
First,
\eq{
    e_j\cdot P_{e_j}=e_j\cdot e_j(f)\phi = \nabla f\cdot\phi.
}
Second,
\eq{
    e_j\cdot Q_{e_j} = \frac{1}{n-1}e_j\cdot\Big(e_j\cdot\nabla f\cdot\phi + e_j(f)\phi\Big) = -\nabla f\cdot\phi.
}
Third, since $\D^A\varphi=0$,
\eq{
    e_j\cdot R_{e_j}=\D^A\varphi - e_j\cdot P_{e_j} - e_j\cdot Q_{e_j} = 0.
}
Hence the claim follows.
\end{proof}

We next estimate the scalar components $\rRe\<iR_X,\phi\>$ in terms of $\abs{R}$. This is done by projecting onto the one-dimensional span of $\phi$ and the orthogonal complement of the Clifford-trace constraint from Lemma~\ref{lem1.4}(3).

\begin{lemma}\label{lem1.5}
We have
\eq{
    \sum_j \rRe\<iR_{e_j},\phi\>^2 \leq \frac{n-1}{n}\abs{R}^2.
}
\end{lemma}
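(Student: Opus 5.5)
We view $R$ pointwise as the element $(R_{e_1},\dots,R_{e_n})$ of $V\coloneqq\Sigma_x^{\oplus n}$, a real inner product space with $\rRe\<\cdot,\cdot\>$. Lemma~\ref{lem1.4}(3) places $R$ in the real subspace $W\coloneqq\{(\psi_j):\ e_j\cdot\psi_j=0\}$. The quantity on the left is $\sum_j\rRe\<R_{e_j},-i\phi\>^2$ (since $\rRe\<iR_{e_j},\phi\>=\rRe\<R_{e_j},-i\phi\>$), which is the squared norm of the orthogonal projection of $R$ onto the $n$-dimensional real subspace
\[
U\coloneqq\{(a_1 i\phi,\dots,a_n i\phi):\ a\in\R^n\},
\]
because the vectors $(0,\dots,i\phi,\dots,0)$ are orthonormal ($\abs{\phi}=1$). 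So the plan is to show that for every $R\in W$,
\[
\abs{\mathrm{proj}_U R}^2\le\tfrac{n-1}{n}\abs{R}^2 .
\]

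The key step is to compute the angle between $U$ and $W$. Its orthogonal complement $W^\perp$ is the image of the adjoint of the map $\Phi:(\psi_j)\mapsto e_j\cdot\psi_j$. Since Clifford multiplication by a unit vector is skew-adjoint, $\Phi^*\eta=(-e_1\cdot\eta,\dots,-e_n\cdot\eta)$, so
\[
W^\perp=\{(e_j\cdot\eta)_j:\ \eta\in\Sigma_x\}.
\]
Take $u=(a_j i\phi)\in U$ with $\abs{a}=1$. Its projection onto $W^\perp$ is obtained via $\Phi\Phi^*=n\,\rid$ (since $\sum_j (-e_j\cdot e_j)=n$). Concretely, $\Phi u=i\,a\cdot\phi$, and the projection is $\Phi^*(\Phi\Phi^*)^{-1}\Phi u$, whose squared norm is
\[
\tfrac1n\abs{\Phi u}^2=\tfrac1n\abs{a\cdot\phi}^2=\tfrac1n .
\]
Hence every unit $u\in U$ satisfies $\abs{\mathrm{proj}_W u}^2=1-\frac1n$.

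From this we conclude as follows. For $R\in W$, choose the unit vector $u\in U$ in the direction of $\mathrm{proj}_U R$ (the case $\mathrm{proj}_U R=0$ is trivial). Then, using $R\in W$,
\[
\abs{\mathrm{proj}_U R}=\rRe\<R,u\>=\rRe\<R,\mathrm{proj}_W u\>\le\abs{R}\,\abs{\mathrm{proj}_W u}=\sqrt{\tfrac{n-1}{n}}\,\abs{R},
\]
and squaring gives the lemma.

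The only delicate points are the sign and real-structure bookkeeping: skew-adjointness of Clifford multiplication for the real inner product $\rRe\<\cdot,\cdot\>$, and the requirement that $i\phi$ be unit and pointwise defined on $\{f>0\}$. On $\{f=0\}$ we have $R=fS=0$, so the inequality holds there trivially (with $\phi$ chosen arbitrarily). I expect no real obstacle beyond verifying $\Phi\Phi^*=n\,\rid$.
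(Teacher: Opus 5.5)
Your proof is correct and is essentially the paper's argument written in coordinate-free form. The paper's test vector $\big(\<X,e_j\>\phi+\frac{1}{n}e_j\cdot X\cdot\phi\big)_j$ is exactly the orthogonal projection onto $W$ of $(\<X,e_j\>\phi)_j$; the paper pairs it with $iR$, whereas you pair $R$ with $i\phi$, which is equivalent. Its squared norm $\frac{n-1}{n}\abs{X}^2$ is your $\abs{\mathrm{proj}_W u}^2=1-\frac{1}{n}$. Finally, the paper's choice $X=\rRe\<iR_{e_j},\phi\>e_j$ followed by Cauchy--Schwarz corresponds to your choice of $u$ along $\mathrm{proj}_U R$.
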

\begin{proof}
Using Lemma~\ref{lem1.4}(3), we see that
\eq{
    \rRe\<iR_{e_j},e_j\cdot X\cdot\phi\>=0
}
for any vector field $X$. Hence
\eq{\label{eq:127}
    \rRe\<iR_X,\phi\> = \rRe\<iR_{e_j},\<X,e_j\>\phi+\frac{1}{n}e_j\cdot X\cdot\phi\>.
}
Note that
\eq{\label{eq:128}
    \sum_j \abs{\<X,e_j\>\phi+\frac{1}{n}e_j\cdot X\cdot\phi}^2 = \abs{X}^2 + \frac{1}{n}\abs{X}^2 + \frac{2}{n}\<X,e_j\>\rRe\<\phi,e_j\cdot X\cdot\phi\> = \frac{n-1}{n}\abs{X}^2.
}
Choosing $X=\rRe\<iR_{e_j},\phi\>e_j$ in \eqref{eq:127}, and using \eqref{eq:128} and Cauchy--Schwarz yield
\eq{\label{eq:128.1}
    \sum_j \rRe\<iR_{e_j},\phi\>^2 &= \rRe\<iR_X,\phi\> = \rRe\<iR_{e_j},\<X,e_j\>\phi+\frac{1}{n}e_j\cdot X\cdot\phi\> \\
    &\leq \sum_j\abs{R_{e_j}}\abs{\<X,e_j\>\phi+\frac{1}{n}e_j\cdot X\cdot\phi} \\
    &\leq \Big(\sum_j\abs{R_{e_j}}^2\Big)^{\frac{1}{2}}\Big(\sum_j\abs{\<X,e_j\>\phi+\frac{1}{n}e_j\cdot X\cdot\phi}^2\Big)^{\frac{1}{2}} \\
    &= \abs{R}\cdot\Big(\frac{n-1}{n}\sum_j \rRe\<iR_{e_j},\phi\>^2\Big)^{\frac{1}{2}}.
}
The desired estimate then follows.
\end{proof}

\subsection{The integral Schr\"odinger--Lichnerowicz formula}
Starting from \eqref{eq:S-L} and using the zero mode equation \eqref{eq:zero-mode}, we obtain an identity that will be the basis for our integral estimates:
\eq{\label{eq:4}
    0 = \rRe\<(\D^A)^2\varphi,\varphi\> = \rRe\<(\nabla^A)^*\nabla^A\varphi,\varphi\> + \frac{n(n-1)}{4}f^2 - \rRe\<i\,\rd A^\flat\cdot\varphi,\varphi\>.
}
Since
\eq{
    -\frac{1}{2}\Delta(f^2) = -\frac{1}{2}\Delta\abs{\varphi}^2 = \rRe\<(\nabla^A)^*\nabla^A\varphi,\varphi\> - \abs{\nabla^A\varphi}^2,
}
we have
\eq{\label{eq:5}
    \rRe\<(\nabla^A)^*\nabla^A\varphi,\varphi\> = -\frac{1}{2}\Delta(f^2) + \abs{\nabla^A\varphi}^2 = -f\Delta f - \abs{\nabla f}^2 + \abs{\nabla^A\varphi}^2.
}
Combining \eqref{eq:6}, \eqref{eq:4} and \eqref{eq:5} gives
\eq{\label{eq:7}
    f^2\rRe\<i\,\rd A^\flat\cdot\phi,\phi\> = \rRe\<i\rd A^\flat\cdot\varphi,\varphi\> = -f\Delta f + \frac{1}{n-1}\abs{\nabla f}^2 + \frac{n(n-1)}{4}f^2 + f^2\abs{S}^2.
}
Set $f=u^{\frac{n-1}{n-2}}$. Then \eqref{eq:7} becomes
\eq{\label{eq:8}
    u^2\rRe\<i\,\rd A^\flat\cdot\phi,\phi\> = \frac{n-1}{n-2}u\Big(-\Delta u+\frac{n(n-2)}{4}u\Big) + u^2\abs{S}^2.
}
Integrating yields
\eq{\label{eq:int}
   \int_{\S^n} u^2\rRe\<i\,\rd A^\flat\cdot\phi,\phi\> = \frac{n-1}{n-2}\int_{\S^n} \Big(\abs{\nabla u}^2+\frac{n(n-2)}{4}u^2\Big) + \int_{\S^n}  u^2\abs{S}^2.
}

\subsection{An associated \texorpdfstring{$2$}{2}-form}

In our previous work \cite{WZ26curl}, the three-dimensional case was treated using the $1$-form
\eq{\label{previous_one_form}
    \langle e_i\cdot \phi, \phi \rangle e^i.
}
In higher dimensions it is more effective to work instead with a naturally associated $2$-form built from the unit spinor $\phi$.
\begin{definition}\label{def3.2}
Define a $2$-form $\beta$ by
\eq{\label{eq:133}
   \beta = \sum_{j<k}\beta(e_j,e_k) e^j\wedge e^k\coloneqq \sum_{j<k}\rRe\<ie_j\cdot e_k\cdot\phi,\phi\> e^j\wedge e^k= -\sum_{j<k}\rIm\<e_j\cdot e_k\cdot\phi,\phi\>e^j\wedge e^k.
}
\end{definition}

\begin{lemma}\label{lem1.6}
We have
\eq{\label{eq:130}
    -\beta(e_j,e_k)\rIm\<\nabla^A_{e_j}\phi,\nabla^A_{e_k}\phi\> + \sum_j \rIm\<\nabla^A_{e_j}\phi,\phi\>^2 \leq \abs{\nabla^A\phi}^2.
}
\end{lemma}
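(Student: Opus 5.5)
Throughout, write $\psi_j\coloneqq\nabla^A_{e_j}\phi$ and read the repeated indices in \eqref{eq:130} as summed over all $j,k$. The plan is to split off the component of $\psi_j$ along the complex line spanned by $\phi$, which accounts exactly for the term $\sum_j\rIm\<\psi_j,\phi\>^2$. The remaining $\beta$-term is then controlled by the operator norm of the antisymmetric matrix $(\beta(e_j,e_k))$, and we will show this norm is at most $1$.

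\emph{Step 1: splitting off the $\phi$-component.} Since $\abs{\phi}=1$ and $\nabla^A$ is unitary, $\rRe\<\psi_j,\phi\>=\frac12 e_j\abs{\phi}^2=0$. Hence we can write
$$\psi_j=ia_j\phi+\eta_j,\qquad \<\eta_j,\phi\>=0 \ \text{(Hermitian orthogonality)},$$
where $a_j$ is real and $a_j^2=\rIm\<\psi_j,\phi\>^2$. Pythagoras then gives $\abs{\nabla^A\phi}^2=\sum_j a_j^2+\sum_j\abs{\eta_j}^2$. Next we expand $\rIm\<\psi_j,\psi_k\>$. The term $a_ja_k\<i\phi,i\phi\>=a_ja_k$ is real, so it contributes nothing to the imaginary part. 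The cross terms $\<ia_j\phi,\eta_k\>$ and $\<\eta_j,ia_k\phi\>$ vanish because $\eta_k\perp\phi$ in the complex sense. Therefore $\rIm\<\psi_j,\psi_k\>=\rIm\<\eta_j,\eta_k\>$, and \eqref{eq:130} reduces to the claim
$$-\sum_{j,k}\beta(e_j,e_k)\rIm\<\eta_j,\eta_k\>\le \sum_j\abs{\eta_j}^2 .$$

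\emph{Step 2: the operator norm of $\beta$ is at most $1$.} For tangent vectors $X\perp Y$, Clifford multiplication by unit vectors is an isometry, so $\abs{X\cdot Y\cdot\phi}=\abs{X}\abs{Y}$. Cauchy--Schwarz then gives $\abs{\beta(X,Y)}\le\abs{X}\abs{Y}$. Now let $M$ be the real antisymmetric matrix $M_{jk}=\beta(e_j,e_k)$. Its operator norm equals $\max_{\abs{X}=1}\abs{MX}$. For such a maximizer, take $Y=MX/\abs{MX}$. Antisymmetry gives $\<X,MX\>=0$, so $Y\perp X$, and therefore $\abs{MX}=\beta(X,Y)\le 1$. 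Hence $\norm{M}_{op}\le1$.

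\emph{Step 3: conclusion.} View $\eta=(\eta_1,\dots,\eta_n)$ as an element of $\R^n\otimes\Sigma$ with the real inner product $\rRe\<\cdot,\cdot\>$. Up to the sign convention relating $\rIm$ and $\rRe\<i\,\cdot,\cdot\>$, the left-hand side of the claim equals
$$\rRe\Big\<(M\otimes 1)\eta,\; i\eta\Big\>.$$
Cauchy--Schwarz and Step 2 bound this by $\norm{M}_{op}\,\abs{\eta}^2\le\sum_j\abs{\eta_j}^2$, which proves the claim and hence \eqref{eq:130}. The only step that is not pure bookkeeping is Step 2: the pointwise bound on $\beta$ holds only for orthogonal pairs, and it is antisymmetry that upgrades it to an operator-norm bound. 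Everything else is sign-checking of $\rIm$ versus $\rRe\<i\,\cdot,\cdot\>$, and the sign is irrelevant because the final estimate is two-sided.
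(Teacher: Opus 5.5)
Your argument is correct and is essentially the paper's proof. Both split $\nabla^A_{e_j}\phi$ into its $i\phi$-component $\rIm\<\nabla^A_{e_j}\phi,\phi\>\,i\phi$ plus a remainder Hermitian-orthogonal to $\phi$, and then use that the antisymmetric matrix $(\beta(e_j,e_k))$ has norm at most $1$; the only difference is that the paper puts $\beta$ in block-diagonal normal form and applies $\abs{\beta(e_j,e_k)}\le 1$ with $2\abs{\rIm\<a,b\>}\le\abs{a}^2+\abs{b}^2$ on each $2\times2$ block, whereas you bound the operator norm directly and use one global Cauchy--Schwarz. (Two minor points: $\abs{\beta(X,Y)}\le\abs{X}\abs{Y}$ holds for all $X,Y$, since $\beta(X,Y)=\rRe\<iX\cdot Y\cdot\phi,\phi\>$ by $\rRe\<i\phi,\phi\>=0$, so antisymmetry is not actually needed in your Step~2; and your identity there should read $\abs{MX}=\beta(Y,X)$, a harmless sign slip.)
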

\begin{proof}
Since $\abs{\phi}=1$, we have $\rRe\<\nabla^A_{e_j}\phi,\phi\>=0$, hence $\nabla^A\phi\perp\phi$. Moreover $i\phi\perp\phi$. We therefore decompose $\nabla^A\phi$ into its $i\phi$-component and the orthogonal remainder:
\eq{
    \nabla^A_{e_j}\phi = \Big(\nabla^A_{e_j}\phi - \rIm\<\nabla^A_{e_j}\phi,\phi\>i\phi\Big) + \rIm\<\nabla^A_{e_j}\phi,\phi\>i\phi.
}
In fact, one can check that  the two parts are orthogonal, since
\eq{
    \rRe\<\nabla^A_{e_j}\phi , \,\rIm\<\nabla^A_{e_j}\phi,\phi\>i\phi\> = \rIm\<\nabla^A_{e_j}\phi,\phi\> \rRe\<\nabla^A_{e_j}\phi,i\phi\> = \rIm\<\nabla^A_{e_j}\phi,\phi\>^2
}
and
\eq{
    \rRe\< \rIm\<\nabla^A_{e_j}\phi,\phi\>i\phi, \,\rIm\<\nabla^A_{e_j}\phi,\phi\>i\phi\> = \rIm\<\nabla^A_{e_j}\phi,\phi\>^2.
}
Therefore
\eq{\label{eq:129}
    \abs{\nabla^A\phi}^2 = \sum_j \abs{\nabla^A_{e_j}\phi - \rIm\<\nabla^A_{e_j}\phi,\phi\>i\phi}^2 + \sum_j \rIm\<\nabla^A_{e_j}\phi,\phi\>^2.
}
Since $\rRe\<\nabla^A_{e_j}\phi,\phi\>=0$, and hence $\rIm\<\nabla^A_{e_j}\phi,i\phi\>=0$, we have
\eq{\label{eq:131}
    \rIm\<\nabla^A_{e_j}\phi,\nabla^A_{e_k}\phi\> &= \rIm\<\nabla^A_{e_j}\phi - \rIm\<\nabla^A_{e_j}\phi,\phi\>i\phi,\nabla^A_{e_k}\phi - \rIm\<\nabla^A_{e_k}\phi,\,\phi\>i\phi\>.
}
Note that \eqref{eq:130} is independent of the choice of the basis. Choose a local basis $\{e_j\}$ such that $\beta$ is of the standard form
\eq{\label{eq:132}
    \beta = \sum_{m=1}^{[n/2]} \beta(e_{2m-1},e_{2m})e^{2m-1}\w e^{2m}.
}
By definition \eqref{eq:133}, we see that
\eq{\label{eq:134}
    \abs{\beta(e_j,e_k)} = \abs{\rRe\<ie_j\cdot e_k\cdot\phi,\phi\>} \leq 1.
}
Hence it follows by \eqref{eq:131}, \eqref{eq:132}, and \eqref{eq:134} that
\eq{\label{eq:135}
    &-\beta(e_j,e_k)\rIm\<\nabla^A_{e_j}\phi,\nabla^A_{e_k}\phi\> = -2\sum_{j<k} \beta(e_j,e_k)\rIm\<\nabla^A_{e_j}\phi,\nabla^A_{e_k}\phi\> \\
    &=-2\sum_{m=1}^{[n/2]} \beta(e_{2m-1},e_{2m})\rIm\<\nabla^A_{e_{2m-1}}\phi - \rIm\<\nabla^A_{e_{2m-1}}\phi,\phi\>i\phi,\nabla^A_{e_{2m}}\phi - \rIm\<\nabla^A_{e_{2m}}\phi,\,\phi\>i\phi\> \\
    &\leq\sum_{j=1}^{2[n/2]} \abs{\nabla^A_{e_j}\phi - \rIm\<\nabla^A_{e_j}\phi,\phi\>i\phi}^2 \leq \sum_{j=1}^n \abs{\nabla^A_{e_j}\phi - \rIm\<\nabla^A_{e_j}\phi,\phi\>i\phi}^2,
}
where we used Cauchy--Schwarz. Combining \eqref{eq:129} and \eqref{eq:135} gives the desired estimate \eqref{eq:130}.
\end{proof}

\begin{remark}
From \eqref{eq:132} and \eqref{eq:134} we immediately obtain the pointwise bound
\eq{\label{eq:l_beta} 
    \abs{\beta}^2 \leq \Big[\frac{n}{2}\Big]=\nu.
}
\end{remark}
This is where the constant $\nu$ in Theorem \ref{thm_main} comes from.

\begin{lemma}\label{lem1.7}
Recall $u=f^{\frac{n-2}{n-1}}$. Then
\eq{
    \rd^*(u^2\beta) = -2u^2f^{-1}\rIm\<R_{e_j},\phi\>e^j =-2u^2\rIm\<S_{e_j},\phi\>e^j.
}
\end{lemma}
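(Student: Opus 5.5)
The plan is a direct pointwise computation of $\rd^*(u^2\beta)$ in a local orthonormal frame $\{e_j\}$ with $\nabla e_j=0$ at the given point, using $\rd^*\omega=-e_j\ip\nabla_{e_j}\omega$. I will first differentiate $\beta$, then split $\nabla^A\phi=T+S$ via \eqref{eq:138}, and finally check that the $T$-contribution cancels against the term coming from $\nabla(u^2)$.

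\emph{Step 1: divergence of $\beta$.} Since $\beta(e_j,e_k)=\rRe\<ie_j\cdot e_k\cdot\phi,\phi\>$ is invariant under $\phi\mapsto e^{ih}\phi$, we may differentiate with $\nabla^A$ instead of $\nabla$; the $A$-terms cancel because $\rRe\<ie_j e_k(i\phi),\phi\>+\rRe\<ie_je_k\phi,i\phi\>=0$. The operator $ie_j\cdot e_k\cdot$ ($j\ne k$) is self-adjoint for $\rRe\<\cdot,\cdot\>$. Hence
\[
(\nabla_{e_l}\beta)(e_j,e_k)=2\rRe\<ie_j\cdot e_k\cdot\nabla^A_{e_l}\phi,\phi\>,
\]
and therefore
\[
-(\rd^*\beta)(e_k)=\sum_j(\nabla_{e_j}\beta)(e_j,e_k)=2\sum_{j\neq k}\rRe\<ie_j\cdot e_k\cdot\nabla^A_{e_j}\phi,\phi\>.
\]
Using $e_j\cdot e_k=-e_k\cdot e_j-2\delta_{jk}$, and restoring the $j=k$ term (for which $e_k\cdot e_k=-1$), this becomes
\[
-2\rRe\<ie_k\cdot\D^A\phi,\phi\>-2\rRe\<i\nabla^A_{e_k}\phi,\phi\>.
\]
The sign bookkeeping here must be checked carefully.

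\emph{Step 2: insert the decomposition.} Write $\nabla^A_{e_k}\phi=T_{e_k}+S_{e_k}$. By Lemma~\ref{lem1.4}(3), and since $R=fS$, we have $e_j\cdot S_{e_j}=0$. So $\D^A\phi=e_j\cdot T_{e_j}=-\nabla\log f\cdot\phi$, consistent with \eqref{eq:1}. The $S$-part of $\rd^*\beta$ is then a multiple of $\rRe\<iS_{e_k},\phi\>=-\rIm\<S_{e_k},\phi\>$, which gives exactly the claimed right-hand side. The $T$-part is computed explicitly from \eqref{eq:138}. Everything reduces to the scalars $\rRe\<i\,e_k\cdot X\cdot\phi,\phi\>=\beta(e_k,X)$ (for $X\perp e_k$) and $\rRe\<i\phi,\phi\>=0$. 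Thus the $T$-contribution equals a constant times $\beta(\nabla\log f,e_k)$, equivalently a multiple of $\nabla\log f\ip\beta$.

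\emph{Step 3: the product rule.} We have
\[
\rd^*(u^2\beta)=u^2\rd^*\beta-\nabla(u^2)\ip\beta,
\]
with
\[
\nabla(u^2)=2u^2\nabla\log u=\tfrac{2(n-2)}{n-1}\,u^2\,\nabla\log f .
\]
The main obstacle is verifying that the coefficient of $\nabla\log f\ip\beta$ coming from the $T$-part equals precisely $\frac{2(n-2)}{n-1}$ with the right sign, so that it cancels. This is exactly what the exponent $u=f^{\frac{n-2}{n-1}}$ should be designed for. The coefficient arises from the terms $\frac{1}{n-1}(e_k\cdot\nabla\log f\cdot\phi)$ and $\frac{1}{n-1}e_j\cdot e_j\cdot\nabla\log f\cdot\phi$ summed with weight $n$, giving a combination like $\frac{2}{n-1}-2$.

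\emph{Conclusion.} After the cancellation, $\rd^*(u^2\beta)=-2u^2\rIm\<S_{e_j},\phi\>e^j$. The first equality of the statement then follows immediately from $R_X=fS_X$. If the computed coefficient fails to cancel, I would recheck the self-adjointness sign in Step~1 and the sign convention for $\rd^*$.
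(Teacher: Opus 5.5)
Your proposal is correct and is essentially the paper's computation. The only organizational difference is that the paper applies $\rd^*\omega=-e_l\ip\nabla_{e_l}\omega$ directly to $u^2\beta$ written via the spinor $u\phi$, so $\nabla u$ is absorbed into $\D^A(u\phi)=-\frac{u}{n-1}\nabla(\log f)\cdot\phi$, whereas you split it off with the Leibniz rule. Your coefficient check does close: the $T$-part and the Dirac term give $(\rd^*\beta)(e_k)=-\frac{2(n-2)}{n-1}\beta(e_k,\nabla\log f)-2\rIm\<S_{e_k},\phi\>$, and $-\nabla(u^2)\ip\beta$ contributes exactly $+\frac{2(n-2)}{n-1}u^2\beta(e_k,\nabla\log f)$, so the $\nabla\log f$ terms cancel.
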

\begin{proof}
Rewrite \eqref{eq:133} as
\eq{
    u^2\beta = -\sum_{j<k}\rIm\<e_j\cdot e_k\cdot u\phi,u\phi\>e^j\w e^k = -\frac{1}{2}\sum_{j,k}\rIm\<e_j\cdot e_k\cdot u\phi,u\phi\>e^j\w e^k.
}
Hence
\eq{\label{eq:138.1}
    \rd^*(u^2\beta) = -e_l\ip\nabla_{e_l}(u^2\beta) = -2\rIm\<\nabla^A_{e_j}(u\phi),u\phi\>e^j - 2\rIm\<e_j\cdot\D^A(u\phi),u\phi\>e^j.
}
Since $u=f^{\frac{n-2}{n-1}}$, we have
\eq{
    \nabla u = \frac{n-2}{n-1}u\nabla(\log f).
}
By \eqref{eq:138} we have
\eq{\label{eq:139}
    \rIm\<\nabla^A_{e_j}(u\phi),u\phi\> &= \rIm\<u\nabla^A_{e_j}\phi,u\phi\> = u\,\rIm\<T_{e_j}+S_{e_j},u\phi\> \\
    &= \frac{u^2}{n-1}\rIm\<e_j\cdot\nabla(\log f)\cdot\phi,\phi\> + u^2\rIm\<S_{e_j},\phi\>.
}
Using \eqref{eq:1} we have
\eq{
    \D^A(u\phi) = -\frac{u}{n-1}\nabla(\log f)\cdot\phi.
}
Hence
\eq{\label{eq:140}
    \rIm\<e_j\cdot\D^A(u\phi),u\phi\> = -\frac{u^2}{n-1}\rIm\<e_j\cdot\nabla(\log f)\cdot\phi,\phi\>.
}
The claim follows from \eqref{eq:138.1}, \eqref{eq:139}, and \eqref{eq:140}.
\end{proof}

\begin{lemma}\label{lem1.8}
We have
\eq{\label{eq:144}
    \rd A^\flat(e_j,e_k) = -e_j(\rIm\<\nabla^A_{e_k}\phi,\phi\>) + e_k(\rIm\<\nabla^A_{e_j}\phi,\phi\>) + \frac{1}{2}\beta(e_j,e_k) - 2\rIm\<\nabla^A_{e_j}\phi,\nabla^A_{e_k}\phi\>.
}
\end{lemma}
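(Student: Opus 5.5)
The identity will be obtained by pairing the curvature identity of Lemma~\ref{lem0.1} with $\phi$ itself and then rewriting the second-derivative terms through the Leibniz rule. Work at a point with a frame satisfying $\nabla e_j=0$, and consider the function $\rIm\<\nabla^A_{e_k}\phi,\phi\>$. Since $\nabla^A$ is a unitary (metric) connection on the spinor bundle, differentiating gives
\eq{
    e_j\big(\rIm\<\nabla^A_{e_k}\phi,\phi\>\big) = \rIm\<\nabla^A_{e_j}\nabla^A_{e_k}\phi,\phi\> + \rIm\<\nabla^A_{e_k}\phi,\nabla^A_{e_j}\phi\>.
}
Swapping $j$ and $k$ and subtracting, and using the antisymmetry $\rIm\<a,b\>=-\rIm\<b,a\>$, we get
\eq{
    e_j\big(\rIm\<\nabla^A_{e_k}\phi,\phi\>\big) - e_k\big(\rIm\<\nabla^A_{e_j}\phi,\phi\>\big) = \rIm\<[\nabla^A_{e_j},\nabla^A_{e_k}]\phi,\phi\> - 2\rIm\<\nabla^A_{e_j}\phi,\nabla^A_{e_k}\phi\>.
}
Note that $[e_j,e_k]=0$ at the point, so no $\nabla^A_{[e_j,e_k]}$ term appears.

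Next, insert Lemma~\ref{lem0.1}. Using $\abs{\phi}=1$ we have $\rIm\<-i\,\rd A^\flat(e_j,e_k)\phi,\phi\>=-\rd A^\flat(e_j,e_k)$. For the Clifford term, $\rIm\<-\tfrac12 e_j\cdot e_k\cdot\phi,\phi\>=\tfrac12\beta(e_j,e_k)$ by Definition~\ref{def3.2}, since $\beta(e_j,e_k)=-\rIm\<e_j\cdot e_k\cdot\phi,\phi\>$. The commutator term is therefore $\tfrac12\beta(e_j,e_k)-\rd A^\flat(e_j,e_k)$. Solving for $\rd A^\flat(e_j,e_k)$ then yields exactly \eqref{eq:144}.

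The only delicate points are sign conventions. First, the convention $\<\cdot,\cdot\>$ must be complex-linear in the first slot so that $\rIm\<i\psi,\psi\>=\abs{\psi}^2$, consistent with how Lemma~\ref{lem1.6} uses $\rIm\<\nabla^A\phi,\phi\>$. Second, the commutator identity in Lemma~\ref{lem0.1} must be applied with the same ordering as in the Schr\"odinger--Lichnerowicz computation. I expect checking these signs, and confirming that for $j=k$ both sides vanish trivially, to be the main point requiring care. The identity holds on $\{f>0\}$, where $\phi$ is defined and smooth.
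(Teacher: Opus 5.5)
Your proposal is correct and follows essentially the same route as the paper: differentiate $\rIm\<\nabla^A_{e_k}\phi,\phi\>$ using metric compatibility of $\nabla^A$, antisymmetrize in $j,k$ to isolate $\rIm\<[\nabla^A_{e_j},\nabla^A_{e_k}]\phi,\phi\>$ together with $-2\rIm\<\nabla^A_{e_j}\phi,\nabla^A_{e_k}\phi\>$, and then insert Lemma~\ref{lem0.1}. The only difference is cosmetic: the paper works with the opposite overall sign, i.e.\ with $[\nabla^A_{e_k},\nabla^A_{e_j}]$. Your sign checks for the Hermitian convention, for $\beta$, and for the trivial case $j=k$ are all correct.
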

\begin{proof}
We begin with
\eq{\label{eq:142}
    &-e_j(\rIm\<\nabla^A_{e_k}\phi,\phi\>) + e_k(\rIm\<\nabla^A_{e_j}\phi,\phi\>) \\
    &= -\rIm\<\nabla^A_{e_j}\nabla^A_{e_k}\phi,\,\phi\> - \rIm\<\nabla^A_{e_k}\phi,\nabla^A_{e_j}\phi\> + \rIm\<\nabla^A_{e_k}\nabla^A_{e_j}\phi,\,\phi\> + \rIm\<\nabla^A_{e_j}\phi,\nabla^A_{e_k}\phi\> \\
    &= \rIm\<[\nabla^A_{e_k},\nabla^A_{e_j}]\phi,\,\phi\> + 2\rIm\<\nabla^A_{e_j}\phi,\nabla^A_{e_k}\phi\>.
}
Combining \eqref{eq:lem2.1} and \eqref{eq:142} we obtain
\eq{
    -e_j(\rIm\<\nabla^A_{e_k}\phi,\phi\>) + e_k(\rIm\<\nabla^A_{e_j}\phi,\phi\>) = -\frac{1}{2}\beta(e_j,e_k) + \rd A^\flat(e_j,e_k) + 2\rIm\<\nabla^A_{e_j}\phi,\nabla^A_{e_k}\phi\>.
}
The claim follows.
\end{proof}

\section{A first estimate using the critical Sobolev inequality}\label{sec4}

In this section we derive a first quantitative lower bound for $\norm{\rd A^\flat}_{L^{n/2}}$, improving the Frank--Loss estimate \cite{FL1}. The starting point is the integral identity \eqref{eq:int}, combined with the sharp critical Sobolev inequality on $\S^n$. We then explain how a stability refinement due to Dolbeault--Esteban--Figalli--Frank--Loss \cite{DEFFL22} yields the sharp bound \eqref{eq:main_ineq} in sufficiently large dimensions.

We start by estimating the left-hand side of \eqref{eq:int}. By Cauchy--Schwarz and \eqref{eq:l_beta}, we have
\eq{\label{eq:9}
    u^2 \rRe\<i\,\rd A^\flat\cdot\phi,\phi\> = u^2\<\rd A^\flat,\beta\> \leq \Big[\frac{n}{2}\Big]^{\frac{1}{2}}f^{\frac{2(n-2)}{n-1}}\abs{\rd A^\flat}.
}
Recall  the sharp critical Sobolev inequality on $\S^n$ 
\eq{\label{eq:10}
    \int_{\S^n} \Big(-u\Delta u + \frac{n(n-2)}{4}u^2\Big) \geq \frac{n(n-2)}{4}\,\omega_n^{\frac{2}{n}} \Big( \int_{\S^n} u^{\frac{2n}{n-2}} \Big)^{\frac{n-2}{n}}.
}
Inserting \eqref{eq:9} and \eqref{eq:10} into \eqref{eq:int} yields
\eq{\label{eq:11}
    \int_{\S^n} f^{\frac{2(n-2)}{n-1}}\abs{\rd A^\flat} \geq \Big[\frac{n}{2}\Big]^{-\frac{1}{2}} \frac{n(n-1)}{4}\,\omega_n^{\frac{2}{n}} \norm{f}_{\frac{2n}{n-1}}^{\frac{2(n-2)}{n-1}} + \Big[\frac{n}{2}\Big]^{-\frac{1}{2}}\int_{\S^n} f^{\frac{2(n-2)}{n-1}}\abs{S}^2.
}
Next, by H\"older's inequality,
\eq{\label{eq:12}
    \int_{\S^n} f^{\frac{2(n-2)}{n-1}}\abs{\rd A^\flat} \leq \norm{f}_{\frac{2n}{n-1}}^{\frac{2(n-2)}{n-1}}\cdot\norm{\rd A^\flat}_{\frac{n}{2}}.
}
Combining \eqref{eq:11} and \eqref{eq:12} we obtain
\eq{\label{eq:13}
\norm{\rd A^\flat}_{\frac{n}{2}} \ge \Big[\frac{n}{2}\Big]^{-\frac{1}{2}} \frac{n(n-1)}{4} \omega_n^{\frac{2}{n}} + \Big[\frac{n}{2}\Big]^{-\frac{1}{2}}\norm{f}_{\frac{2n}{n-1}}^{-\frac{2(n-2)}{n-1}} \int_{\S^n} f^{\frac{2(n-2)}{n-1}}\abs{S}^2.
}
If one discards the second term, 
then \eqref{eq:13} reduces to the Frank--Loss bound \cite{FL1}, which is exactly one half of the expected sharp constant. The key point is therefore to retain quantitative information from the second term.

\begin{lemma}
We have
\eq{\label{eq:155}
    \int u^2\<\rd A^\flat,\beta\> \leq  \frac 12 \Big[  \frac n2 \Big] \int u^2 + \frac{1}{n-1}\int u^2\abs{\nabla(\log f)}^2 + \frac{2n-1}{n}\int u^2\abs{S}^2.
}
\end{lemma}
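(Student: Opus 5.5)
The plan is to integrate the pointwise identity of Lemma~\ref{lem1.8} against $u^2\beta$. I will move the exact-derivative part onto $\rd^*(u^2\beta)$ via Lemma~\ref{lem1.7}, and then absorb everything with Lemma~\ref{lem1.6} and Lemma~\ref{lem1.5}.

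Set $a_j\coloneqq\rIm\<\nabla^A_{e_j}\phi,\phi\>$, so $a=a_je^j$ is a real $1$-form, and set $s_j\coloneqq\rIm\<S_{e_j},\phi\>$.

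\textbf{Step 1: pair Lemma~\ref{lem1.8} with $\beta$.} Contracting \eqref{eq:144} with $\beta$ (summing over $j<k$) gives the pointwise identity
\eq{
\<\rd A^\flat,\beta\> = -\<\rd a,\beta\> + \frac12\abs{\beta}^2 - \beta(e_j,e_k)\rIm\<\nabla^A_{e_j}\phi,\nabla^A_{e_k}\phi\>,
}
where the last term is summed over all $j,k$.

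\textbf{Step 2: integrate the exact part.} Multiply by $u^2$ and integrate. By duality and Lemma~\ref{lem1.7},
\eq{
-\int u^2\<\rd a,\beta\> = -\int\<a,\rd^*(u^2\beta)\> = 2\int u^2 a_js_j.
}

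\textbf{Step 3: bound the remaining terms.} By \eqref{eq:l_beta}, $\frac12\abs{\beta}^2\le\frac12[\frac n2]$. By Lemma~\ref{lem1.6}, the last term is at most $\abs{\nabla^A\phi}^2-\sum_ja_j^2$. Hence
\eq{
\int u^2\<\rd A^\flat,\beta\> \le \frac12\Big[\frac n2\Big]\int u^2 + \int u^2\Big(\abs{\nabla^A\phi}^2 - \sum_j a_j^2 + 2\sum_j a_js_j\Big).
}
Completing the square, $-a_j^2+2a_js_j\le s_j^2$. This eliminates the unknown $a$ entirely.

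\textbf{Step 4: evaluate $\abs{\nabla^A\phi}^2$ and $\sum_j s_j^2$.} Since $\nabla^A\phi=T+S$ with $\rRe\<T,S\>=0$ (shown in the proof of Lemma~\ref{lem1.1}), \eqref{eq:3} gives
\eq{
\abs{\nabla^A\phi}^2=\frac1{n-1}\abs{\nabla(\log f)}^2+\abs{S}^2.
}
Next, $s_j=\rIm\<S_{e_j},\phi\>=-\rRe\<iS_{e_j},\phi\>$. Since $R=fS$, Lemma~\ref{lem1.5} divided by $f^2$ yields $\sum_j s_j^2\le\frac{n-1}{n}\abs{S}^2$. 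Adding the two gives the coefficient $1+\frac{n-1}{n}=\frac{2n-1}{n}$ in front of $\int u^2\abs S^2$, which is exactly \eqref{eq:155}.

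\textbf{Main obstacle.} The delicate point is justifying the integration by parts in Step 2. The forms $a$ and $\beta$ are only defined where $f>0$, and the regularity of $\varphi$ and $A$ is limited. I would handle this by working with the combination $u^2a_j$. This quantity equals $u^2f^{-2}\rIm\<\nabla^A_{e_j}\varphi,\varphi\>$, which is controlled by $\abs{\nabla^A\varphi}$ times a power of $f$ and extends across the zero set. I would then either approximate by regular solutions or cut off near $\{f=0\}$, using that the cutoff error vanishes by the integrability assumptions ($\varphi\in L^p$, $\rd A^\flat\in L^{n/2}$, elliptic regularity) as in \cite{FL1}.

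The remaining steps are purely pointwise algebra already prepared by the cited lemmas. The one check needed there is the sign bookkeeping in Steps 1--2: the cross term must appear as $+2\int u^2a_js_j$, so that it combines with $-\sum_j a_j^2$ into a perfect square.
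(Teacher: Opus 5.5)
Your proposal is correct and follows the paper's proof essentially step for step. You pair Lemma~\ref{lem1.8} with $u^2\beta$, move the exact part onto $\rd^*(u^2\beta)$ via Lemma~\ref{lem1.7}, bound the quadratic term with Lemma~\ref{lem1.6}, absorb the cross term (the paper writes $2a_js_j\le a_j^2+s_j^2$, which is your completed square), and finish with Lemma~\ref{lem1.5} and $\abs{\nabla^A\phi}^2=\abs{T}^2+\abs{S}^2$. Two minor remarks: the sign of the cross term does not actually matter, since $-a_j^2\pm 2a_js_j\le s_j^2$ either way; and the paper also proves this lemma under the standing nowhere-vanishing assumption, deferring the behaviour near $\{f=0\}$ to the cut-off argument of Subsection~\ref{sec6.5}.
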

\begin{proof}
We start from the pointwise identity \eqref{eq:144} in Lemma~\ref{lem1.8}. Pairing with $\beta$, multiplying by $u^2$, and integrating over $\S^n$ gives
\eq{\label{eq:147}
    \int u^2\<\rd A^\flat,\beta\> &= \int u^2 \sum_{j<k} \beta(e_j,e_k)\Big(-e_j(\rIm\<\nabla^A_{e_k}\phi,\phi\>) + e_k(\rIm\<\nabla^A_{e_j}\phi,\phi\>)\Big) \\
    &\quad + \frac{1}{2}\int u^2\abs{\beta}^2 - 2\int u^2 \sum_{j<k} \beta(e_j,e_k)\rIm\<\nabla^A_{e_j}\phi,\nabla^A_{e_k}\phi\>.
}
We estimate the first and the third integrals  on the right-hand side separately. For the first integral, integrating by parts and using Lemma~\ref{lem1.7} gives
\eq{\label{eq:145}
    &\int u^2 \sum_{j<k} \beta(e_j,e_k)\Big(-e_j(\rIm\<\nabla^A_{e_k}\phi,\phi\>) + e_k(\rIm\<\nabla^A_{e_j}\phi,\phi\>)\Big) \\
    &= \int u^2 \beta(e_j,e_k)\cdot e_k(\rIm\<\nabla^A_{e_j}\phi,\phi\>) = -\int (\nabla_{e_k}(u^2\beta))(e_j,e_k)\cdot \rIm\<\nabla^A_{e_j}\phi,\phi\> \\
    &= -\int \rd^*(u^2\beta)(e_j)\cdot \rIm\<\nabla^A_{e_j}\phi,\phi\> = 2\int u^2f^{-1}\rIm\<R_{e_j},\phi\>\cdot \rIm\<\nabla^A_{e_j}\phi,\phi\> \\
    &\leq \int u^2f^{-2}\sum_j \rIm\<R_{e_j},\phi\>^2 + \int u^2 \sum_j \rIm\<\nabla^A_{e_j}\phi,\phi\>^2.
}
For the third one, Lemma~\ref{lem1.6} yields
\eq{\label{eq:146}
    - 2\int u^2 \sum_{j<k} \beta(e_j,e_k)\rIm\<\nabla^A_{e_j}\phi,\nabla^A_{e_k}\phi\> \leq \int u^2\abs{\nabla^A\phi}^2 - \int u^2\sum_j\rIm\<\nabla^A_{e_j}\phi,\phi\>^2.
}
Substituting \eqref{eq:145} and \eqref{eq:146} into \eqref{eq:147} and using Lemma~\ref{lem1.5} gives
\eq{\label{eq:148}
    \int u^2\<\rd A^\flat,\beta\> \leq \frac{n-1}{n}\int u^2\abs{S}^2 + \frac{1}{2} \int u^2\abs{\beta}^2 + \int u^2\abs{\nabla^A\phi}^2.
}
Finally, we use the identities
\eq{\label{eq:149}
    \abs{\nabla^A\phi}^2 = \abs{T}^2 + \abs{S}^2 = \frac{1}{n-1}\abs{\nabla(\log f)}^2 + \abs{S}^2,
}
and the pointwise bound
\eq{\label{eq:150}
    \abs{\beta}^2 \leq \Big[\frac{n}{2}\Big],
}
which follows from \eqref{eq:l_beta}. Inserting \eqref{eq:149} and \eqref{eq:150} into \eqref{eq:148} gives \eqref{eq:155}.
\end{proof}

We now restate Proposition~\ref{prop_key0}.

\begin{proposition}\label{prop_key}
(1) If $n$ is odd, then
\eq{\label{eq:key_odd}
    \Big[\frac{n}{2}\Big]^{\frac{1}{2}} \norm{\rd A^\flat}_{\frac{n}{2}} \norm{u}_{\frac{2n}{n-2}}^2\geq \frac{2(n^2-3n+1)}{(n-2)^2}\int \abs{\nabla u}^2+ \frac{n(n-1)}{2}\int u^2. 
}
(2) If $n$ is even, then
\eq{\label{eq:key_even}
    \Big[\frac{n}{2}\Big]^{\frac{1}{2}} \norm{\rd A^\flat}_{\frac{n}{2}} \norm{u}_{\frac{2n}{n-2}}^2\geq \frac{2(n^2-3n+1)}{(n-2)^2}\int \abs{\nabla u}^2 + \Big(\frac{n(n-1)}{2}-\frac{n}{4(n-1)}\Big)\int u^2.
}
\end{proposition}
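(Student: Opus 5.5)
The plan is to eliminate the unknown term $\int u^2\abs{S}^2$ between the integral Schr\"odinger--Lichnerowicz identity \eqref{eq:int} and the upper bound \eqref{eq:155}. This yields a lower bound for $X\coloneqq\int u^2\<\rd A^\flat,\beta\>$ in terms of $u$ alone. The left inequality in both \eqref{eq:key_odd} and \eqref{eq:key_even} then follows from the pointwise bound \eqref{eq:9}, namely $u^2\<\rd A^\flat,\beta\>\le \nu^{1/2}u^2\abs{\rd A^\flat}$, together with H\"older's inequality with exponents $\frac n2$ and $\frac{n}{n-2}$. This gives
\[
X\le \nu^{\frac12}\norm{\rd A^\flat}_{\frac n2}\norm{u}_{\frac{2n}{n-2}}^2 .
\]

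For the lower bound on $X$, I set $Q\coloneqq\int\big(\abs{\nabla u}^2+\frac{n(n-2)}{4}u^2\big)$. Then \eqref{eq:int} reads
\[
\int u^2\abs{S}^2=X-\frac{n-1}{n-2}Q .
\]
Next I rewrite the gradient term in \eqref{eq:155} using $\nabla u=\frac{n-2}{n-1}u\nabla\log f$. This gives
\[
\frac1{n-1}\int u^2\abs{\nabla\log f}^2=\frac{n-1}{(n-2)^2}\int\abs{\nabla u}^2 .
\]
Substituting both into \eqref{eq:155}, the terms in $X$ combine with coefficient $1-\frac{2n-1}{n}=-\frac{n-1}{n}$. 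After rearranging I obtain
\[
X\ge \frac{n}{n-1}\Big[\frac{(2n-1)(n-2)-(n-1)}{(n-2)^2}\int\abs{\nabla u}^2+\Big(\frac{n(2n-1)}{4}-\frac{\nu}{2}\Big)\int u^2\Big].
\]

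It remains to compare coefficients with the claimed ones; this is the only step needing care. For the gradient coefficient I must check
\[
n(2n^2-6n+3)\ \ge\ 2(n-1)(n^2-3n+1).
\]
The difference of the two sides is $(2n-1)(n-2)>0$, so the inequality holds. For the $L^2$ coefficient I split by parity. If $n$ is odd, then $\nu=\frac{n-1}{2}$ and the coefficient is $\frac{n(2n^2-2n+1)}{4(n-1)}$. Since $2n^2-2n+1\ge 2(n-1)^2$, this is at least $\frac{n(n-1)}2$. If $n$ is even, then $\nu=\frac n2$ and the coefficient equals $\frac{n^2}{2}$, which exceeds $\frac{n(n-1)}2-\frac{n}{4(n-1)}$.

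Since $\int\abs{\nabla u}^2\ge0$ and $\int u^2\ge0$, decreasing the coefficients to the stated ones preserves the inequality. This gives \eqref{eq:key_odd} and \eqref{eq:key_even}. The main obstacle is only bookkeeping: signs when solving for $X$, and the conversion between $f$ and $u$. If the parity-dependent constant in the even case turns out to come from a sharper estimate of $\frac12\int u^2\abs{\beta}^2$ than \eqref{eq:l_beta}, the argument above still covers the statement, because the bound obtained is stronger than the one claimed.
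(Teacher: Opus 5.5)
Your strategy is the paper's own: eliminate $\int u^2\abs{S}^2$ between \eqref{eq:int} and \eqref{eq:155}, then bound $X$ from above by \eqref{eq:160}. However, the ``rearranging'' step is miscomputed, and the lower bound for $X$ that you display is false. Substituting $\int u^2\abs{S}^2=X-\frac{n-1}{n-2}Q$ into \eqref{eq:155} gives
\[
\frac{n-1}{n}\,X\;\ge\;\frac{(2n-1)(n-1)}{n(n-2)}\,Q-\frac{n-1}{(n-2)^2}\int\abs{\nabla u}^2-\frac{\nu}{2}\int u^2 .
\]
After multiplying by $\frac{n}{n-1}$, the $Q$-term is $\frac{2n-1}{n-2}\,Q$. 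Your bracket instead contains $\frac{2n-1}{n-2}Q$ \emph{before} the factor $\frac{n}{n-1}$ is applied, so you dropped the factor $\frac{n-1}{n}$ in $\frac{2n-1}{n}\cdot\frac{n-1}{n-2}$. The correct result is
\[
X\;\ge\;\frac{2(n^2-3n+1)}{(n-2)^2}\int\abs{\nabla u}^2+\Big(\frac{n(2n-1)}{4}-\frac{n\nu}{2(n-1)}\Big)\int u^2 .
\]
Its $L^2$-coefficient is exactly $\frac{n(n-1)}{2}$ for $\nu=\frac{n-1}{2}$, and exactly $\frac{n(n-1)}{2}-\frac{n}{4(n-1)}$ for $\nu=\frac n2$.

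So there is no slack: the constants in the proposition are precisely what the elimination yields. Your coefficient comparisons (the difference $(2n-1)(n-2)$, the value $\frac{n^2}{2}$) and your closing remark about obtaining a ``stronger'' bound are artifacts of the slip. Your displayed bound also cannot hold. For odd $n$, the equality configuration of Theorem~\ref{thm_main} (a Killing spinor with $A$ a multiple of its Reeb field) has $u$ constant, so $\int u^2=\omega_n^{2/n}\norm{u}_{\frac{2n}{n-2}}^2$. Combined with \eqref{eq:160}, your bound would force $\nu^{1/2}\norm{\rd A^\flat}_{\frac n2}\ge\frac{n(2n^2-2n+1)}{4(n-1)}\,\omega_n^{2/n}>\frac{n(n-1)}{2}\,\omega_n^{2/n}$, contradicting the attained equality. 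Likewise, the even-case deficit $-\frac{n}{4(n-1)}$ comes solely from $\nu=\frac n2$ in $\frac12\int u^2\abs{\beta}^2\le\frac{\nu}{2}\int u^2$, not from any sharper estimate. With the algebra corrected, your argument coincides with the paper's proof.
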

\begin{proof}
By definition \eqref{eq:133},\eq{   \rRe\<i\rd A^\flat\cdot\phi,\phi\> = \<\rd A^\flat,\beta\>.}
Therefore the integral identity \eqref{eq:int} can be rewritten as

\eq{\label{eq:158}
    \int u^2 \<\rd A^\flat,\beta\> = \frac{n-1}{n-2}\int\abs{\nabla u}^2 + \frac{n(n-1)}{4}\int u^2 + \int u^2\abs{S}^2.
}
Combining \eqref{eq:158} with \eqref{eq:155} and using
\eq{
    u^2\abs{\nabla(\log f)}^2 = \frac{(n-1)^2}{(n-2)^2}\abs{\nabla u}^2,
}
we obtain a lower bound for the remainder term:
\eq{
    \int u^2\abs{S}^2 \geq \frac{n(n-3)}{(n-2)^2} \int \abs{\nabla u}^2 + \Big( \frac{n^2}{4}-\frac{n}{2(n-1)}\Big[\frac{n}{2}\Big] \Big)\int u^2.
}
Substituting this estimate back into \eqref{eq:158} yields:

(1) if $n$ is odd, then
\eq{\label{eq:159}
    \int u^2 \<\rd A^\flat,\beta\> \geq \frac{2(n^2-3n+1)}{(n-2)^2}\int \abs{\nabla u}^2 + \frac{n(n-1)}{2}\int u^2;
}

(2) if $n$ is even, then
\eq{\label{eq:159.1}
    \int u^2 \<\rd A^\flat,\beta\> \geq \frac{2(n^2-3n+1)}{(n-2)^2}\int \abs{\nabla u}^2 + \Big(\frac{n(n-1)}{2}-\frac{n}{4(n-1)}\Big)\int u^2.
}
On the other hand, by Cauchy--Schwarz and H\"older (cf. \eqref{eq:9}),
\eq{\label{eq:160}
    \int u^2 \<\rd A^\flat,\beta\> \leq \Big[\frac{n}{2}\Big]^{\frac{1}{2}} \int u^2\abs{\rd A^\flat} \leq \Big[\frac{n}{2}\Big]^{\frac{1}{2}} \norm{\rd A^\flat}_{\frac{n}{2}} \norm{u}_{\frac{2n}{n-2}}^2.
}
The claim follows from \eqref{eq:159}, \eqref{eq:159.1}, and \eqref{eq:160}.
\end{proof}

At this point, if one directly uses the critical Sobolev inequality, then it leads to the following estimate.

\begin{corollary}
(1) If $n$ is odd, then
\eq{\label{eq:1st_estimate}
    \Big[\frac{n}{2}\Big]^{\frac{1}{2}}\norm{\rd A^\flat}_{\frac{n}{2}} \geq \Big( 1 - \frac{1}{(n-1)(n-2)} \Big) \frac{n(n-1)}{2}\omega_n^{\frac{2}{n}} + \frac{n}{2(n-2)}\norm{u}_{\frac{2n}{n-2}}^{-2} \int u^2.
}
(2) If $n$ is even, then
\eq{\label{eq:1st_estimate_even}
    \Big[\frac{n}{2}\Big]^{\frac{1}{2}}\norm{\rd A^\flat}_{\frac{n}{2}} \geq \Big( 1 - \frac{1}{(n-1)(n-2)} \Big) \frac{n(n-1)}{2}\omega_n^{\frac{2}{n}} + \Big(\frac{n}{2(n-2)}-\frac{n}{4(n-1)}\Big)\norm{u}_{\frac{2n}{n-2}}^{-2} \int u^2.
}
\end{corollary}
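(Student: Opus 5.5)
The plan is to feed the sharp Sobolev inequality \eqref{eq:10} into the gradient term of Proposition~\ref{prop_key} and then divide by $\norm{u}_{\frac{2n}{n-2}}^2$. Since $u=f^{\frac{n-2}{n-1}}$ with $f\in L^{\frac{2n}{n-1}}$, we have $u\in L^{\frac{2n}{n-2}}$ and $u\in W^{1,2}$ (the latter by finiteness of the right-hand side of \eqref{eq:158}). So \eqref{eq:10} applies and, written in terms of $\nabla u$, gives
\begin{equation*}
\int\abs{\nabla u}^2 \ \ge\ \frac{n(n-2)}{4}\Big(\omega_n^{\frac{2}{n}}\norm{u}_{\frac{2n}{n-2}}^2-\int u^2\Big).
\end{equation*}
The coefficient $c_n\coloneqq \frac{2(n^2-3n+1)}{(n-2)^2}$ in front of $\int\abs{\nabla u}^2$ in \eqref{eq:key_odd} and \eqref{eq:key_even} is positive. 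Multiplying the display by $c_n$ is therefore legitimate, and it gives
\begin{equation*}
c_n\int\abs{\nabla u}^2\ \ge\ \frac{n(n^2-3n+1)}{2(n-2)}\,\omega_n^{\frac{2}{n}}\norm{u}_{\frac{2n}{n-2}}^2-\frac{n(n^2-3n+1)}{2(n-2)}\int u^2 .
\end{equation*}

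Next I would check the leading constant. One has $\frac{n(n^2-3n+1)}{2(n-2)}=\frac{n(n-1)}{2}\cdot\frac{(n-1)(n-2)-1}{(n-1)(n-2)}$, which is exactly $\big(1-\frac{1}{(n-1)(n-2)}\big)\frac{n(n-1)}{2}$. This produces the first term on the right of \eqref{eq:1st_estimate} after dividing by $\norm{u}_{\frac{2n}{n-2}}^2$.

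For the $\int u^2$ term in the odd case, I combine $\frac{n(n-1)}{2}$ from \eqref{eq:key_odd} with $-\frac{n(n^2-3n+1)}{2(n-2)}$. This gives
\begin{equation*}
\frac{n}{2(n-2)}\big[(n-1)(n-2)-(n^2-3n+1)\big]=\frac{n}{2(n-2)},
\end{equation*}
which is the claimed coefficient. In the even case the only change is the extra $-\frac{n}{4(n-1)}\int u^2$ carried over unchanged from \eqref{eq:key_even}, which yields \eqref{eq:1st_estimate_even}.

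There is no real obstacle beyond the bookkeeping of these constants and the positivity of $c_n$ for $n\ge3$ (indeed $n^2-3n+1\ge1$). The one point worth stating explicitly is that $u$ has enough regularity to use \eqref{eq:10}, which follows from \eqref{eq:158}.
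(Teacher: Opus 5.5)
Your proposal is correct and follows the paper's proof. Both insert the sharp Sobolev inequality on $\S^n$ into the gradient term of Proposition~\ref{prop_key}, using that $\frac{2(n^2-3n+1)}{(n-2)^2}>0$, and your constant bookkeeping agrees with the paper's regrouping $\big(\frac{2(n-1)}{n-2}-\frac{2}{(n-2)^2}\big)\big(\int\abs{\nabla u}^2+\frac{n(n-2)}{4}\int u^2\big)+\frac{n}{2(n-2)}\int u^2$, with the even case differing only by the carried-over $-\frac{n}{4(n-1)}\int u^2$.
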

\begin{proof}
Using the critical Sobolev inequality we have
\eq{\label{eq:161}
    &\frac{2(n^2-3n+1)}{(n-2)^2}\int \abs{\nabla u}^2 + \frac{n(n-1)}{2}\int u^2 \\
    &= \Big( \frac{2(n-1)}{n-2} - \frac{2}{(n-2)^2} \Big) \Big( \int \abs{\nabla u}^2 + \frac{n(n-2)}{4}\int u^2 \Big) + \frac{n}{2(n-2)} \int u^2 \\
    &\geq \Big( \frac{2(n-1)}{n-2} - \frac{2}{(n-2)^2} \Big)\cdot \frac{n(n-2)}{4}\omega_n^{\frac{2}{n}}\norm{u}_{\frac{2n}{n-2}}^2 + \frac{n}{2(n-2)} \int u^2 \\
    &= \Big( 1 - \frac{1}{(n-1)(n-2)} \Big) \frac{n(n-1)}{2}\omega_n^{\frac{2}{n}}\norm{u}_{\frac{2n}{n-2}}^2 + \frac{n}{2(n-2)} \int u^2.
}
The claim follows from \eqref{eq:key_odd}, \eqref{eq:key_even}, and \eqref{eq:161}.
\end{proof}

The first term in the right-hand side of \eqref{eq:1st_estimate} for the odd case (and of \eqref{eq:1st_estimate_even}
for the even case) gives better bounds that are very close to the conjectured sharp inequality \eqref{eq:main_ineq}, but they are still not optimal. Together with the second term, we obtain the optimal  estimate \eqref{eq:main_ineq}  for  sufficiently large $n$.

In order to prove it we need the following important quantitative stability result for the sharp Sobolev inequality  established in \cite[Theorem 1.1]{DEFFL22} 
\eq{
    \int \Big( \abs{\nabla u}^2 + \frac{n(n-2)}{4}u^2 \Big) - \frac{n(n-2)}{4}\omega_n^{\frac{2}{n}}\norm{u}_{\frac{2n}{n-2}}^2 \geq \frac{c_0}{n}\inf_{v\in\mathcal{M}}\int \Big( \abs{\nabla (u-v)}^2 + \frac{n(n-2)}{4}(u-v)^2 \Big),
}
for a  $c_0>0$ which is independent of $n$ and $\mathcal{M}$ denotes the manifold of optimizers for the critical Sobolev inequality. Composing $u$ with a conformal transformation of $\S^n$, we may assume that the closest optimizer to $u$ is  a constant $v\equiv c$. In particular,
\eq{
    \inf_{v\in\mathcal{M}}\int \Big( \abs{\nabla (u-v)}^2 + \frac{n(n-2)}{4}(u-v)^2 \Big)
    = \int \Big( \abs{\nabla (u-c)}^2 + \frac{n(n-2)}{4}(u-c)^2 \Big)
    \geq \int \abs{\nabla u}^2.
}
Hence
\eq{\label{eq:162}
    \int \Big( \abs{\nabla u}^2 + \frac{n(n-2)}{4}u^2 \Big) - \frac{n(n-2)}{4}\omega_n^{\frac{2}{n}}\norm{u}_{\frac{2n}{n-2}}^2 \geq \frac{c_0}{n}\int \abs{\nabla u}^2.
}
Replacing the inequality in \eqref{eq:161} by the improved one \eqref{eq:162} gives
\eq{
    &\frac{2(n^2-3n+1)}{(n-2)^2}\int \abs{\nabla u}^2 + \frac{n(n-1)}{2}\int u^2 \\
    &\geq \Big( \frac{2(n-1)}{n-2} - \frac{2}{(n-2)^2} \Big)\cdot\Big( \frac{n(n-2)}{4}\omega_n^{\frac{2}{n}}\norm{u}_{\frac{2n}{n-2}}^2 + \frac{c_0}{n}\int \abs{\nabla u}^2 \Big) + \frac{n}{2(n-2)} \int u^2.
}
As long as
\eq{\label{eq:163}
    \Big( \frac{2(n-1)}{n-2} - \frac{2}{(n-2)^2} \Big)\cdot\frac{c_0}{n} \geq \frac{2}{(n-2)^2}, \quad\hbox{i.e.}\quad \frac{n}{n^2-3n+1}\le c_0,
}
we distinguish the following two cases.

(1) If $n$ is odd, then
\eq{
    &\frac{2(n^2-3n+1)}{(n-2)^2}\int \abs{\nabla u}^2 + \frac{n(n-1)}{2}\int u^2 \\
    &\geq \Big( \frac{2(n-1)}{n-2} - \frac{2}{(n-2)^2} \Big)\cdot \frac{n(n-2)}{4}\omega_n^{\frac{2}{n}}\norm{u}_{\frac{2n}{n-2}}^2 + \frac{n}{2(n-2)}\Big( \frac{4}{n(n-2)}\int \abs{\nabla u}^2 + \int u^2 \Big) \\
    &\geq \Big( \frac{2(n-1)}{n-2} - \frac{2}{(n-2)^2} \Big)\cdot \frac{n(n-2)}{4}\omega_n^{\frac{2}{n}}\norm{u}_{\frac{2n}{n-2}}^2 + \frac{n}{2(n-2)}\cdot \omega_n^{\frac{2}{n}}\norm{u}_{\frac{2n}{n-2}}^2 \\
    &= \frac{n(n-1)}{2}\omega_n^{\frac{2}{n}}\norm{u}_{\frac{2n}{n-2}}^2.
}
Combining with \eqref{eq:159} and \eqref{eq:160} gives \eqref{eq:main_ineq}.

(2) If $n$ is even, the same argument applies after replacing \eqref{eq:160} by the refined bound \eqref{eq:200} proved in Section~\ref{sec5}, and again yields \eqref{eq:main_ineq}.

Since \eqref{eq:163} holds for all sufficiently large $n$, this shows that the sharp lower bound \eqref{eq:main_ineq} follows immediately in high dimensions from the stability estimate of \cite{DEFFL22}.

\section{Proof of Theorem~\ref{thm_main}}\label{sec5}

We now prove Theorem~\ref{thm_main}. With Proposition~\ref{prop_key} in hand, instead of directly using the critical Sobolev inequality, we use an improved Aubin--Sobolev inequality (Theorem~\ref{thm_a_n0}), so that we are able to deal with all dimensions.

Note that given any closed $2$-form $B\in L^{\frac{n}{2}}$, there always exists a vector field $A\in L^n$, such that $B=\rd A^\flat$, see \cite{FL1}. Therefore we may assume $A\in L^n$. The regularity theory in \cites{FL1,WZ25b} then yields $\varphi\in L^{\frac{2n}{n-2}}$, hence $A\cdot\varphi\in L^2$, and therefore $\varphi\in W^{1,2}$. For the moment, we assume that $\varphi$ is nowhere vanishing; the extension across the possible zero set is justified by a  regularization argument in Subsection~\ref{sec6.5}.

By conformal invariance of the zero mode equation, we may use the Hersch balancing argument   to assume  that the function $u=f^{\frac{n-2}{n-1}}$ studied in the previous section satisfies the barycenter condition
\eq{\label{eq:barycenter}
    \int_{\S^n} xu^{\frac{2n}{n-2}} = 0.
}
Hence we can apply Theorem \ref{thm_a_n0} to this $u$.

It is clear that the statement of Theorem \ref{thm_a_n0} implies
\eq{\label{eq:goal_u}
    \frac{2(n^2-3n+1)}{(n-2)^2}\int \abs{\nabla u}^2 \geq \frac{n(n-1)}{2}\Big(\omega_n^{\frac{2}{n}}\norm{u}_{\frac{2n}{n-2}}^2 - \int u^2\Big).
}

\subsection{The odd \texorpdfstring{$n\geq5$}{n>=5} case}
For odd $n\geq5$, the conclusion follows immediately. Indeed, by \eqref{eq:key_odd} we have
\eq{\label{eq:odd_step1}
    \Big[\frac{n}{2}\Big]^{\frac{1}{2}} \norm{\rd A^\flat}_{\frac{n}{2}} \,\norm{u}_{\frac{2n}{n-2}}^2
    \geq \frac{2(n^2-3n+1)}{(n-2)^2}\int \abs{\nabla u}^2+ \frac{n(n-1)}{2}\int u^2.
}
Using \eqref{eq:goal_u} 
we have
\eq{\label{eq:}   
\begin{array}{rcl} \displaystyle \vspace{2mm}\Big[\frac{n}{2}\Big]^{\frac{1}{2}} \norm{\rd A^\flat}_{\frac{n}{2}} \,\norm{u}_{\frac{2n}{n-2}}^2  
    &\geq &
    \displaystyle\frac{n(n-1)}{2}\, \Big(\omega_n^{\frac{2}{n}}\norm{u}_{\frac{2n}{n-2}}^2-\int u^2\Big) +\frac {n(n-1) } 2 \int u^2 
    \\ 
&= & 
\displaystyle
\frac{n(n-1)}{2}\,\omega_n^{\frac{2}{n}}\norm{u}_{\frac{2n}{n-2}}^2,
\end{array}
}
and we finish the proof for this case.

\subsection{The even \texorpdfstring{$n\geq6$}{n≥6} case}\label{sec7.2}

In this subsection we treat the even-dimensional cases $n\geq6$.  As mentioned in the introduction, the previous proof is not enough for the even case, since we have only a weaker inequality \eqref{eq:key_even0}. However, 
when $n$ is even, the spinor bundle is reducible. We will use this property to improve the first  inequality  in \eqref{eq:key_even0}. This will be given in Proposition \ref{thm7.1} below. 

Since the spinor bundle is reducible, we have the chirality splitting of the spinor bundle:
\eq{
    \Sigma = \Sigma^+ \oplus \Sigma^-,
}
where
\eq{
    \Sigma^\pm = \{\psi\mid\omega\cdot\psi=\pm\psi\},
}
and $\omega=i^\nu e_1\cdots e_{2\nu}$ is the complex volume form. A spinor in $\Sigma^\pm$ is called a positive/negative spinor.

The Levi--Civita connection preserves chirality, while Clifford multiplication by a  vector exchanges it. Consequently, if $\phi$ is positive/negative, then
\eq{
    \phi, \quad i\phi, \quad \nabla^A_{e_j}\phi, \quad T_{e_j}, \quad S_{e_j}
}
all have the same chirality, whereas $\D$ exchanges $\Sigma^+$ and $\Sigma^-$.

At each point we decompose $\varphi$ into its chiral components,
\eq{
    \varphi = \varphi_+ + \varphi_- \quad\hbox{with}\quad \omega\cdot\varphi_\pm=\pm\varphi_\pm,
}
Then
\eq{
    \D\varphi = \D\varphi_+ + \D\varphi_-
}
and
\eq{
    iA\cdot\varphi = iA\cdot\varphi_+ + iA\cdot\varphi_-.
}
Since $\D$ and Clifford multiplication by $A$ both exchange chirality, the zero mode equation decouples into
\eq{
    \D\varphi_+=iA\cdot\varphi_+, \qquad \D\varphi_-=iA\cdot\varphi_-.
}
Thus, to estimate $\norm{\rd A^\flat}_{n/2}$ it suffices to treat one chirality, and we may assume $\varphi=\varphi_+$.

For simplicity, from now on we denote
\eq{
    F \coloneqq \rd A^\flat, \qquad \nu=\frac{n}{2}\geq3.
}
At each point $x\in\S^n$, the endomorphism $iF\cdot: \Sigma^+|_x\to\Sigma^+|_x$ is self-adjoint. We denote its largest eigenvalue by
\eq{
    \Lambda(x) \coloneqq \lambda_{\max}(iF\cdot)(x).
}
It suffices to prove the following estimate:
\begin{proposition}\label{thm7.1}
We have
\eq{\label{eq7:WTP}
    \norm{\Lambda}_\nu < \sqrt{\nu}\Big(1-\frac{1}{2(2\nu-1)^2}\Big)\norm{F}_\nu.
}
\end{proposition}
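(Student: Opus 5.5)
The plan is to combine a pointwise spectral computation with one global topological identity, because no pointwise improvement of $\Lambda\le\sqrt{\nu}\,|F|$ is possible. At a point, pick an orthonormal frame putting $F$ in normal form,
\[
F=\sum_{m=1}^{\nu}\lambda_m\, e^{2m-1}\wedge e^{2m}.
\]
Then $iF\cdot=\sum_m \lambda_m J_m$ with $J_m\coloneqq i\,e_{2m-1}\cdot e_{2m}\cdot$. These are commuting involutions, and $\prod_m J_m=\omega$ acts as $+1$ on $\Sigma^+$. So the eigenvalues of $iF\cdot$ on $\Sigma^+$ are $\sum_m\varepsilon_m\lambda_m$ with $\varepsilon_m=\pm1$ and $\prod_m\varepsilon_m=1$ (up to a fixed sign convention).

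This gives an explicit formula. If the Pfaffian $\mathrm{Pf}(F)=\lambda_1\cdots\lambda_\nu$ is $\ge 0$, then $\Lambda=\sum_m|\lambda_m|$. If $\mathrm{Pf}(F)<0$, then $\Lambda=\sum_m|\lambda_m|-2\min_m|\lambda_m|$. In the first case $\Lambda$ can equal $\sqrt\nu\,|F|$, namely when all $|\lambda_m|$ coincide. This is the higher-dimensional analogue of a self-dual form for $\nu=2$. Hence the improvement must come from a global constraint, not a pointwise one.

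The global input is that $F=\rd A^\flat$ is exact on $\S^n$. Then
\[
F^{\wedge\nu}=\rd\big(A^\flat\wedge F^{\wedge(\nu-1)}\big),
\qquad\text{so}\qquad
\int_{\S^n}\mathrm{Pf}(F)\,\rdV=\frac{1}{\nu!}\int_{\S^n}F^{\wedge\nu}=0 .
\]
For $F\in L^\nu$ and $A\in L^n$ (which we may assume, as in Section~\ref{sec5}), this follows by approximating $A$ by smooth potentials. So $F$ cannot be of the maximizing type with positive Pfaffian everywhere: the positive-Pfaffian region must be balanced by a negative-Pfaffian region, where $\Lambda$ is strictly smaller.

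To quantify this, I will prove a pointwise algebraic inequality of the form
\[
\Lambda^\nu\le a\,|F|^\nu+b\,\mathrm{Pf}(F),\qquad a=\nu^{\nu/2}\Big(1-\frac{1}{2(2\nu-1)^2}\Big)^{\nu},
\]
for a suitable constant $b>0$. Integrating it and using $\int\mathrm{Pf}(F)=0$ gives $\norm{\Lambda}_\nu^\nu\le a\norm{F}_\nu^\nu$.

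By homogeneity and symmetry the inequality reduces to $|\lambda|=1$ with $\mu_m=|\lambda_m|\ge0$, in two cases. In the case $\mathrm{Pf}\ge0$ we need $(\sum\mu_m)^\nu\le a+b\prod\mu_m$. In the case $\mathrm{Pf}<0$ we need $(\sum\mu_m-2\min\mu_m)^\nu+b\prod\mu_m\le a$.

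I then check compatibility at the balanced point $\mu_m=\nu^{-1/2}$. The first case forces $b\ge\nu^{\nu}-a\,\nu^{\nu/2}$, which is of size $\nu^\nu\cdot O(\nu^{-1})$. The second case needs $\big((\nu-2)\nu^{-1/2}\big)^\nu+b\,\nu^{-\nu/2}\le a$. This has the room $(1-\delta)^\nu-(1-2/\nu)^\nu>0$ with $\delta=\frac{1}{2(2\nu-1)^2}$, which is why a constant of that size should be attainable.

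The main obstacle is verifying both inequalities over the whole simplex, not only at the symmetric point. The most dangerous configurations are those where some $\mu_m$ are small: there $\prod\mu_m\approx0$, so the Pfaffian gives no help, but $\Lambda$ is then also strictly below $\sqrt\nu$ by a definite amount. I would first reduce by Lagrange multipliers (or by a smoothing argument that moves the $\mu_m$ toward equality) to configurations with at most two distinct values of $\mu_m$. On those I would carry out a one-variable check and choose $b$ explicitly. I would also try to derive the specific constant $\frac{1}{2(2\nu-1)^2}$ from this optimization, possibly via a cruder but explicit bound such as $(1-\delta)^\nu\ge1-\nu\delta$.

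Finally, strict inequality in \eqref{eq7:WTP} follows either because the chosen $a$ carries a little slack, or by examining the equality cases. Equality would force the pointwise inequality to be saturated almost everywhere, which is incompatible with $\int\mathrm{Pf}(F)=0$ unless $F=0$, and $F=0$ is excluded by Proposition~\ref{prop_key}.
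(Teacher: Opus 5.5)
Your architecture is the paper's: the normal form of $F$, the commuting involutions $i\,e_{2m-1}\cdot e_{2m}\cdot$ whose product is the volume element, the case formula for $\Lambda$ according to the sign of $\mathrm{Pf}(F)$, the identity $\int\mathrm{Pf}(F)\,\rdV=\frac{1}{\nu!}\int F^{\wedge\nu}=0$ from exactness, and a pointwise inequality $\Lambda^\nu\le a\abs{F}^\nu+b\,\mathrm{Pf}(F)$ that you then integrate.

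\textbf{The gap.} This pointwise inequality is the whole substance of the proposition, and you never prove it. You check it only at the balanced point $\mu_m=\nu^{-1/2}$ and outline a Lagrange-multiplier reduction. That reduction is sound when $\mathrm{Pf}\ge0$: at an interior critical point every $\mu_m$ solves the same quadratic $2\theta x^2-\nu(\sum\mu)^{\nu-1}x+b\prod\mu=0$, and boundary points have $\prod\mu_m=0$, where $\Lambda\le\sqrt{\nu-1}$. But what remains is not a one-variable check. It is a family indexed by the multiplicity $k$ of one of the two values and by $\nu$, so you need bounds uniform in both, and $b$ is still unchosen. Also, $\delta=\frac{1}{2(2\nu-1)^2}$ is not something to extract from an optimization. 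It is fixed in advance, to absorb the deficit $-\frac{n}{4(n-1)}$ in the even-dimensional lower bound, so you only need to verify the inequality for this $\delta$.

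\textbf{How the paper closes it.} It takes $b=\nu^\nu/8$ and avoids optimization. Normalize $\abs{F}=1$.

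In the case $\mathrm{Pf}\ge0$, suppose $\Lambda=\sum\mu_m>\sqrt\nu(1-\delta)$ (otherwise there is nothing to prove). Then $\sum_m(1-\sqrt\nu\mu_m)^2=2\nu-2\sqrt\nu\Lambda<2\nu\delta$. At most $\nu-1$ of the $\mu_m$ lie below $\nu^{-1/2}$, so Cauchy--Schwarz gives $\sum_{\mu_m<\nu^{-1/2}}(1-\sqrt\nu\mu_m)<\frac{\sqrt{\nu(\nu-1)}}{2\nu-1}<\frac12$. The bound $\prod(1-x_m)\ge1-\sum x_m$ then yields $\prod_m\sqrt\nu\mu_m>\frac12$. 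This absorbs $\Lambda^\nu-a\le\nu^{\nu/2}\cdot\nu\delta<\nu^{\nu/2}/16$ into $\frac{\nu^\nu}{8}\prod\mu_m$.

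In the case $\mathrm{Pf}<0$, crude bounds suffice: $\Lambda\le\sqrt{\nu-1}$, $(1-\frac1\nu)^{\nu/2}<e^{-1/2}<\frac58$, $\prod\mu_m\le\nu^{-\nu/2}$ by AM--GM, and $(1-\delta)^\nu\ge1-\nu\delta>\frac34$.

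\textbf{Strictness.} Your second argument, that a.e.\ saturation is ``incompatible with $\int\mathrm{Pf}=0$'', does not follow as stated. What works is that the pointwise inequality is strict wherever $F\neq0$. When $\mathrm{Pf}=0$, some $\mu_m$ vanishes, so $\Lambda\le\sqrt{\nu-1}\abs{F}<\sqrt\nu(1-\delta)\abs{F}$. Combine this with $F\not\equiv0$.
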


\begin{remark}
The constant in the right-hand side of \eqref{eq7:WTP} is not sharp. It is designed in order to cancel the deficit $-\frac{n}{4(n-1)}$ in \eqref{eq:159.1}, see the following discussion.
\end{remark}

\noindent{\it Proof of Theorem \ref{thm_main} for even $n\ge 6$}. Assuming \eqref{eq7:WTP}, we can bound the key quantity $\int u^2\<F,\beta\>$ as follows. Since
\eq{
    \<F,\beta\> = \rRe\<iF\cdot\phi,\phi\> \leq \Lambda,
}
we obtain
\eq{\label{eq:200}
    \int u^2\<F,\beta\> \leq \int u^2\Lambda \leq \norm{\Lambda}_\nu\,\norm{u}_{\frac{2\nu}{\nu-1}}^2
    < \sqrt{\frac{n}{2}}\Big(1-\frac{1}{2(n-1)^2}\Big)\norm{F}_{\frac{n}{2}}\,\norm{u}_{\frac{2n}{n-2}}^2.
}
On the other hand, since $n\geq6$ is even, combining \eqref{eq:159.1} with \eqref{eq:goal_u} yields
\eq{\label{eq:201}
    \int u^2 \<F,\beta\> &\geq \frac{2(n^2-3n+1)}{(n-2)^2}\int \abs{\nabla u}^2 + \Big(\frac{n(n-1)}{2}-\frac{n}{4(n-1)}\Big)\int u^2 \\
    &\geq \frac{n(n-1)}{2}\omega_n^{\frac 2{n}}\norm{u}_{\frac{2n}{n-2}}^2 - \frac{n}{4(n-1)}\int u^2 \\
    &\geq \frac{n(n-1)}{2}\Big(1-\frac{1}{2(n-1)^2}\Big)\omega_n^{\frac 2 {n}}\norm{u}_{\frac{2n}{n-2}}^2,
}
where the last step uses H\"older's inequality. Comparing \eqref{eq:200} and \eqref{eq:201} gives
\eq{
    \sqrt{\frac{n}{2}}\norm{F}_{\frac{n}{2}}\geq \frac{n(n-1)}{2}\omega_n^{\frac 2  {n} },
}
which is exactly \eqref{eq:main_ineq}. \qed

\smallskip

It remains to prove \eqref{eq7:WTP}, i.e. Proposition~\ref{thm7.1}.

\begin{proof}[Proof of Proposition~\ref{thm7.1}]
Fix a point and choose a local orthonormal basis $\{e_j\}$ so that the $2$-form $F$ is in block-diagonal normal form,
\eq{
    F = \sum_{j=1}^\nu \lambda_je^{2j-1}\w e^{2j}, \quad \lambda_j\in\R.
}
Then
\eq{\label{eq:202}
    \abs{F}^2 = \sum_{j=1}^\nu \lambda_j^2,\qquad F^\nu = \nu!\prod_{j=1}^\nu\lambda_j\cdot \rdV.
}
Without loss of generality we order the coefficients so that
\eq{
    \abs{\lambda_1}\geq\cdots\geq\abs{\lambda_\nu}.
}

For each $j$ define the commuting involutions
\eq{\label{eq:F_j}
    F_j\coloneqq ie^{2j-1}\w e^{2j}\cdot = ie_{2j-1}\cdot e_{2j}\cdot.
}
They satisfy
\eq{
    F_j^2 = \rid, \qquad F_j^*=F_j, \qquad [F_j,F_k]=0.
}
Thus, on $\Sigma^+$ the family $\{F_j\}_{j=1}^\nu$ can be simultaneously diagonalized. On a common eigenspinor the eigenvalues are $\epsilon_j\in\{\pm1\}$. Moreover, the chirality constraint imposes (see \cite{WZ25b} for the related pure spinor interpretation)
\eq{
    \prod_{j=1}^\nu \epsilon_j = \epsilon_0 \qquad\hbox{for some}\quad \epsilon_0\in\{\pm1\}.
}

Consequently, the eigenvalues of $iF\cdot$ are precisely
\eq{
    \sum_{j=1}^\nu \epsilon_j\lambda_j \qquad\hbox{with}\quad \prod_{j=1}^\nu \epsilon_j = \epsilon_0,
}
so
\eq{
    \Lambda = \max \Big\{ \sum_{j=1}^\nu\epsilon_j\lambda_j \,\Big|\, \epsilon_j=\pm1, \ \prod_{j=1}^\nu\epsilon_j=\epsilon_0 \Big\}.
}
We distinguish three cases according to the sign of $\epsilon_0\prod_{j=1}^\nu\lambda_j$.

\noindent\textbf{Case 1.} If $\epsilon_0\prod_{j=1}^\nu\lambda_j>0$, then one can choose $\epsilon_j={\rm sign}(\lambda_j)$. Hence
\eq{
    \Lambda = \sum_{j=1}^\nu \abs{\lambda_j}.
}

\noindent\textbf{Case 2.} If $\epsilon_0\prod_{j=1}^\nu\lambda_j=0$, then at least one $\lambda_{j_0}=0$, and one can choose all the other $\epsilon_j={\rm sign}(\lambda_j)$. Hence
\eq{
    \Lambda = \sum_{j=1}^\nu \abs{\lambda_j}.
}

\noindent\textbf{Case 3.} If $\epsilon_0\prod_{j=1}^\nu\lambda_j<0$, then there must be an odd number of $\lambda_j$'s such that $\epsilon_j=-{\rm sign}(\lambda_j)$. Hence
\eq{\label{eq:209}
    \Lambda = \sum_{j=1}^{\nu-1} \abs{\lambda_j} - \abs{\lambda_\nu}.
}

We claim that it suffices to prove the following pointwise estimate:
\eq{\label{eq:203}
    \Lambda^\nu \leq \nu^{\frac{\nu}{2}}\Big(1-\frac{1}{2(2\nu-1)^2}\Big)^\nu\abs{F}^\nu + \frac{\nu^\nu}{8} \epsilon_0\prod_{j=1}^\nu\lambda_j.
}
Indeed, by \eqref{eq:202},
\eq{
    \int \prod_{j=1}^\nu\lambda_j \,\rdV = \frac{1}{\nu!}\int F^\nu = \frac{1}{\nu!}\int \rd(A^\flat\w F^{\nu-1}) = 0.
}
Therefore, after integrating \eqref{eq:203} the last term drops out and we obtain \eqref{eq7:WTP}.

\

\noindent\textit{Proof of Claim \eqref{eq:203}}: We first prove it for

{\it Case 1}  and {\it Case 2.}  
In these cases we have
\eq{\label{eq:210}
    \epsilon_0\prod_{j=1}^\nu\lambda_j = \prod_{j=1}^\nu\abs{\lambda_j} \geq 0.
}
By the scaling-invariance, we may normalize
\eq{\label{eq:205}
    \abs{F}^2 = \sum_{j=1}^\nu\abs{\lambda_j}^2 = 1.
}
It follows that
\eq{\label{eq:203.1}
    \#\{j:\abs{\lambda_j}<\nu^{-\frac{1}{2}}\} \leq \nu-1.
}
If
\eq{\label{eq:204}
    \Lambda = \sum_{j=1}^\nu\abs{\lambda_j} \leq \sqrt{\nu}\Big(1-\frac{1}{2(2\nu-1)^2}\Big),
}
then \eqref{eq:203} is immediate. We may therefore assume the complementary case
\eq{
    \Lambda = \sum_{j=1}^\nu\abs{\lambda_j} > \sqrt{\nu}\Big(1-\frac{1}{2(2\nu-1)^2}\Big).
}
This implies
\eq{\label{eq:206}
    \sum_{j=1}^\nu \big(1-\sqrt{\nu}\,\abs{\lambda_j}\big)^2 = 2\nu -2\sqrt{\nu}\Lambda < \frac{\nu}{(2\nu-1)^2}.
}
Using Cauchy--Schwarz together with \eqref{eq:203.1} and \eqref{eq:206} we obtain
\eq{
    \sum_{\abs{\lambda_j}<\nu^{-1/2}}\big(1-\sqrt{\nu}\,\abs{\lambda_j}\big) &\leq \Big( (\nu-1)\sum_{\abs{\lambda_j}<\nu^{-1/2}}\big(1-\sqrt{\nu}\,\abs{\lambda_j}\big)^2 \Big)^{\frac{1}{2}} \\
    &\leq \Big( (\nu-1)\sum_{j=1}^\nu\big(1-\sqrt{\nu}\,\abs{\lambda_j}\big)^2 \Big)^{\frac{1}{2}} = \Big((\nu-1)\big(2\nu-2\sqrt{\nu}\Lambda\big)\Big)^{\frac{1}{2}} \\
    &< \frac{\sqrt{\nu(\nu-1)}}{2\nu-1} < \frac{1}{2}.
}
It follows that
\eq{\label{eq:207}
    \prod_{j=1}^\nu \sqrt{\nu}\,\abs{\lambda_j} &\geq \prod_{\abs{\lambda_j}<\nu^{-1/2}}\sqrt{\nu}\,\abs{\lambda_j} = \prod_{\abs{\lambda_j}<\nu^{-1/2}}\Big\{1-\big(1-\sqrt{\nu}\,\abs{\lambda_j}\big)\Big\} \\
    &\geq 1 - \sum_{\abs{\lambda_j}<\nu^{-1/2}}\big(1-\sqrt{\nu}\,\abs{\lambda_j}\big) > \frac{1}{2}.
}
The normalization \eqref{eq:205} implies that
\eq{\label{eq:208}
    \Lambda = \sum_{j=1}^\nu\abs{\lambda_j} \leq \sqrt{\nu}\Big(\sum_{j=1}\abs{\lambda_j}^2\Big)^{\frac{1}{2}} = \sqrt{\nu}.
}
Combining \eqref{eq:207} and \eqref{eq:208} yields
\eq{
    &\Lambda^\nu - \nu^{\frac{\nu}{2}}\Big(1-\frac{1}{2(2\nu-1)^2}\Big)^\nu
    \leq \nu^{\frac{\nu}{2}}\Big\{ 1 - \Big(1-\frac{1}{2(2\nu-1)^2}\Big)^\nu \Big\} \\
    &\leq \frac{\nu^{\frac{\nu}{2}}\nu}{2(2\nu-1)^2}
    < \frac{\nu^{\frac{\nu}{2}}}{16}
    < \frac{\nu^{\frac{\nu}{2}}}{8}\prod_{j=1}^\nu \sqrt{\nu}\,\abs{\lambda_j}
    = \frac{\nu^\nu}{8}\prod_{j=1}^\nu\abs{\lambda_j}
    = \frac{\nu^\nu}{8}\epsilon_0\prod_{j=1}^\nu\lambda_j,
}
where we used \eqref{eq:210} in the last step.

It remains to show 

{\it Case 3.} 
In this case,
\eq{\label{eq:211}
    \epsilon_0\prod_{j=1}^\nu\lambda_j = -\prod_{j=1}^\nu\abs{\lambda_j} \leq 0.
}
Moreover, by \eqref{eq:209} and Cauchy--Schwarz,

\eq{\label{eq:212}
    \Lambda \leq \Big((\nu-1)\sum_{j=1}^{\nu-1}\abs{\lambda_j}^2\Big)^{\frac{1}{2}} \leq (\nu-1)^{\frac{1}{2}}\abs{F}.
}
Note that
\eq{\label{eq:213}
    (\nu-1)^{\frac{\nu}{2}} = \Big(1-\frac{1}{\nu}\Big)^{\frac{\nu}{2}}\nu^{\frac{\nu}{2}} < e^{-\frac{1}{2}}\nu^{\frac{\nu}{2}} < \frac{5}{8}\nu^{\frac{\nu}{2}}.
}
Combining \eqref{eq:212} and \eqref{eq:213} we obtain
\eq{\label{eq:214}
    \Lambda^\nu \leq (\nu-1)^{\frac{\nu}{2}}\abs{F}^\nu < \frac{5}{8}\nu^{\frac{\nu}{2}}\abs{F}^\nu.
}
On the other hand, by AM--GM,
\eq{\label{eq:215}
    \prod_{j=1}^\nu\abs{\lambda_j} \leq \Big(\nu^{-1}\sum_{j=1}^\nu\abs{\lambda_j}^2\Big)^{\frac{\nu}{2}} = \nu^{-\frac{\nu}{2}}\abs{F}^\nu.
}
Finally,
\eq{\label{eq:216}
    \Big(1-\frac{1}{2(2\nu-1)^2}\Big)^\nu \geq 1 - \frac{\nu}{2(2\nu-1)^2} > \frac{3}{4}.
}
Putting \eqref{eq:211}, \eqref{eq:214}, \eqref{eq:215}, and \eqref{eq:216} together gives
\eq{
    \Lambda^\nu - \frac{\nu^\nu}{8}\epsilon_0\prod_{j=1}^\nu\lambda_j < \nu^{\frac{\nu}{2}}\Big(1-\frac{1}{2(2\nu-1)^2}\Big)^\nu\abs{F}^\nu,
}
which is exactly \eqref{eq:203} in Case~3. This completes the proof.
\end{proof}

\subsection{The equality case for \texorpdfstring{$n\geq5$}{n>=5}}

Since the inequality in \eqref{eq7:WTP} (and hence \eqref{eq:200}) is strict, the bound \eqref{eq:main_ineq} is also strict when $n$ is even. Thus equality can occur only in odd dimensions. In that case, equality holds if and only if, modulo conformal and gauge transformations, the following equalities occur in the chain of estimates:
\begin{enumerate}
    \item equality in \eqref{eq:160}, i.e. $\rd A^\flat = c(x)\beta$ with $c(x)>0$, and $\abs{\rd A^\flat}^{\frac{n}{2}}=Cu^{\frac{2n}{n-2}}$;
    \item equalities in \eqref{eq:134} and \eqref{eq:135}, i.e. $\beta = \sum_{j=1}^{[n/2]}\epsilon_j e^{2j-1}\w e^{2j}$ with $\epsilon_j\in\{\pm1\}$. In particular $\phi$ is a simultaneous eigenspinor of the involutions $F_j$ in \eqref{eq:F_j}. Consequently there exists a unique unit vector field $\xi$ such that $\xi\cdot\phi=i\phi$ and $\ker(\beta)={\rm span}_\R\{\xi\}$;
    \item equality in \eqref{eq:goal_u}, i.e. $u\equiv{\rm const}>0$, hence $\abs{\varphi}\equiv{\rm const}>0$;
    \item equality in the final inequality in \eqref{eq:135}, i.e. $\nabla^A_{\xi} \phi - \rIm\<\nabla^A_{\xi}\phi,\phi\>i\phi = 0$;
    \item equality in \eqref{eq:128.1}, i.e.
    $S_{e_j}=\frac{i}{n-1}\big(n\rIm\<S_{e_j},\phi\>\phi + e_j\cdot \rIm\<S_{e_k},\phi\>e_k\cdot\phi\big)$.
\end{enumerate}

We briefly explain how these conditions force the geometric characterization. From (2) we obtain $\abs{\beta}^2=\frac{n-1}{2}$. Conditions (1) and (3) imply $c\equiv n$, hence $\rd A^\flat=n\beta$. Condition (3) gives $T=0$, and therefore $\nabla^A\phi=S$. Next, (4) yields $S_\xi=\rIm\<S_\xi,\phi\>i\phi$. Combining this with (5) gives $\rIm\<S_{e_j},\phi\>e_j=\frac{n-1}{2}\epsilon\xi$ for some $\epsilon\in\{\pm1\}$. Substituting back into (5) yields
$S_{e_j}=-\frac{\epsilon}{2}e_j\cdot\phi+\frac{n\epsilon}{2}\<\xi,e_j\>i\phi$.
Differentiating $\xi\cdot\phi=i\phi$ and choosing $A=-\frac{n\epsilon}{2}\xi$ (modulo gauge transformations), we arrive at
$\nabla_{e_j}\phi=-\frac{\epsilon}{2}e_j\cdot\phi$.
Thus $\phi$ (and hence $\varphi$) is a Killing spinor, and $A$ is a real multiple of the Reeb field associated with $\varphi$.

The reversed direction is easy to check.

\subsection{The case \texorpdfstring{$n=4$}{n=4}}

We now prove the main result in the remaining borderline dimension $n=4$. Let $\phi$ be a negative spinor as in Section~\ref{appendix_Clifford}.

\begin{theorem}\label{thm_n=4}
For $n=4$ we have
\eq{\label{eq:4D}
    \norm{\rd A^\flat}_2 > 6\,\omega_4^{\frac{1}{2}}.
}
\end{theorem}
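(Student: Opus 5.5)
The plan is to redo the argument of Section~\ref{sec7.2} with $\nu=2$, using the explicit Clifford structure of $\R^4$ (Section~\ref{appendix_Clifford}) in place of the general eigenvalue analysis. This should gain a full factor $\sqrt{2}$ in the upper bound, which is why the target $6\omega_4^{1/2}$ exceeds the general bound $2^{-1/2}\cdot 6\,\omega_4^{1/2}$ from \eqref{eq:main_ineq}. As in Section~\ref{sec7.2}, the zero mode equation decouples by chirality, so we may assume $\varphi=\varphi_-$ and hence $\phi\in\Sigma^-$.

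\textbf{Upper bound.} Write $F=\rd A^\flat=F^++F^-$ for the splitting into self-dual and anti-self-dual parts. In the normal form $F=\lambda_1e^1\w e^2+\lambda_2 e^3\w e^4$, the eigenvalues of $iF\cdot$ on $\Sigma^-$ are $\pm(\lambda_1-\lambda_2)$, up to the sign convention $\epsilon_0$. Hence
\[
\Lambda=\abs{\lambda_1-\lambda_2}=\sqrt2\,\abs{F^-},
\]
and $F^+$ acts trivially on $\Sigma^-$. By the same exactness argument as in the proof of Proposition~\ref{thm7.1},
\[
\int F\w F=\int\rd(A^\flat\w F)=0,
\]
which gives $\norm{F^+}_2=\norm{F^-}_2$ and therefore $\norm{F}_2^2=2\norm{F^-}_2^2$. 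Consequently
\[
\int u^2\<F,\beta\>\le\int u^2\Lambda\le\sqrt2\,\norm{F^-}_2\norm{u}_4^2=\norm{F}_2\norm{u}_4^2 .
\]
Compared with \eqref{eq:160}, the factor $[n/2]^{1/2}=\sqrt2$ has disappeared.

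\textbf{Lower bound.} It then suffices to show
\[
\int u^2\<F,\beta\> > 6\,\omega_4^{1/2}\norm{u}_4^2 .
\]
Inequality \eqref{eq:159.1} with $n=4$ gives
\[
\int u^2\<F,\beta\>\ge \tfrac52\int\abs{\nabla u}^2+\tfrac{17}{3}\int u^2 .
\]
The sharp Sobolev inequality \eqref{eq:10} alone only yields $5\omega_4^{1/2}\norm{u}_4^2+\frac23\int u^2$, which falls short. I would therefore first sharpen \eqref{eq:159.1} using the $4$-dimensional structure. Since $\Sigma^-$ has complex rank $2$, the real orthogonal complement of $\mathrm{span}_\R\{\phi,i\phi\}$ in $\Sigma^-$ is exactly $2$-dimensional. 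For a unit negative spinor one has $\abs{\beta}^2=2$ exactly, with $\beta$ anti-self-dual. Moreover, $\{e_j\cdot e_k\cdot\phi\}$ spans this complement in a controlled way. With this I would recompute the Cauchy--Schwarz steps in Lemma~\ref{lem1.5} and Lemma~\ref{lem1.6}, where in dimension $4$ the constraint $e_j\cdot R_{e_j}=0$ cuts down the admissible $S$ much more. The aim is to remove the deficit $-\frac{n}{4(n-1)}=-\frac13$ and, hopefully, to improve the gradient coefficient as well.

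\textbf{Closing the argument.} Next I would normalize $u$ by the Hersch balancing argument so that \eqref{eq:barycenter} holds, and use a barycenter-improved Sobolev inequality on $\S^4$, i.e.\ a $4$-dimensional substitute for Theorem~\ref{thm_a_n0}. Such an inequality follows from the spectral gap: once the first spherical harmonics are removed, the second eigenvalue $2(n+1)$ replaces $n$. The goal is to reach
\[
\tfrac52\int\abs{\nabla u}^2+c\int u^2\ge 6\,\omega_4^{1/2}\norm{u}_4^2 ,
\]
with strict inequality. Strictness should come from the equality analysis: equality would force $u$ constant and $F$ anti-self-dual with $\abs{F^+}=0$. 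That contradicts $\norm{F^+}_2=\norm{F^-}_2$ unless $F=0$, and $F=0$ is impossible by \eqref{eq:159.1}. The main obstacle I expect is this lower-bound step, namely obtaining enough from the rank-$2$ Clifford structure and the barycenter inequality in the borderline dimension $4$. If a direct improvement of Lemma~\ref{lem1.6} fails, I would instead keep the term $\int u^2\abs{S}^2$ in \eqref{eq:158} and bound it below using the explicit description of $\Sigma^-$ from Section~\ref{appendix_Clifford}.
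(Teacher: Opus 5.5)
Your upper bound is exactly the paper's ($\<F,\beta\>=\<F^-,\beta\>\le\sqrt2\abs{F^-}$ together with $\int F\w F=0$). The gap is the lower bound, which carries the whole theorem and is never established.

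As you set it up, the closing step needs two things. First, it needs a removal of the deficit $-\frac13$ in \eqref{eq:159.1}, for which you give no mechanism. Second, under the barycenter constraint, it needs $\frac52\int\abs{\nabla u}^2\ge 6\big(\omega_4^{1/2}\norm{u}_4^2-\int u^2\big)$, i.e.\ $\mathfrak{a}_4\ge\frac25$. That is the $n=4$ case of Theorem~\ref{thm_a_n0}, which is not available: Part~2 only treats $n\ge5$, and already the numerical comparison closing the proof of Lemma~\ref{lem2.1} fails at $n=4$.

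Your spectral-gap justification does not work. The eigenvalue $2(n+1)$ only controls the quotient for $u$ close to constants, as in Lemma~\ref{lem:constant}, where it gives $\frac{(n+2)(n-2)}{4}=3$. Two-bubble configurations, however, already force $\mathfrak{a}_4\le2(\sqrt2-1)$. The barycenter condition is a nonlinear constraint on $u^4$, not orthogonality of $u$ to the first spherical harmonics. The global constant is therefore a genuinely hard question, and the paper does not settle it for $n=4$.

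The missing idea is an exact four-dimensional identity. It raises the $n=4$ coefficients of \eqref{eq:159.1} from $(\frac52,\frac{17}{3})$ to $(3,6)$, after which the plain sharp Sobolev inequality suffices and no balancing is needed. Write $S_{e_j}=a_j\,i\phi+z_j\phi_2$ with $a_j=\rIm\<S_{e_j},\phi\>$, and use $e_j\cdot S_{e_j}=0$ in the explicit model. The paper then shows
\[
\mathcal{A}\coloneqq\abs{\nabla^A\phi}^2+\beta(e_j,e_k)\rIm\<\nabla^A_{e_j}\phi,\nabla^A_{e_k}\phi\>-\sum_j\rIm\<\nabla^A_{e_j}\phi,\phi\>^2=2\abs{S}^2-3\sum_ja_j^2\ge0 .
\]
This gives $\sum_j a_j^2\le\frac23\abs{S}^2$ instead of the $\frac34$ of Lemma~\ref{lem1.5}. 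The paper also keeps $\sum_j\rIm\<T_{e_j},\phi\>^2=\frac14u^{-2}\abs{\nabla u}^2$ exactly, instead of discarding it in the Cauchy--Schwarz step \eqref{eq:145}. Inserting both into \eqref{eq:147} and comparing with \eqref{eq54} gives $\frac23\int u^2\abs{S}^2=\int\abs{\nabla u}^2+2\int u^2+\frac43\int u^2\mathcal{A}$. Hence $\int u^2\<F,\beta\>\ge3\big(\int\abs{\nabla u}^2+2\int u^2\big)\ge6\,\omega_4^{1/2}\norm{u}_4^2$. Your fallback of bounding $\int u^2\abs{S}^2$ below via the explicit description of $\Sigma^-$ points this way, but this identity is its entire content.

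Your strictness argument also fails. The upper-bound chain only sees $F^-$, so equality forces $F^-=c\beta$ and says nothing about $F^+$. Meanwhile $\norm{F^+}_2=\norm{F^-}_2$ holds for every exact $F$, so no contradiction arises. The paper argues differently: equality forces $u$ to be constant up to a conformal change. Then $\phi$ is a smooth nowhere-vanishing unit negative spinor, and $\beta$ would be an almost complex structure on $\S^4$, which does not exist.
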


We record the specializations of the identities from Sections~3 and 4 to the four-dimensional situation. First, Lemma~\ref{lem1.2} gives
\eq{\label{eq51}
    \abs{\nabla^A\phi}^2 = \abs{T}^2 + \abs{S}^2 = \frac{1}{3}\abs{\nabla(\log f)}^2 + \abs{S}^2 = \frac{3}{4}u^{-2}\abs{\nabla u}^2 + \abs{S}^2;
}
next, Lemma~\ref{lem1.7} becomes
\eq{\label{eq52}
    \rd^*(u^2\beta) = -2u^2\rIm\<S_{e_j},\phi\>e^j \quad\hbox{for}\quad u=f^{\frac{2}{3}};
}
and Lemma~\ref{lem1.8} becomes
\eq{\label{each term}
    \rd A^\flat(e_j,e_k) = -e_j(\rIm\<\nabla^A_{e_k}\phi,\phi\>) + e_k(\rIm\<\nabla^A_{e_j}\phi,\phi\>) + \frac{1}{2}\beta(e_j,e_k) - 2\rIm\<\nabla^A_{e_j}\phi,\nabla^A_{e_k}\phi\>;
}
Moreover, the integral Schr\"odinger--Lichnerowicz identity \eqref{eq:158} reads
\eq{\label{eq54}
    \int u^2 \<\rd A^\flat,\beta\> = \frac{3}{2}\int\abs{\nabla u}^2 + 3\int u^2 + \int u^2\abs{S}^2;
}
and the integration-by-parts step \eqref{eq:145} becomes
\eq{\label{eq55}
     &\int u^2 \sum_{j<k} \beta(e_j,e_k)\Big(-e_j(\rIm\<\nabla^A_{e_k}\phi,\phi\>) + e_k(\rIm\<\nabla^A_{e_j}\phi,\phi\>)\Big) \\
     &= 2\int u^2\rIm\<S_{e_j},\phi\>\cdot \rIm\<\nabla^A_{e_j}\phi,\phi\> \\
     &= \int u^2\sum_j\rIm\<\nabla^A_{e_j}\phi,\phi\>^2 + \int u^2\sum_j\rIm\<S_{e_j},\phi\>^2 - \int u^2\sum_j\rIm\<T_{e_j},\phi\>^2,
}
where we used the decomposition $\nabla^A_{e_j}\phi=T_{e_j}+S_{e_j}$.

To exploit \eqref{eq54}--\eqref{eq55} we will use a few pointwise identities that depend on the four-dimensional Clifford algebra; see Appendix~\ref{appendix_Clifford} and the chirality discussion at the beginning of Subsection~\ref{sec7.2}.

\begin{lemma}\label{lem_beta}
Under suitable coordinates, we have
\eq{\label{eq55.00}
    \beta = e^1\w e^2 - e^3\w e^4,
}
hence
\eq{
    \abs{\beta}^2 = 2.
}
\end{lemma}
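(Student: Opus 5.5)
The plan is to reduce everything to linear algebra on the $2$-dimensional half-spinor space $\Sigma^-_x$ at a fixed point $x$. The main tool is the chirality relation $\omega\cdot\phi=-\phi$, with $\omega=i^2e_1\cdot e_2\cdot e_3\cdot e_4=-e_1\cdot e_2\cdot e_3\cdot e_4$ as in Subsection~\ref{sec7.2}. I take the explicit Clifford structure of Section~\ref{appendix_Clifford} as a consistency check on signs.

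\emph{Step 1: $\beta$ is anti-self-dual.} From $e_1\cdot e_2\cdot e_3\cdot e_4\cdot\phi=\phi$, multiply on the left by $e_2\cdot e_1$. Since $e_2\cdot e_1\cdot e_1\cdot e_2=1$, this gives
\eq{
    e_3\cdot e_4\cdot\phi=e_2\cdot e_1\cdot\phi=-e_1\cdot e_2\cdot\phi,
}
and hence $\beta(e_3,e_4)=-\beta(e_1,e_2)$. The same computation for the pairs $(13,24)$ and $(14,23)$ gives the analogous relations, with the signs dictated by the orientation. Consequently $\beta\in\Lambda^2_-$ (with the paper's sign convention for $\phi$ negative), i.e.
\eq{
    \beta=\sum_{a=1}^3 c_a\,\eta_a,\qquad \eta_1=e^{12}-e^{34},\quad \eta_2=e^{13}+e^{24},\quad \eta_3=e^{14}-e^{23},
}
where $c_a=\beta(e_1,e_{a+1})$ and $|\eta_a|^2=2$. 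The signs in $\eta_2,\eta_3$ will be fixed by the same computation, but they do not matter for what follows.

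\emph{Step 2: $\sum_a c_a^2=1$.} The three operators $E_a\coloneqq i e_1\cdot e_{a+1}\cdot$ restricted to $\Sigma^-_x\cong\mathbb C^2$ are self-adjoint involutions and pairwise anticommute: for instance, $e_1e_2e_1e_3=-e_1e_3e_1e_2$ follows from the Clifford relations. Hence they form a representation of the Pauli matrices. For a unit vector $\phi\in\mathbb C^2$, the Bloch-sphere identity $\sum_a\langle E_a\phi,\phi\rangle^2=1$ holds. I would verify it either directly from the Pauli matrices, or by noting that $(\sum_a c_aE_a)^2=|c|^2$, so the Hermitian operator $\sum_ac_aE_a$ has eigenvalues $\pm|c|$. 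Then
\eq{
    |c|^2=\sum_a c_a\langle E_a\phi,\phi\rangle\le|c|,
}
with equality because $\phi$ is an eigenvector of the rank-one projection structure on $\mathbb C^2$. Therefore $\sum_ac_a^2=1$, and
\eq{
    |\beta|^2=\sum_a c_a^2|\eta_a|^2=2.
}

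\emph{Step 3: normal form.} Bring $\beta$ to block form $\beta=\lambda_1e^{12}+\lambda_2e^{34}$ by a rotation in $SO(4)$. Since $SO(4)$ preserves $\Lambda^2_-$, the normal form is still anti-self-dual, which forces $\lambda_2=-\lambda_1$. Combined with $|\beta|^2=2$ this gives $\lambda_1=\pm1$. If $\lambda_1=-1$, swap $e_1\leftrightarrow e_2$ and $e_3\leftrightarrow e_4$; this is orientation preserving and yields $\beta=e^1\wedge e^2-e^3\wedge e^4$.

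The main obstacle is Step 2, specifically keeping the sign conventions straight. The anticommutation and the Bloch identity are routine, but one must ensure the restricted operators are genuinely nonzero on $\Sigma^-$ (the self-dual combinations act trivially there). Step 1 is exactly what guarantees this.
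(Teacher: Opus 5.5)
Your proposal is correct, but it takes a genuinely different route from the paper. The paper's proof is a one-line computation in the explicit model of Section~\ref{appendix_Clifford}. It takes a frame in which $\phi=\phi_1=(1,0,0,0)^T$; this implicitly uses that $\mathrm{Spin}(4)$ acts transitively on unit negative spinors. It then reads off from \eqref{eq:B3} that the only nonzero coefficients are $\rIm\langle e_1\cdot e_2\cdot\phi,\phi\rangle=-1$ and $\rIm\langle e_3\cdot e_4\cdot\phi,\phi\rangle=1$.

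You argue intrinsically in an arbitrary oriented frame, in three steps:
\begin{enumerate}
\item Chirality gives anti-self-duality of $\beta$. Your sign computations for all three pairs $(12,34)$, $(13,24)$, $(14,23)$ are right and agree with \eqref{eq:B3}.
\item The Pauli/Bloch identity for the anticommuting involutions $ie_1\cdot e_{a+1}\cdot$ on the two-dimensional space $\Sigma^-_x$ gives $\sum_a c_a^2=1$, hence $|\beta|^2=2$.
\item The $SO(4)$ normal form of a $2$-form finishes the argument.
\end{enumerate}

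What your route buys: it avoids the transitivity statement, and it proves $*\beta=-\beta$ directly. The paper only reads that fact off from \eqref{eq55.00} later, in the proof of Theorem~\ref{thm_n=4}.

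What the paper's route buys: its frame is simultaneously one in which the explicit relations \eqref{eq:B1}--\eqref{eq:B3} hold. Lemmas~\ref{lem3.2} and \ref{lem3.3} use exactly these relations in the same ``suitable coordinates''. Your normal form yields $ie_1\cdot e_2\cdot\phi=\phi$, which fixes $\phi$ up to a phase and would bridge the two frames, but that has to be said.

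One point in Step 2 needs tightening. ``Equality because $\phi$ is an eigenvector of the rank-one projection structure'' is not yet an argument. A clean version:
\begin{itemize}
\item The $E_a$ are traceless, pairwise trace-orthogonal Hermitian operators on $\Sigma^-_x\cong\mathbb C^2$. Hence they form a real basis of the traceless Hermitian operators.
\item Write $2\langle\cdot,\phi\rangle\phi-\mathrm{id}=\sum_a b_aE_a$. Taking the trace against $E_a$ gives $b_a=c_a$.
\item Applying both sides to $\phi$ gives $(\sum_a c_aE_a)\phi=\phi$. Therefore $\sum_a c_a^2=\langle(\sum_a c_aE_a)\phi,\phi\rangle=1$.
\end{itemize}
Your fallback of computing directly with Pauli matrices also works.

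Your closing worry that the $E_a$ might vanish on $\Sigma^-$ is unfounded: they are involutions there. The only role of Step 1 is to reduce $\beta$ to the three coefficients $c_a$.
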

\begin{proof}
By definition,
\eq{
    \beta = -\sum_{1\leq j<k\leq 4}\rIm\<e_j\cdot e_k\cdot\phi,\phi\>e^j\w e^k.
}
In the explicit Clifford model from Appendix~\ref{appendix_Clifford}, the only nonzero coefficients are
\eq{
    \rIm\<e_1\cdot e_2\cdot \phi,\phi\> = -1, \qquad \rIm\<e_3\cdot e_4\cdot \phi,\phi\> = 1.
}
Substituting these values yields \eqref{eq55.00} and hence $\abs{\beta}^2=2$.
\end{proof}

\begin{lemma}\label{lem3.2}
We have
\eq{
    \sum_j\rIm\<T_{e_j},\phi\>^2 = \frac{1}{9}\abs{\nabla(\log f)}^2 = \frac{1}{4}u^{-2}\abs{\nabla u}^2.
}
\end{lemma}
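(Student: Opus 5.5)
Write $X\coloneqq\nabla(\log f)$. By \eqref{eq:138},
\[
T_{e_j}=\frac{1}{3}\bigl(e_j\cdot X\cdot\phi+e_j(\log f)\phi\bigr),
\]
so
\[
\rIm\<T_{e_j},\phi\>=\frac13\rIm\<e_j\cdot X\cdot\phi,\phi\>,
\]
because $\rIm\<e_j(\log f)\phi,\phi\>=0$ (using $\abs{\phi}=1$).

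Expanding $X=X^ke_k$, the diagonal term $k=j$ gives $\<e_j\cdot e_j\cdot\phi,\phi\>=-1$, which is real. For $k\neq j$ the definition of $\beta$ gives $\rIm\<e_j\cdot e_k\cdot\phi,\phi\>=-\beta(e_j,e_k)$. Hence
\[
\rIm\<T_{e_j},\phi\>=-\frac13\sum_k\beta(e_j,e_k)X^k=-\frac13\,(X\ip\beta)(e_j)\cdot(\pm1),
\]
and therefore
\[
\sum_j\rIm\<T_{e_j},\phi\>^2=\frac19\abs{X\ip\beta}^2 .
\]

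Next use Lemma~\ref{lem_beta}. In the adapted frame, $\beta=e^1\w e^2-e^3\w e^4$, so the associated skew endomorphism $J$ satisfies $Je_1=\pm e_2$, $Je_2=\mp e_1$, $Je_3=\mp e_4$, $Je_4=\pm e_3$. This $J$ is an orthogonal complex structure. Consequently $\abs{X\ip\beta}=\abs{JX}=\abs{X}$ for every vector $X$. This is exactly the point where dimension four and chirality enter. In general dimension $\abs{\beta(e_{2m-1},e_{2m})}\le1$, and one only gets $\le$, with a missing direction in odd dimensions. For $n=4$ and a chiral spinor the two blocks both have modulus one and they exhaust $\R^4$.

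Combining the two steps gives the first identity:
\[
\sum_j\rIm\<T_{e_j},\phi\>^2=\frac19\abs{\nabla(\log f)}^2 .
\]
For the second identity, use $u=f^{2/3}$ as in \eqref{eq52}. Then $\nabla u=\frac23u\nabla(\log f)$, so $\abs{\nabla(\log f)}^2=\frac94u^{-2}\abs{\nabla u}^2$, and $\frac19\cdot\frac94=\frac14$.

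The only delicate step is the frame-independence in the second step. Lemma~\ref{lem_beta} gives the normal form of $\beta$ only in suitable coordinates. So I would make sure that $\abs{X\ip\beta}^2$ is computed invariantly, as $\sum_j\bigl(\sum_k\beta_{jk}X^k\bigr)^2$, and then evaluated in the adapted orthonormal frame. Such a frame exists pointwise, since $\phi$ is a unit chiral (pure) spinor in dimension 4, as in Appendix~\ref{appendix_Clifford}. The $\pm$ sign conventions in $\beta_{jk}$ are irrelevant because only squares appear.
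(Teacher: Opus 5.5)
Your proof is correct, but it is organized differently from the paper's. The paper fixes a point and takes a frame with $e_1=\nabla(\log f)/\abs{\nabla(\log f)}$ in which $\phi$ is the model spinor $\phi_1$. It then reads off from the Clifford table \eqref{eq:B3} that $T_{e_1}=0$, $T_{e_2}=\frac13\abs{\nabla(\log f)}\,i\phi$, and $T_{e_3},T_{e_4}\in\mathrm{span}_{\mathbb{C}}\{\phi_2\}$, so only $j=2$ contributes. You instead show, in any dimension, that $\rIm\<T_{e_j},\phi\>=\frac{1}{n-1}\big(\nabla(\log f)\ip\beta\big)(e_j)$; the sign is in fact $+$, though this is irrelevant after squaring. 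This reduces the sum to the frame-invariant quantity $\frac{1}{(n-1)^2}\abs{\nabla(\log f)\ip\beta}^2$. For $n=4$ you then need only the consequence of Lemma~\ref{lem_beta} that $\beta$ is an orthogonal complex structure. Your route relies on a single normal form, that of $\beta$. The paper tacitly uses that $\phi$ can be put in model form while $e_1$ is simultaneously aligned with $\nabla(\log f)$; this is true, since the stabilizer of a unit chiral spinor in $\mathrm{Spin}(4)$ acts transitively on unit vectors. Your route also shows transparently why, in general dimension, one only gets the inequality $\le\frac{1}{(n-1)^2}\abs{\nabla(\log f)}^2$. On the other hand, the paper's explicit computation produces the formulas \eqref{eq55.0}--\eqref{eq55.1} for $T_{e_j}$ in this doubly adapted frame. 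These are reused in \eqref{eq55.4} in the proof of Lemma~\ref{lem3.3}, and your argument does not supply them.
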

\begin{proof}
Fix a point. If $\nabla(\log f)=0$, the claim is immediate; otherwise we choose an orthonormal basis $\{e_1,e_2,e_3,e_4\}$ with $e_1=\frac{\nabla(\log f)}{\abs{\nabla(\log f)}}$. Recalling the definition \eqref{eq:138},
\eq{
    T_{e_j} = \frac{1}{3}\Big(e_j\cdot\nabla(\log f)\cdot\phi + e_j(\log f)\phi\Big) = \frac{1}{3}\abs{\nabla(\log f)}\big(e_j\cdot e_1\cdot\phi + \delta_{j1}\phi\big).
}
Hence
\eq{\label{eq55.0}
    T_{e_1}=0, \quad T_{e_j}=\frac{1}{3}\abs{\nabla(\log f)}\,e_j\cdot e_1\cdot\phi, \quad j=2,3,4.
}
Using \eqref{eq:B3}, we see that
\eq{\label{eq55.1}
    T_{e_2}=\frac{1}{3}\abs{\nabla(\log f)}\,i\phi, \qquad T_{e_3}=\frac{1}{3}\abs{\nabla(\log f)}\,\phi_2, \qquad T_{e_4}=-\frac{1}{3}\abs{\nabla(\log f)}\,i\phi_2.
}
Hence
\eq{
    \sum_j\rIm\<T_{e_j},\phi\>^2 &= \frac{1}{9}\abs{\nabla(\log f)}^2 .
}
The conclusion follows.
\end{proof}

\begin{lemma}\label{lem3.3}
Set
\eq{
    \mathcal{A} \coloneqq \abs{\nabla^A\phi}^2 + \beta(e_j,e_k)\rIm\<\nabla^A_{e_j}\phi,\nabla^A_{e_k}\phi\> - \sum_j\rIm\<\nabla^A_{e_j}\phi,\phi\>^2.
}
Then we have $\mathcal{A} \geq 0$, and
\eq{
    \mathcal{A} = 2\abs{S}^2 - 3\sum_j\rIm\<S_{e_j},\phi\>^2.
}
\end{lemma}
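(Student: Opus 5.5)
Nonnegativity of $\mathcal{A}$ is exactly Lemma~\ref{lem1.6}: the pointwise inequality \eqref{eq:130} rearranges to $\mathcal{A}\ge0$. The work is in the identity, which I will prove pointwise in the adapted frame of Lemma~\ref{lem_beta}, where $\beta=e^1\w e^2-e^3\w e^4$ and $\phi$ is a negative unit spinor. In that frame the middle term of $\mathcal{A}$ becomes
\eq{
    2\,\rIm\<\nabla^A_{e_1}\phi,\nabla^A_{e_2}\phi\>-2\,\rIm\<\nabla^A_{e_3}\phi,\nabla^A_{e_4}\phi\>.
}

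\textbf{Step 1: real coordinates on the tangent space.} Since $\rRe\<\nabla^A_{e_j}\phi,\phi\>=0$ and the Levi-Civita connection preserves chirality, each $\nabla^A_{e_j}\phi$ lies in $\Sigma^-$ and is orthogonal to $\phi$. Now $\Sigma^-$ has real dimension $4$, so the real orthogonal complement of $\phi$ in $\Sigma^-$ is spanned by $i\phi,\phi_2,i\phi_2$, with $\phi_2$ as in Appendix~\ref{appendix_Clifford}. I write
\eq{
    \nabla^A_{e_j}\phi=a_j\,i\phi+b_j\,\phi_2+c_j\,i\phi_2,\qquad a_j=\rIm\<\nabla^A_{e_j}\phi,\phi\>.
}

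\textbf{Step 2: a two-sided computation.} Then $\abs{\nabla^A\phi}^2=\sum_j(a_j^2+b_j^2+c_j^2)$, and each $\rIm\<\nabla^A_{e_j}\phi,\nabla^A_{e_k}\phi\>$ is a bilinear expression in these coefficients. Substituting gives
\eq{
    \mathcal{A}=\sum_j(b_j^2+c_j^2)+2(c_1b_2-b_1c_2)-2(c_3b_4-b_3c_4),
}
with the signs to be fixed by the orientation conventions of the appendix. I would then rewrite this as a sum of squares such as $(b_1+c_2)^2+(c_1-b_2)^2+\dots$, which reproves $\mathcal{A}\ge0$ and identifies $\mathcal{A}$ as the squared norm of the component of $(b_j,c_j)_j$ outside a $4$-dimensional subspace $W\subset\R^8$.

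\textbf{Step 3: identify $W$ with the image of $T$, then pass to $S$.} By \eqref{eq55.0}--\eqref{eq55.1} and the Clifford table \eqref{eq:B3}, for an arbitrary direction $\nabla(\log f)$ the spinor field $T$ has $(b,c)$-part lying in a $3$-dimensional family, plus an $a$-part. Using the constraint $e_j\cdot S_{e_j}=0$ from Lemma~\ref{lem1.4}, $S$ is orthogonal to every field of the form $X\mapsto X\cdot Y\cdot\phi+\<X,Y\>\phi$, and these are exactly the $T$-type fields. Decomposing $\nabla^A\phi=T+S$ and checking that the $T$-directions lie in $W$, I expect $\mathcal{A}$ to depend only on $S$. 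For $S$, the constraint $e_j\cdot S_{e_j}=0$ gives four real linear relations linking the $(b,c)$ coefficients of $S$ to its $a$ coefficients $\alpha_j=\rIm\<S_{e_j},\phi\>$. Plugging these in, the squared $W$-component should become $\tfrac{3}{2}\sum_j\alpha_j^2$, hence
\eq{
    \mathcal{A}=\big(\abs{S}^2-\textstyle\sum_j\alpha_j^2\big)-\tfrac{3}{2}\textstyle\sum_j\alpha_j^2\cdot\dots
}
and, once the normalisation is tracked, $\mathcal{A}=2\abs{S}^2-3\sum_j\alpha_j^2$.

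\textbf{Main obstacle.} The hard part is this bookkeeping: first showing that the $T$-part drops out of $\mathcal{A}$ completely, including the cross terms with $S$; then checking that the Dirac constraint produces exactly the coefficients $2$ and $-3$. It is consistent with Lemma~\ref{lem1.5}, since for $n=4$ that lemma gives $\sum_j\alpha_j^2\le\tfrac34\abs{S}^2$, and then $2\abs{S}^2-3\sum_j\alpha_j^2\ge-\tfrac14\abs{S}^2$, which is compatible with $\mathcal{A}\ge0$ but not sharp. Concretely, I would solve $e_j\cdot S_{e_j}=0$ using the explicit matrices of Appendix~\ref{appendix_Clifford}, with each $S_{e_j}$ written in the basis $i\phi,\phi_2,i\phi_2$, and verify the identity by direct expansion.
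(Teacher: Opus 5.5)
Your plan is, up to presentation, the paper's own proof, and it can be completed. The common ingredients are the Clifford model, the real basis $i\phi,\phi_2,i\phi_2$ of the chiral subspace, the sum-of-squares structure, the fact that $T$ drops out, and the constraint $e_j\cdot S_{e_j}=0$. With $z_j=b_j+ic_j$ the paper writes $\mathcal{A}=\abs{z_1+iz_2}^2+\abs{z_3-iz_4}^2$. So the $12$-block is $(b_1-c_2)^2+(c_1+b_2)^2$; your sample signs belong to the $34$-block. Your two variations are sound. First, $\mathcal{A}\ge0$ is indeed just a rearrangement of \eqref{eq:130}. Second, you show that the $(b,c)$-part of $T$ lies in $W=\{z_1+iz_2=0,\ z_3-iz_4=0\}$ for every $Y=\nabla(\log f)$, which replaces the paper's choice of a frame with $e_1\parallel\nabla(\log f)$.

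The bookkeeping in Step~3, however, is wrong as written. Taken literally, it yields $\mathcal{A}=\abs{S}^2-\frac52\sum_j\alpha_j^2$. There are three errors.

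(i) The $(b,c)$-part of $T$ is not a $3$-dimensional family. For $Y=\sum_k y_ke_k$, \eqref{eq:B3} gives $z^T=\frac13\big(-y_3+iy_4,\,-iy_3-y_4,\,y_1+iy_2,\,-iy_1+y_2\big)$. So $Y\mapsto z^T$ is injective, with image exactly $W$.

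(ii) The map $z\mapsto\frac{1}{\sqrt2}(z_1+iz_2,\,z_1-iz_2,\,z_3-iz_4,\,z_3+iz_4)$ is unitary. Hence $\mathcal{A}$ is \emph{twice} the squared norm of the $W^\perp$-component, not once.

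(iii) The squared $W$-component of $z^S$ is $\frac12\sum_j\alpha_j^2$, not $\frac32\sum_j\alpha_j^2$. Your own orthogonality remark gives this directly. Set $\tau^Y_X=X\cdot Y\cdot\phi+\<X,Y\>\phi$. Its $i\phi$-part is $a^Y=(-y_2,y_1,y_4,-y_3)$, so $\abs{a^Y}=\abs{Y}$. Its $(b,c)$-parts $w^Y$ satisfy $\abs{w^Y}^2=2\abs{Y}^2$ and span $W$. Orthogonality gives $0=\sum_j\rRe\<S_{e_j},\tau^Y_{e_j}\>=\alpha\cdot a^Y+\<z^S,w^Y\>$. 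Equivalently, as in the paper, $z_1-iz_2=-i(\alpha_3-i\alpha_4)$ and $z_3+iz_4=i(\alpha_1+i\alpha_2)$.

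With these corrections,
\[
\mathcal{A}=2\Big(\abs{S}^2-\sum_j\alpha_j^2-\frac12\sum_j\alpha_j^2\Big)=2\abs{S}^2-3\sum_j\alpha_j^2,
\]
and your argument closes.
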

\begin{proof}
 Appendix~\ref{appendix_Clifford} shows that $\{\phi_1=\phi,\phi_2\}$ spans one chiral subspace, and $\{\phi_3,\phi_4\}$ spans the other.

Note that $S_{e_j}\perp\phi$, and the $i\phi$-component is
\eq{\label{eq55.2}
    \rRe\<S_{e_j},i\phi\> = \rIm\<S_{e_j},\phi\> \eqcolon a_j \in\R.
}
Hence we may decompose
\eq{\label{eq55.2.1}
    S_{e_j} = a_j\,i\phi + z_j\phi_2, \quad z_j\in\mathbb{C}.
}
Then the Clifford trace-free condition (recall Lemma~\ref{lem1.4}(3)) implies 
\eq{
    0 &= e_j\cdot S_{e_j} = ia_je_j\cdot\phi + z_je_j\cdot\phi_2 \\
    &= i(-a_1\phi_4-a_2i\phi_4-a_3\phi_3+a_4i\phi_3)+(-z_1\phi_3+z_2i\phi_3+z_3\phi_4+z_4i\phi_4) \\
    &= (-ia_3-a_4-z_1+iz_2)\phi_3 + (-ia_1+a_2+z_3+iz_4)\phi_4,
}
where we used \eqref{eq:B1} and \eqref{eq:B2}. Hence
\eq{
    -ia_3-a_4-z_1+iz_2=0, \qquad -ia_1+a_2+z_3+iz_4=0,
}
or equivalently
\eq{\label{eq55.2.2}
    z_1-iz_2 = -i(a_3-ia_4), \qquad z_3+iz_4 = i(a_1+ia_2).
}

Note that
\eq{
    \sum_j \abs{ \nabla^A_{e_j}\phi - \rIm\<\nabla^A_{e_j}\phi,\phi\>i\phi }^2 = \abs{\nabla^A\phi}^2 - \sum_j\rIm\<\nabla^A_{e_j}\phi,\phi\>^2.
}
Since $\nabla^A_{e_j}\phi\perp\phi$, we have
\eq{
    \rIm\<\nabla^A_{e_1}\phi,\nabla^A_{e_2}\phi\> = \rIm\<\nabla^A_{e_1}\phi - \rIm\<\nabla^A_{e_1}\phi,\phi\>i\phi,\nabla^A_{e_2}\phi - \rIm\<\nabla^A_{e_2}\phi,\phi\>i\phi\>,
}
and
\eq{
    \rIm\<\nabla^A_{e_3}\phi,\nabla^A_{e_4}\phi\> = \rIm\<\nabla^A_{e_3}\phi - \rIm\<\nabla^A_{e_3}\phi,\phi\>i\phi,\nabla^A_{e_4}\phi - \rIm\<\nabla^A_{e_4}\phi,\phi\>i\phi\>.
}
Hence Lemma~\ref{lem_beta} implies that
\eq{\label{eq55.3}
    \mathcal{A} &= \abs{\nabla^A\phi}^2 + \beta(e_j,e_k)\rIm\<\nabla^A_{e_j}\phi,\nabla^A_{e_k}\phi\> - \sum_j\rIm\<\nabla^A_{e_j}\phi,\phi\>^2 \\
    &= \abs{\nabla^A\phi}^2 + 
    2\rIm\<\nabla^A_{e_1}\phi,\nabla^A_{e_2}\phi\> - 2\rIm\<\nabla^A_{e_3}\phi,\nabla^A_{e_4}\phi\> - \sum_j\rIm\<\nabla^A_{e_j}\phi,\phi\>^2 \\
    &= \sum_j \abs{ \nabla^A_{e_j}\phi - \rIm\<\nabla^A_{e_j}\phi,\phi\>i\phi }^2 + 2\rIm\<\nabla^A_{e_1}\phi - \rIm\<\nabla^A_{e_1}\phi,\phi\>i\phi,\nabla^A_{e_2}\phi - \rIm\<\nabla^A_{e_2}\phi,\phi\>i\phi\> \\
    &\quad - 2\rIm\<\nabla^A_{e_3}\phi - \rIm\<\nabla^A_{e_3}\phi,\phi\>i\phi,\nabla^A_{e_4}\phi - \rIm\<\nabla^A_{e_4}\phi,\phi\>i\phi\> \\
    &= \Abs{ \big(\nabla^A_{e_1}\phi - \rIm\<\nabla^A_{e_1}\phi,\phi\>i\phi\big) + i\big(\nabla^A_{e_2}\phi - \rIm\<\nabla^A_{e_2}\phi,\phi\>i\phi\big) }^2 \\
    &\quad + \Abs{ \big(\nabla^A_{e_3}\phi - \rIm\<\nabla^A_{e_3}\phi,\phi\>i\phi\big) - i\big(\nabla^A_{e_4}\phi - \rIm\<\nabla^A_{e_4}\phi,\phi\>i\phi\big) }^2 \geq 0.
}
Moreover, since
\eq{
    \nabla^A_{e_j}\phi - \rIm\<\nabla^A_{e_j}\phi,\phi\>i\phi = S_{e_j} + T_{e_j} - \rIm\<S_{e_j},\phi\>i\phi - \rIm\<T_{e_j},\phi\>i\phi,
}
using \eqref{eq55.0}, \eqref{eq55.1}, \eqref{eq55.2}, and \eqref{eq55.2.1} we have
\eq{\label{eq55.4}
    \nabla^A_{e_1}\phi - \rIm\<\nabla^A_{e_1}\phi,\phi\>i\phi &= S_{e_1} - a_1i\phi = z_1\phi_2 \\
    \nabla^A_{e_2}\phi - \rIm\<\nabla^A_{e_2}\phi,\phi\>i\phi &= S_{e_2} - a_2i\phi = z_2\phi_2 \\
    \nabla^A_{e_3}\phi - \rIm\<\nabla^A_{e_3}\phi,\phi\>i\phi &= S_{e_3} - a_3i\phi + \frac{1}{3}\abs{\nabla(\log f)}\phi_2 = z_3\phi_2 + \frac{1}{3}\abs{\nabla(\log f)}\phi_2 \\
    \nabla^A_{e_4}\phi - \rIm\<\nabla^A_{e_4}\phi,\phi\>i\phi &= S_{e_4} - a_4i\phi - \frac{1}{3}\abs{\nabla(\log f)}i\phi_2 = z_4\phi_2 - \frac{1}{3}\abs{\nabla(\log f)}i\phi_2.
}
Combining \eqref{eq55.3} and \eqref{eq55.4} gives
\eq{\label{eq55.5}
    \mathcal{A} = \abs{ z_1+iz_2 }^2 + \abs{ z_3-iz_4 }^2.
}
Since $a_j\in\R$, by \eqref{eq55.2.2} we have
\eq{\label{eq55.6}
    \abs{z_1-iz_2}^2 + \abs{z_3+iz_4}^2 = \abs{a_3-ia_4}^2 + \abs{a_1+ia_2}^2 = \sum_j a_j^2.
}
Finally, \eqref{eq55.2.1} implies that
\eq{\label{eq55.7}
    \abs{S}^2 = \sum_j\abs{S_{e_j}}^2 =  \sum_j\abs{a_j\,i\phi+z_j\phi_2}^2 = \sum_ja_j^2 + \sum_j\abs{z_j}^2.
}
Using the elementary identity
\eq{
    \abs{ z_1+iz_2 }^2 + \abs{ z_3-iz_4 }^2 + \abs{z_1-iz_2}^2 + \abs{z_3+iz_4}^2 = 2\sum_j\abs{z_j}^2,
}
we have by \eqref{eq55.5}, \eqref{eq55.6}, and \eqref{eq55.7} that
\eq{
    \mathcal{A} = 2\sum_j\abs{z_j}^2 - \sum_j a_j^2 = 2\abs{S}^2 - 3\sum_ja_j^2.
}
Hence we complete the proof.
\end{proof}

We now prove Theorem~\ref{thm_n=4}.

\begin{proof}[Proof of Theorem~\ref{thm_n=4}]
Using $\abs{\beta}^2=2$ from Lemma~\ref{lem_beta}, together with \eqref{eq51}, \eqref{eq:147}, \eqref{eq55}, Lemma~\ref{lem3.2}, and Lemma~\ref{lem3.3}, we obtain
\eq{\label{eq56}
    \int u^2\<\rd A^\flat,\beta\> &= \int u^2\sum_j\rIm\<\nabla^A_{e_j}\phi,\phi\>^2 + \int u^2\sum_j\rIm\<S_{e_j},\phi\>^2 - \int u^2\sum_j\rIm\<T_{e_j},\phi\>^2 \\
    &\quad + \int u^2 - \int u^2\beta(e_j,e_k)\rIm\<\nabla^A_{e_j}\phi,\nabla^A_{e_k}\phi\> \\
    &= \frac{2}{3}\int u^2\abs{S}^2 - \frac{1}{3}\int u^2\mathcal{A} - \frac{1}{4}\int \abs{\nabla u}^2 + \int u^2 - \int u^2\mathcal{A} + \int u^2\abs{\nabla^A\phi}^2 \\
    &= \frac{1}{2}\int \abs{\nabla u}^2 + \int u^2 - \frac{4}{3}\int u^2\mathcal{A} + \frac{5}{3}\int u^2\abs{S}^2.
}
Lemma~\ref{lem3.3} gives $\mathcal{A}\geq0$. Substituting \eqref{eq56} into \eqref{eq54} therefore yields
\eq{
    \frac{2}{3}\int u^2\abs{S}^2 = \int \abs{\nabla u}^2 + 2\int u^2 + \frac{4}{3}\int u^2\mathcal{A} \geq \int \abs{\nabla u}^2 + 2\int u^2.
}
Combining this with \eqref{eq54}, we obtain
\eq{\label{eq57}
    \int u^2 \<\rd A^\flat,\beta\> \geq 3\Big(\int\abs{\nabla u}^2 + 2\int u^2\Big)
    \geq 6\,\omega_4^{\frac{1}{2}}\norm{u}_4^2,
}
where the last step uses the sharp critical Sobolev inequality on $\S^4$.

Finally, we claim that
\eq{\label{eq58}
    \int u^2 \<\rd A^\flat,\beta\> \leq \norm{\rd A^\flat}_2\norm{u}_4^2,
}
which is strictly stronger than \eqref{eq:160}. In fact, under the standard orientation, \eqref{eq55.00} implies that
\eq{
    *\beta = -\beta,
}
which means $\beta$ is a $-1$ eigenform of the Hodge star. We decompose $\rd A^\flat$ by
\eq{
    \rd A^\flat = (\rd A^\flat)^+ + (\rd A^\flat)^-,
}
where $(\rd A^\flat)^\pm$ is the $\pm1$ eigen-component. It follows
\eq{
    0 = \int \rd A^\flat\w\rd A^\flat = \norm{(\rd A^\flat)^+}_2^2 - \norm{(\rd A^\flat)^-}_2^2.
}
Hence
\eq{
    \norm{(\rd A^\flat)^+}_2^2 = \norm{(\rd A^\flat)^-}_2^2 = \frac{1}{2}\norm{\rd A^\flat}_2^2.
}
Since $\abs{\beta}^2=2$, we may estimate
\eq{
    \int u^2 \<\rd A^\flat,\beta\> = \int u^2 \<(\rd A^\flat)^-,\beta\>
    \leq \sqrt{2}\int u^2\abs{(\rd A^\flat)^-}
    \leq \sqrt{2}\norm{(\rd A^\flat)^-}_2\norm{u}_4^2
    = \norm{\rd A^\flat}_2\norm{u}_4^2.
}
Together with \eqref{eq57}, this proves \eqref{eq:4D}.

Finally, the inequality is strict. Indeed, equality would force $u$ to be constant modulo conformal transformations, hence $u>0$ everywhere, and force $\phi$ to be nowhere vanishing and hence smooth. Then \eqref{eq55.00} would imply that $\beta$ defines a smooth almost complex structure on $\S^4$, which is impossible. Therefore \eqref{eq:4D} is strict.
\end{proof}

\subsection{Regularization}\label{sec6.5}

The arguments above were carried out under the simplifying assumption that $\varphi$ is nowhere vanishing. We now briefly justify how to pass to the general case when $\varphi$ may have zeros.

Since $f=\abs{\varphi}\in W^{1,2}$, we have $\nabla f=0$ almost everywhere on the zero set $\{f=0\}$. Hence all pointwise identities extend across $\{f=0\}$ in a trivial way. The remaining issue is to justify the integral Schr\"odinger--Lichnerowicz identity \eqref{eq:158}. For this we introduce the regularization used in \cite{Frank_Loss_2024}
\eq{
    f_\epsilon\coloneqq \sqrt{\abs{\varphi}^2+\epsilon^2}=\sqrt{f^2+\epsilon^2},\quad u_\epsilon\coloneqq f_\epsilon^{\frac{n-2}{n-1}},\qquad \epsilon>0.
}
Using the zero mode equation and testing the pointwise Schr\"odinger--Lichnerowicz formula \eqref{eq:S-L} against $f_\epsilon^{-\frac{2}{n-1}}\varphi$, we obtain
\eq{\label{eq1:regularization}
    \int f_\epsilon^{-\frac{2}{n-1}}\rRe\<i\rd A^\flat\cdot\varphi,\varphi\> = \int \rRe\<\nabla^A\varphi,\nabla^A(f_\epsilon^{-\frac{2}{n-1}}\varphi)\> + \frac{n(n-1)}{4}\int f^2f_\epsilon^{-\frac{2}{n-1}}.
}
Since
\eq{
    \nabla^A_X(f_\epsilon^{-\frac{2}{n-1}}\varphi) = -\frac{2}{n-1}f_\epsilon^{-\frac{2n}{n-1}}fX(f)\varphi + f_\epsilon^{-\frac{2}{n-1}}\nabla^A_X\varphi,
}
we have
\eq{
    \rRe\<\nabla^A\varphi,\nabla^A(f_\epsilon^{-\frac{2}{n-1}}\varphi)\> &= -\frac{2}{n-1}f_\epsilon^{-\frac{2n}{n-1}}f^2\abs{\nabla f}^2 + f_\epsilon^{-\frac{2}{n-1}}\abs{\nabla^A\varphi}^2 \\
    &= -\frac{2}{n-1}f_\epsilon^{-\frac{2n}{n-1}}f^2\abs{\nabla f}^2 + f_\epsilon^{-\frac{2}{n-1}}\Big(\frac{n}{n-1}\abs{\nabla f}^2+f^2\abs{S}^2\Big) \\
    &= \Big(\frac{n-2}{n-1}f_\epsilon^{-\frac{2}{n-1}} + \frac{2}{n-1}\epsilon^2f_\epsilon^{-\frac{2n}{n-1}} \Big)\abs{\nabla f}^2 + f_\epsilon^{-\frac{2}{n-1}}f^2\abs{S}^2.
}
where we used \eqref{eq:6}. Combining with
\eq{
    \rRe\<i\rd A^\flat\cdot\varphi,\varphi\> = f^2\<\rd A^\flat,\beta\>,
}
we see that \eqref{eq1:regularization} becomes
\eq{\label{eq2:regularization}
    \int f_\epsilon^{-\frac{2}{n-1}}f^2\<\rd A^\flat,\beta\> &= \int \Big(\frac{n-2}{n-1}f_\epsilon^{-\frac{2}{n-1}} + \frac{2}{n-1}\epsilon^2f_\epsilon^{-\frac{2n}{n-1}} \Big)\abs{\nabla f}^2 \\
    &\quad + \int f_\epsilon^{-\frac{2}{n-1}}f^2\abs{S}^2 + \frac{n(n-1)}{4}\int f_\epsilon^{-\frac{2}{n-1}}f^2.
}

First, by the pointwise bound $\abs{\beta}^2\leq\big[\tfrac{n}{2}\big]$,
\eq{
    f_\epsilon^{-\frac{2}{n-1}}f^2\<\rd A^\flat,\beta\> \leq f^{-\frac{2}{n-1}}f^2\cdot\abs{\rd A^\flat}\cdot\Big[\frac{n}{2}\Big]^{\frac 12} = \Big[\frac{n}{2}\Big]^{\frac 12} u^2\abs{\rd A^\flat}.
}
Therefore,
\eq{
    \int f_\epsilon^{-\frac{2}{n-1}}f^2\<\rd A^\flat,\beta\> \leq \Big[\frac{n}{2}\Big]^{\frac 12} \norm{\rd A^\flat}_{\frac{n}{2}}\norm{u}_{\frac{2n}{n-2}}^2.
}
Since each term on the right-hand side of \eqref{eq2:regularization} is nonnegative, Fatou's lemma implies
\eq{\label{eq3:regularization}
    \int_{\{f>0\}} f^{-\frac{2}{n-1}}\abs{\nabla f}^2 < \infty, \qquad \int_{\{f>0\}} u^2\abs{S}^2 < \infty.
}
Note that
\eq{
    \nabla u_\epsilon = \frac{n-2}{n-1}f_\epsilon^{-\frac{n}{n-1}}f\nabla f,
}
hence
\eq{\label{eq4:regularization}
    \abs{\nabla u_\epsilon}^2 \leq \frac{(n-2)^2}{(n-1)^2}f^{-\frac{2}{n-1}}\abs{\nabla f}^2 \qquad\hbox{a.e. on}\quad \{f>0\}.
}
Since $u_\epsilon\to u$ uniformly as $\epsilon\to0$, by \eqref{eq3:regularization} and \eqref{eq4:regularization}, we know that $u_\epsilon\to u$ strongly in $W^{1,2}(\S^n)$, and
\eq{\label{eq5:regularization}
    \abs{\nabla u}^2 = \frac{(n-2)^2}{(n-1)^2}f^{-\frac{2}{n-1}}\abs{\nabla f}^2 \qquad\hbox{a.e. on}\quad \{f>0\}.
}

Second, since
\eq{
    \frac{\epsilon^2f_\epsilon^{-\frac{2n}{n-1}}}{f^{-\frac{2}{n-1}}} = \frac{(f/\epsilon)^{\frac{2}{n-1}}}{\big(1+(f/\epsilon)^2\big)^{\frac{n}{n-1}}} \leq 1 \qquad\hbox{on}\quad \{f>0\},
}
and it converges to $0$ pointwise on $\{f>0\}$, using \eqref{eq3:regularization} and dominated convergence, we have
\eq{
    \int \epsilon^2f_\epsilon^{-\frac{2n}{n-1}}\abs{\nabla f}^2 \to 0.
}

Third, by dominated (and monotone) convergence, the remaining terms in \eqref{eq2:regularization} converge as $\epsilon\to0$. Passing to the limit yields
\eq{
    \int u^2 \<\rd A^\flat,\beta\> = \frac{n-1}{n-2}\int\abs{\nabla u}^2 + \frac{n(n-1)}{4}\int u^2 + \int u^2\abs{S}^2,
}
which is precisely \eqref{eq:158}.

Finally, a standard cut-off argument justifies integrations by parts near $\{f=0\}$. Note also that the presence of a zero set does not affect the equality case, since equality forces $f\equiv{\rm const}>0$.

\subsection{Case \texorpdfstring{$n=3$}{n=3}}
For convenience of the reader and for completeness, we very briefly sketch the proof for $n=3$ in \cite{WZ26curl}. For details, see \cite{WZ26curl}.

\

\noindent\textit{Sketch of the proof for $n=3$.} The goal is to prove the sharp bound
\eq{\label{eq:FL-sharp-sketch}
    \|\curl A\|_{\frac32}\ge 3\,\omega_3^{\frac23}
}
whenever there exists a nontrivial solution $\varphi\not\equiv0$ of $\D\varphi=iA\cdot\varphi$ on $\S^3$. The existence of its minimizer was proved in \cite{FL1}.
Write $f\coloneqq|\varphi|$ and $\phi\coloneqq f^{-1}\varphi$ on $\{f>0\}$, so $|\phi|\equiv1$.
A dimension--$3$ spinorial algebra identity yields a decomposition of $\nabla^A\varphi$ into orthogonal pieces and the pointwise identity
\eq{\label{eq:Kato-id-sketch}
    |\nabla^A\varphi|^2=\frac32|\nabla f|^2+f^2|S|^2,
}
where $S$ is a trace-free symmetric tensor (so the remainder term $f^2|S|^2$ is nonnegative but cannot be discarded without losing sharpness).
Next, define the unit vector field $\xi\coloneqq\rRe\langle i\phi,e_j\cdot\phi\rangle e_j$ (so $\xi\cdot\phi=i\phi$) and the $1$-form $\alpha\coloneqq f\,\xi^\flat$.
A direct computation relates $S$ to $\rd\alpha$ and gives
\eq{\label{eq:S-vs-dalpha-sketch}
    f^2|S|^2\ge \frac{3}{8}|\rd\alpha|^2.
}

Integrating the Schr\"odinger--Lichnerowicz formula for $\D^A$ against $\varphi$ and using \eqref{eq:Kato-id-sketch}, one derives an inequality of the form
\eq{\label{eq:main-ineq-sketch}
    \int_{\S^3} f\,|\curl A|\ge \frac32\,\omega_3^{\frac23}\|f\|_3+\int_{\S^3} f\,|S|^2.
}
By H\"older,
$\int f|\curl A|\le \|f\|_3\,\|\curl A\|_{3/2}$, so it remains to show that the remainder term contributes at the same scale:
\eq{\label{eq:claim-sketch}
    \|f\|_3^{-1}\int_{\S^3} f\,|S|^2\ge \frac32\,\omega_3^{\frac23}.
}
To prove \eqref{eq:claim-sketch}, consider the conformal metric $\tilde g\coloneqq f^2g_{\rm st}$ and apply the sharp conformal lower bound for the first positive curl eigenvalue in $[g_{\rm st}]$; this eigenvalue bound is equivalent to (and can be proved from) the sharp curl--Sobolev inequality (Theorem 1.1 in \cite{WZ26curl}).
Using the Rayleigh quotient for the curl operator with test form $\alpha$ (and the fact that $\rd^*(f\alpha)=0$, which follows from the zero mode equation), one obtains a lower bound for $\int f^{-1}|\rd\alpha|^2$ in terms of $\|f\|_3$.
Combining this with \eqref{eq:S-vs-dalpha-sketch} yields \eqref{eq:claim-sketch}, and inserting \eqref{eq:claim-sketch} into \eqref{eq:main-ineq-sketch} gives \eqref{eq:FL-sharp-sketch}.
The equality case follows by tracing back the equalities in the Cauchy--Schwarz, Sobolev, and curl-eigenvalue estimates, forcing $f$ to be constant (modulo conformal transformations) and hence $(\varphi,A)$ to arise from a Killing spinor and its associated Reeb field (modulo gauge and conformal transformations).

\section{Appendix. The Clifford algebra on \texorpdfstring{$\R^4$}{R4}}
\label{appendix_Clifford}

Let $\phi$ be a unit chiral spinor on $\R^4$. Since $\Sigma\R^4\cong\mathbb{C}^4$, without loss of generality we may assume
\eq{
    \phi = \phi_1 = (1,0,0,0)^T.
}
Define Pauli matrices by
\eq{
    \sigma_1\coloneqq\begin{pmatrix} 0 & 1 \\ 1 & 0 \end{pmatrix}, \qquad \sigma_2\coloneqq\begin{pmatrix} 0 & -i \\ i & 0 \end{pmatrix}, \qquad \sigma_3\coloneqq\begin{pmatrix} 1 & 0 \\ 0 & -1 \end{pmatrix},
}
and set
\eq{
    e_1\cdot \coloneqq \begin{pmatrix} 0 & \sigma_1 \\ -\sigma_1 & 0 \end{pmatrix}, \quad e_2\cdot \coloneqq \begin{pmatrix} 0 & \sigma_2 \\ -\sigma_2 & 0 \end{pmatrix}, \quad e_3\cdot \coloneqq \begin{pmatrix} 0 & \sigma_3 \\ -\sigma_3 & 0 \end{pmatrix}, \quad e_4\cdot \coloneqq \begin{pmatrix} 0 & iI_2 \\ iI_2 & 0 \end{pmatrix},
}
where $I_2$ is the $2\times2$ identity matrix. 

By direct computation, one checks that
\eq{
    e_j\cdot e_k\cdot + \, e_k\cdot e_j\cdot = -2\delta_{jk}, \quad 1\leq j,k\leq 4.
}
Hence $\{e_1,e_2,e_3,e_4\}$ forms a representation of the Clifford algebra. If we denote
\eq{
    \phi_2\coloneqq (0,1,0,0)^T, \qquad \phi_3\coloneqq (0,0,1,0)^T, \qquad \phi_4\coloneqq (0,0,0,1)^T,
}
then $\{\phi_1,\phi_2,\phi_3,\phi_4\}$ forms a complex orthonormal basis of $\Sigma\R^4$. Direct computation shows that
\eq{\label{eq:B1}
    e_1\cdot\phi_1 = -\phi_4, \quad e_2\cdot\phi_1 = -i\phi_4, \quad e_3\cdot\phi_1 = -\phi_3, \quad e_4\cdot\phi_1 = i\phi_3,
}
and
\eq{\label{eq:B2}
    e_1\cdot\phi_2 = -\phi_3, \quad e_2\cdot\phi_2 = i\phi_3, \quad e_3\cdot\phi_2 = \phi_4, \quad e_4\cdot\phi_2 = i\phi_4.
}
Moreover,
\eq{\label{eq:B3}
    e_1\cdot e_2\cdot\phi_1 &= -i\phi_1, \quad e_1\cdot e_3\cdot\phi_1 = -\phi_2, \quad e_1\cdot e_4\cdot\phi_1 = i\phi_2, \\
    e_2\cdot e_3\cdot\phi_1 &= -i\phi_2, \quad e_2\cdot e_4\cdot\phi_1 = -\phi_2, \quad e_3\cdot e_4\cdot\phi_1 = i\phi_1.
}
By \eqref{eq:B3} we see that $\phi=\phi_1$ is a negative spinor, since
\eq{
    e_1\cdot e_2\cdot e_3\cdot e_4\cdot\phi = \phi.
}

\part{The improved Sobolev inequality under barycenter constraint}\label{part2}

This part is devoted to a new improved Sobolev inequality under barycenter constraint. It could be viewed as an independent paper.

\section{Aubin's improved Sobolev inequality}
\label{sec:aubin-improved-sobolev}

In this part, we first recall the following optimal Sobolev inequality, due to Aubin \cite{Aubin79} and Talenti \cite{Talenti76}.

\begin{theorem}\label{thm:sharp-sobolev-aubin-talenti}
Let $n\ge 3$ and $2^*:=\frac{2n}{n-2}$. For every $u\in \dot{W}^{1,2}(\R^n)$,
\eq{\label{eq:sharp-sobolev-Rn}
\Big(\int_{\R^n} |u|^{2^*}\,\rd x\Big)^{\frac{n-2}{n}}\le S_n^{-1}\int_{\R^n} |\nabla u|^2\,\rd x,
}
where $S_n=\frac{n(n-2)}{4}\,\omega_n^{\frac{2}{n}}$ is optimal (here $\omega_n=|\mathbb S^n|$).
Moreover, equality holds if and only if $u\equiv 0$ or
\[
 u(x)=c\,\Big(\frac{\lambda}{1+\lambda^2|x-x_0|^2}\Big)^{\frac{n-2}{2}}
\]
for some constants $c\in\R\setminus\{0\}$, $\lambda>0$ and $x_0\in\R^n$.
\end{theorem}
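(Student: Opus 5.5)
The plan is to prove Theorem~\ref{thm:sharp-sobolev-aubin-talenti} by the mass-transport method of Cordero-Erausquin--Nazaret--Villani, and to obtain the equality case by a separate classification argument for the Euler--Lagrange equation. By density and homogeneity we may restrict to $u\ge0$ (replace $u$ by $|u|$, which has the same $\dot W^{1,2}$ seminorm and $L^{2^*}$ norm) and normalize $\|u\|_{2^*}=\|h\|_{2^*}=1$, where
\[
h(x)=c_n(1+|x|^2)^{-\frac{n-2}{2}}
\]
is the bubble. We will prove the inequality in the equivalent dual form
\[
\int u^{2^*}\cdots\le C\Big(\int|\nabla u|^2\Big)^{1/2},
\]
with the constant attained by $h$.

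\textbf{Step 1 (transport map).} Let $F=u^{2^*}$ and $G=h^{2^*}$, both probability densities. Brenier's theorem gives a convex $\Phi$ with $\nabla\Phi_\#F=G$. McCann's regularity theory gives the Monge--Amp\`ere equation $F=G(\nabla\Phi)\det D^2_A\Phi$ almost everywhere, where $D^2_A\Phi$ is the Alexandrov Hessian.

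\textbf{Step 2 (change of variables).} Raise the Monge--Amp\`ere equation to the power $-1/n$ with weight. This gives
\[
\int G^{1-\frac1n}=\int F^{1-\frac1n}\det(D^2_A\Phi)^{1/n}.
\]
Apply the arithmetic--geometric mean inequality $\det^{1/n}\le\frac1n\Delta_A\Phi\le\frac1n\Delta_{\mathcal D'}\Phi$ (distributional Laplacian, since the singular part is nonnegative). Then integrate by parts against $F^{1-\frac1n}=u^{2(n-1)/(n-2)}$, whose gradient is a multiple of $u^{n/(n-2)}\nabla u$. Cauchy--Schwarz then yields
\[
\int h^{2(n-1)/(n-2)}\le C_n\Big(\int|\nabla u|^2\Big)^{1/2}\Big(\int u^{2^*}|\nabla\Phi|^2\Big)^{1/2}.
\]
The last factor equals $\int h^{2^*}|y|^2\,dy$ by the push-forward. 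Every step is an equality when $u=h$, because then $\nabla\Phi=\mathrm{id}$. This identifies the constant as $S_n$, and the value $S_n=\frac{n(n-2)}{4}\omega_n^{2/n}$ follows from a Beta-function computation, or more cleanly by pulling $h$ back to the constant function on $\S^n$ via stereographic projection.

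\textbf{Step 3 (technical justification).} The integration by parts with only Alexandrov regularity is the delicate point. I would handle it by first taking $u$ smooth with compact support, so that $\nabla\Phi$ maps into $\mathbb{R}^n$ with the supports controlled, and then approximate in $\dot W^{1,2}$.

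\textbf{Step 4 (equality case).} This is the step I expect to be the main obstacle. Tracking equalities through Step 2 only formally forces $D^2\Phi=\lambda I$; making this rigorous requires Sobolev regularity of $\Phi$, which is not free. I would therefore argue separately. An extremal $u$ satisfies, after scaling, $-\Delta u=u^{\frac{n+2}{n-2}}$ with $u>0$ by the strong maximum principle (using $|u|$ also extremal). Elliptic regularity (Brezis--Kato) makes $u$ smooth with decay $u\sim|x|^{2-n}$, since its Kelvin transform is again an extremal. Then either the moving-plane method (Gidas--Ni--Nirenberg, Caffarelli--Gidas--Spruck) gives radial symmetry about some $x_0$ and the radial ODE forces the bubble, or we pull back to $\S^n$ and apply Obata's theorem to the conformal metric $u^{4/(n-2)}|dx|^2$, which has constant scalar curvature and is conformal to the round metric, so it is round. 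Either route yields exactly the family $c\big(\lambda/(1+\lambda^2|x-x_0|^2)\big)^{(n-2)/2}$. Conversely, all members of this family attain equality by translation and dilation invariance and the computation for $h$.
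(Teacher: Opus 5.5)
The paper gives no proof of this statement. It recalls it as the classical theorem of Aubin and Talenti and then uses it through its equivalent form on $\S^n$. Those classical proofs use Schwarz symmetrization (P\'olya--Szeg\H{o}) to reduce to radially decreasing functions, followed by a one-dimensional Bliss-type inequality. The full ``if and only if'' statement then needs extra input: the equality case of P\'olya--Szeg\H{o} (Brothers--Ziemer), or a PDE/conformal argument.

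Your route is genuinely different and correct. The Cordero-Erausquin--Nazaret--Villani transport argument avoids rearrangement altogether and produces the sharp constant and the extremal $h$ together. It would also carry over unchanged to $p\neq 2$ and to Gagliardo--Nirenberg inequalities. As you rightly anticipate, however, it does not give rigidity without regularity of the Brenier potential. Pairing it with the Euler--Lagrange classification is therefore the right design: sign via $\abs{u}$, Brezis--Kato regularity, decay via the Kelvin transform, then Gidas--Ni--Nirenberg or Obata. The Obata variant fits especially well with the paper, since it is exactly the equality statement of the $\S^n$ version.

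One point in Step 3 needs a better justification than the one you give. Taking $u$ smooth with compact support does not by itself make the integration by parts harmless. With the full-support target $G=h^{2^*}$, the map $\nabla\Phi$ must be unbounded on the compact set $\mathrm{supp}\,F$. Hence $\Phi$ cannot be finite on a neighbourhood of it, and a boundary term at $\partial\{\Phi<\infty\}$ has to be controlled.

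A clean fix is to prove the two-function inequality $\int g^{\frac{2(n-1)}{n-2}}\le\frac{2(n-1)}{n(n-2)}\norm{\nabla u}_2\big(\int\abs{y}^2g^{2^*}\,\rd y\big)^{1/2}$ for every compactly supported $g\ge0$ with $\norm{g}_{2^*}=1$. For such $g$, $\Phi$ can be chosen globally Lipschitz, and $\Delta_{\mathcal{D}'}\Phi$ is a nonnegative Radon measure on $\R^n$. Integrating by parts against the Lipschitz, compactly supported $u^{2(n-1)/(n-2)}$ is then immediate. Afterwards, pass to $g=h$ using normalized truncations of $h$ and monotone convergence. Finally, verify directly that $u=g=h$ gives equality: there $\nabla\Phi=\mathrm{id}$, and $\nabla h$ is a negative multiple of $h^{n/(n-2)}y$.
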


It is well known that this inequality is equivalent to the following one on $\S^n$.
\begin{theorem}\label{thm:sharp-sobolev-sphere}
Let $n\ge 3$ and $2^*:=\frac{2n}{n-2}$. For every $v\in W^{1,2}(\mathbb S^n)$,
\eq{\label{eq:sharp-sobolev-Sn}
\Big(\int_{\mathbb S^n} |v|^{2^*}\,\rd\sigma\Big)^{\frac{n-2}{n}}
\le S_n^{-1}\int_{\mathbb S^n}\Big(|\nabla_{\mathbb S^n} v|^2+\frac{n(n-2)}{4}\,v^2\Big)\,\rd\sigma,
}
where $\rd\sigma$ is the standard volume element on $\mathbb S^n$ and $S_n=\frac{n(n-2)}{4}\,\omega_n^{\frac{2}{n}}$ is sharp.
Moreover, equality holds if and only if $v\equiv 0$ or $v$ is (up to a nonzero multiplicative constant) the pullback of a constant function by a conformal diffeomorphism of $\mathbb S^n$; equivalently,
\[
 v(\xi)=c\,\left(\frac{1-|a|^2}{|\xi-a|^2}\right)^{\frac{n-2}{2}},\qquad \xi\in\mathbb S^n\subset\R^{n+1},
\]
for some $c\in\R\setminus\{0\}$ and some $a\in\R^{n+1}$ with $|a|<1$.
\end{theorem}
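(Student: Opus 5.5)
The plan is to derive Theorem~\ref{thm:sharp-sobolev-sphere} from Theorem~\ref{thm:sharp-sobolev-aubin-talenti} by stereographic projection, using the conformal covariance of the conformal Laplacian $L=-\Delta+\frac{n(n-2)}{4}$ on $\S^n$.

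\textbf{Step 1 (the conformal factor).} Let $\pi:\S^n\setminus\{N\}\to\R^n$ be the stereographic projection from the north pole. Pulling back the round metric gives
\[
(\pi^{-1})^*g_{\S^n}=\Big(\tfrac{2}{1+|x|^2}\Big)^2\rd x^2=U^{\frac{4}{n-2}}\rd x^2,\qquad U(x)\coloneqq\Big(\tfrac{2}{1+|x|^2}\Big)^{\frac{n-2}{2}}.
\]
For $v$ on $\S^n$ we set $u\coloneqq U\,(v\circ\pi^{-1})$ on $\R^n$. Since $\rd\sigma=U^{2^*}\rd x$, we immediately get
\[
\int_{\R^n}|u|^{2^*}\rd x=\int_{\S^n}|v|^{2^*}\rd\sigma .
\]

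\textbf{Step 2 (the energy identity).} The key fact is that the round metric has constant scalar curvature $n(n-1)$, so $U$ solves $-\Delta U=\frac{n(n-2)}{4}U^{\frac{n+2}{n-2}}$. Together with the transformation law of the conformal Laplacian under $g\mapsto U^{4/(n-2)}g$, this gives
\[
\int_{\R^n}|\nabla u|^2\rd x=\int_{\S^n}\Big(|\nabla v|^2+\tfrac{n(n-2)}{4}v^2\Big)\rd\sigma .
\]
I would first verify this for $v\in C^\infty(\S^n)$. Expanding $|\nabla(U w)|^2$ with $w=v\circ\pi^{-1}$ produces the term $\int U w\nabla U\cdot\nabla w$. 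Integrating it by parts turns it into the $-\Delta U$ term, and the equation for $U$ converts that into the zeroth-order term $\frac{n(n-2)}{4}\int v^2\rd\sigma$.

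The boundary terms at infinity vanish because $U=O(|x|^{2-n})$ and $|\nabla U|=O(|x|^{1-n})$, while $w$ is bounded with $|\nabla w|=O(|x|^{-2})$. For general $v\in W^{1,2}(\S^n)$ I would argue by density: the map $v\mapsto u$ is an isometry onto its image in $\dot W^{1,2}(\R^n)$, and the single point $N$ has zero capacity for $n\ge 3$. The inequality \eqref{eq:sharp-sobolev-Sn} with constant $S_n$ then follows from \eqref{eq:sharp-sobolev-Rn}.

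\textbf{Step 3 (sharpness and equality).} Sharpness is immediate by testing $v\equiv1$: the right-hand side is $S_n^{-1}\frac{n(n-2)}{4}\omega_n$, which equals $\omega_n^{\frac{n-2}{n}}$, the left-hand side. Conversely, the map $v\mapsto u$ is a bijection, so $v$ is extremal if and only if $u$ is extremal in Theorem~\ref{thm:sharp-sobolev-aubin-talenti}. It therefore remains to identify which $v$ correspond to the bubbles $c\big(\lambda/(1+\lambda^2|x-x_0|^2)\big)^{\frac{n-2}{2}}$.

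\begin{itemize}
\item The choice $\lambda=1$, $x_0=0$ gives $u=2^{-\frac{n-2}{2}}cU$, i.e.\ $v$ is constant.
\item Dilations and translations of $\R^n$ are conformal maps. Conjugated by $\pi$, they generate all conformal diffeomorphisms of $\S^n$ modulo rotations, and rotations fix constants.
\item A general bubble is obtained from $U$ by a dilation $x\mapsto\lambda x$ and a translation $x\mapsto x-x_0$. Each such action on $u$ corresponds to the weighted pullback $v\mapsto|\det\rd\Phi|^{\frac{n-2}{2n}}\,v\circ\Phi$ on $\S^n$.
\item So extremals are exactly these pullbacks of constants. Writing $\Phi$ as a Möbius map of the ball parametrized by $a\in\R^{n+1}$ with $|a|<1$, whose Jacobian is $\big(\frac{1-|a|^2}{|\xi-a|^2}\big)^{n}$, gives the stated explicit formula.
\end{itemize}

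The parameter count, $n+1$ parameters $(\lambda,x_0)$ against $a$ in the $(n+1)$-ball, is consistent with this identification.

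\textbf{Main obstacle.} I expect the main difficulty to be the rigorous justification of the energy identity for non-smooth $v$, namely the integration by parts near $\pi(N)=\infty$ and the density argument. The cleanest route is a cut-off near $N$ on the sphere side, whose $W^{1,2}$ cost tends to zero for $n\ge3$. A secondary bookkeeping task is matching the bubble parameters $(\lambda,x_0)$ to $a$ in the explicit Möbius formula.
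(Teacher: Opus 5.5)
Your proposal is correct and takes the route the paper intends. The paper gives no separate proof; it only remarks that the spherical inequality is the well-known equivalent of the Aubin--Talenti inequality on $\R^n$. Your stereographic-projection argument spells out that equivalence: the conformal-Laplacian energy identity via $-\Delta U=\frac{n(n-2)}{4}U^{\frac{n+2}{n-2}}$, the density/capacity justification, and the transport of the Euclidean bubbles to weighted pullbacks of constants, with Jacobian $\big(\frac{1-|a|^2}{|\xi-a|^2}\big)^n$.
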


If one is interested in a smaller class of functions, it is natural to hope that we have an ``improved Sobolev inequality'', i.e., an inequality with a better constant. The first such an improved Sobolev inequality related to our work is the following

\begin{theorem}\label{cor:aubin-barycenter}
Let $n\ge 3$. There exists a constant $c>0$ such that for any $u\in W^{1,2}(\S^n)$ with
\eq{\label{eq:barycenter-constraint-sphere}
    \int_{\S^n} x\,|u|^{\frac{2n}{n-2}}\,\rdV = 0 \in \R^{n+1},
}
we have
\eq{\label{eq:aubin-improved-sobolev-barycenter}
    \int_{\S^n}\Big(\abs{\nabla u}^2 + \frac{n(n-2)}{4}u^2\Big)\,\rdV
    \;\geq\; \frac{n(n-2)}{4}\omega_n^{\frac{2}{n}}\norm{u}_{\frac{2n}{n-2}}^2
    + c\Big(\omega_n^{\frac{2}{n}}\norm{u}_{\frac{2n}{n-2}}^2 - \int_{\S^n} u^2\,\rdV\Big).
}
\end{theorem}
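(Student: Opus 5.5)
We prove Theorem~\ref{cor:aubin-barycenter} by contradiction and compactness, exploiting the fact that the barycenter constraint excludes the non-constant Sobolev optimizers.

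\emph{Setup and normalization.} By homogeneity we normalize $\norm{u}_{\frac{2n}{n-2}}=1$ and may replace $u$ by $\abs{u}$, since this leaves all quantities unchanged. Write
\[
D(u)=\int\Big(\abs{\nabla u}^2+\tfrac{n(n-2)}{4}u^2\Big)-\tfrac{n(n-2)}{4}\omega_n^{\frac2n}
\]
for the Sobolev deficit and $E(u)=\omega_n^{\frac2n}-\int u^2\ge 0$, where $E\ge0$ follows from H\"older. Both vanish on constants. Suppose the theorem fails. Then there is a sequence $u_k\ge0$ with $\norm{u_k}_{2^*}=1$, barycenter zero, $E(u_k)>0$ and $D(u_k)/E(u_k)\to0$. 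Since $E\le\omega_n^{2/n}$, we get $D(u_k)\to0$, so $u_k$ is a minimizing sequence for the sharp Sobolev inequality of Theorem~\ref{thm:sharp-sobolev-sphere}. By the concentration-compactness/Lions profile decomposition on $\S^n$ (or by Bianchi--Egnell stability), $u_k$ approaches the optimizer manifold $\mathcal M$ in $W^{1,2}$. More precisely, there are conformal bubbles $v_k=c_k\big(\frac{1-\abs{a_k}^2}{\abs{\xi-a_k}^2}\big)^{\frac{n-2}{2}}$ with $\norm{u_k-v_k}_{W^{1,2}}\to0$.

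\emph{Barycenter pins the bubble.} Next we show that $a_k\to0$. If instead $\abs{a_k}\to1$ along a subsequence, then $\abs{u_k}^{2^*}$ concentrates at a point $p\in\S^n$, and so $\int x\abs{u_k}^{2^*}\to p\ne0$, contradicting the constraint. Hence $a_k$ stays in a compact subset of the ball. Passing to a limit, the barycenter of the limiting bubble is zero. The barycenter of the bubble with parameter $a$ is a nonzero multiple of $a$ (by symmetry, with positive coefficient), so the limit is $a=0$, i.e.\ a constant. Thus $u_k\to c:=\omega_n^{-\frac{n-2}{2n}}$ strongly in $W^{1,2}$.

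\emph{Local analysis near the constant.} This is the main step. Write $u_k=c(1+\epsilon_k w_k)$ with $\epsilon_k\to0$ and $\norm{w_k}_{W^{1,2}}=1$. Expand to second order, using $\norm{u_k}_{2^*}=1$ to remove the first-order terms. This gives
\[
D\approx c^2\epsilon^2\Big[\int\abs{\nabla w}^2-n\Big(\int w^2-\tfrac1{\omega_n}\big(\textstyle\int w\big)^2\Big)\Big]
\]
and
\[
E\approx c^2\epsilon^2\,\tfrac{4}{n-2}\Big(\int w^2-\tfrac1{\omega_n}\big(\textstyle\int w\big)^2\Big),
\]
up to the exact constants, which I will verify. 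Hence $D/E$ is asymptotically a multiple of $\big(\int\abs{\nabla w}^2-n\int \tilde w^2\big)/\int\tilde w^2$, where $\tilde w$ is $w$ minus its mean.

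The barycenter condition linearizes to $\int x_i w=o(1)$, so $\tilde w$ is asymptotically orthogonal to the first spherical harmonics. Its spectral content therefore lies in eigenvalues $\ge 2(n+1)$, which gives a ratio $\ge n+2$. This should yield a positive lower bound on $D/E$, contradicting $D/E\to0$. The delicate points are twofold. First, the $o(1)$ error in the linearized barycenter condition must be controlled relative to $\epsilon^2$. Second, the higher-order Taylor remainder of $\abs{1+\epsilon w}^{2^*}$ must be handled, since $w$ is only in $W^{1,2}$ and is not bounded. For the second point I would use the standard Bianchi--Egnell pointwise inequality for $\abs{1+t}^{p}$ with $p=2^*$. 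I would also treat separately the degenerate case where the mean part dominates: then $\int\tilde w^2$ is small relative to $\int w^2$, and I would check that $E$ is genuinely quadratic in $\tilde w$. This is the main obstacle I anticipate.
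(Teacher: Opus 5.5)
Your argument is correct. The paper does not prove this theorem; it quotes it with references to Aubin, Chang--Yang, Li and Dolbeault--Esteban--Loss. What it proves itself in Part~2, and only for $n\ge5$, is the quantitative strengthening $\mathfrak a_n>c_n$, which contains the statement.

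That proof is variational rather than soft. Assuming $\mathfrak a_n\le c_n$, it first produces an extremal by subcritical approximation. Atoms are excluded using the moment bound $\lambda<\frac{3}{10}$ of Lemma~\ref{lem2.1} together with the Kazdan--Warner bound on the multiplier $\mu$. It then contradicts minimality by a second variation along the conformal direction $\<\nabla u,\nabla\ell\>-\frac{n-2}{2}\ell u+q\ell u$. This yields an explicit constant, which the Dirac application needs, but it depends on $n\ge5$; the explicit bound is even false for $n=3$.

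Your route is contradiction plus compactness: Bianchi--Egnell or Lions, then the barycenter forcing the bubble parameter to $a=0$, then a second-order expansion at the constant. It gives only some $c>0$, but it works uniformly for all $n\ge3$.

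Your local step is the same computation as the paper's Lemma~\ref{lem:constant} with $p_j$ replaced by $2^*$. The two constraints make the zeroth and first spherical-harmonic components of $h=u-c$ of size $O(\norm{h}_{W^{1,2}}^2)$. So the ``mean-dominated'' case you anticipate cannot occur. The spectral gap $2(n+1)$ then gives the threshold $\frac{n-2}{4}(n+2)=\frac{n+2}{2^*-2}$ that appears in that lemma.

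To avoid worrying about error terms in the denominator $\int\tilde w^2$, do not compare leading terms of the quotient. Instead prove $D\ge\delta\norm{h}_{W^{1,2}}^2-o(\norm{h}_{W^{1,2}}^2)$ and $E\le C\norm{h}_{W^{1,2}}^2$ separately. Together with $D(u_k)<E(u_k)/k$ and $h_k\neq0$, this closes the contradiction.
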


This result is usually attributed to Aubin \cite{Aubin79}.  A detailed  proof can be found in \cite{ChangYang91}, Li \cite{LiYanyan96} and Dolbeault--Esteban--Loss \cite{DEL17}. A similar result on the fourth order Paneitz operator was proved by Djadli--Malchiodi--Ould Ahmedou \cite{Malchiodi02}.

Since the work of Aubin, there is a lot of work on improved Sobolev inequalities. Here we merely mention the result of Hang--Wang \cite{HangWang22} on improved Sobolev inequality under higher order moment constraints.

In this part, we are interested in improved Sobolev inequalities in  form \eqref{eq:aubin-improved-sobolev-barycenter}. Specifically, we are interested in the question of what is the best value of $\mathfrak{a}_n$. Using a sequence of even functions with two blow-ups, it is not difficult to show that 
\eq{\label{conj0}
    \mathfrak{a}_n \leq \frac{n(n-2)}{4}(2^{\frac{2}{n}}-1) \eqcolon c_n^*.
}
We conjecture that \eq{\label{conj} \mathfrak{a}_n =c^*_n.}
In fact, Dolbeault conjectured  in his survey \cite[equation (4.3)]{Dolbeault21}  that \eqref{conj} is true in the class of even functions $u$, i.e., $u(-x)=u(x)$, $\forall $ $x\in \S^n$.

Our Theorem~\ref{thm_a_n0} gives a universal  lower bound of $\mathfrak{a}_n$ for all $n\geq5$. For convenience, let us state it again.

\begin{theorem}\label{partII_main_thm} We have
\eq{
\mathfrak{a}_n > \frac{n(n-2)}{4(n^2-3n+1)}=:c_n.
}
    \end{theorem}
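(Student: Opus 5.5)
We argue by contradiction. Suppose $\mathfrak{a}_n\le c_n$ for some $n\ge5$, and take a minimizing sequence $u_k\in W^{1,2}(\S^n)$, which we may assume nonnegative. We normalize $\norm{u_k}_{\frac{2n}{n-2}}=1$. The sequence satisfies the barycenter constraint and
\eq{
    D(u_k)\coloneqq\omega_n^{\frac2n}-\int u_k^2>0 ,
}
and along it the Rayleigh quotient $Q(u_k)$ tends to a limit $\le c_n$. Clearing the denominator, $Q(u)\le c$ is equivalent to
\eq{
    \int\abs{\nabla u}^2+\Big(\frac{n(n-2)}{4}+c\Big)\int u^2\le\Big(\frac{n(n-2)}{4}+c\Big)\omega_n^{\frac2n}\norm{u}_{\frac{2n}{n-2}}^2 .
}
This is a critical-exponent Sobolev-type inequality with a mass $\lambda=\frac{n(n-2)}4+c$ slightly larger than the conformal one. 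Note that $c_n=\frac{n(n-2)}{4(n^2-3n+1)}<\frac14$, which is much smaller than $c_n^*\approx\frac{(n-2)\log 2}{2}$, so we have a lot of room. The plan is to show that every possible limiting behaviour of $(u_k)$ forces the quotient strictly above $c_n$.

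\textbf{Step 1 (profile decomposition).} Apply Struwe's global compactness on $\S^n$ to $u_k$:
\eq{
    u_k=u_\infty+\sum_{i=1}^m B_{k}^{(i)}+o(1)\quad\text{in } W^{1,2},
}
where the $B^{(i)}_k$ are concentrating bubbles. Three regimes occur.

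(a) $D(u_k)\to0$ with $u_k\to$ const. This is the near-constant regime. Here we linearize: write $u=\bar u(1+\varepsilon v)$. The barycenter constraint removes the first-order component of $v$ along the degree-one harmonics $x_j$. A second-order expansion then gives
\eq{
    Q\approx\frac{n-2}{4}\cdot\frac{\int\abs{\nabla v}^2-n\int v^2}{\int v^2}\ge\frac{n-2}{4}\big(\lambda_2-n\big)=\frac{(n-2)(n+2)}4 ,
}
using $\lambda_2=2(n+1)$. The degree-one components enter only at higher order, and we must check that the barycenter condition controls them quadratically. This is the standard Aubin computation.

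(b) Pure concentration. The barycenter condition rules out a single bubble carrying all the $L^{2^*}$ mass. Hence either $m\ge2$, or $m=1$ with $u_\infty\ne0$. In both cases the energies split. Using the Sobolev inequality for each piece and the strict concavity of $t\mapsto t^{\frac{n-2}{n}}$ on mass fractions, the quotient in the limit is bounded below by $\frac{n(n-2)}4(2^{\frac2n}-1)=c_n^*$ for the symmetric split, and by something at least as large for unbalanced splits. Since $c_n^*>c_n$, this regime is excluded.

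(c) Strong convergence to a nonconstant $u_\infty$ with $D(u_\infty)>0$. Then $u_\infty$ is a minimizer.

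\textbf{Step 2 (the compact case).} This is the main obstacle. A minimizer satisfies the Euler--Lagrange equation
\eq{
    -\Delta u+\Big(\frac{n(n-2)}4+a\Big)u=\Big(\frac{n(n-2)}4+a\Big)\omega_n^{\frac2n}u^{\frac{n+2}{n-2}}+\mu\cdot x\,u^{\frac{n+2}{n-2}},\qquad a=\mathfrak a_n\le c_n,
}
with a Lagrange multiplier $\mu\in\R^{n+1}$ coming from the barycenter constraint. First we try to show $\mu=0$. To do this we test the equation against the conformal gradient fields $\nabla x_j$, via a Kazdan--Warner/Pohozaev identity. The constant-mass parts then cancel up to a term proportional to $a\int x_j u\,\partial u$. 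This should leave a positive-definite quadratic form in $\mu$, as long as $a$ is small.

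With $\mu=0$, we have a positive solution of $-\Delta u+\lambda u=\lambda' u^{2^*-1}$ with $\lambda-\frac{n(n-2)}4=a<\frac14$. We then apply the Bidaut-V\'eron--V\'eron/Obata integral identity, which tests the equation against the traceless part of $\nabla^2 u^{-\gamma}$ for a suitable power $\gamma$. This identity yields rigidity, namely $u\equiv$ const, whenever the mass excess $a$ is below an explicit threshold. That threshold is of order $\frac{n}{n-1}$ times a dimensional factor, and we expect it to exceed $c_n$ for $n\ge5$.

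A constant $u$ contradicts $D(u)>0$. Hence regime (c) is impossible, and $\mathfrak{a}_n>c_n$ follows.

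\textbf{Expected difficulty and fallback.} The delicate points are two: checking that the rigidity threshold really exceeds $c_n$, and handling the multiplier $\mu$. If the Kazdan--Warner step does not give $\mu=0$ cleanly, we would instead exploit the barycenter constraint in the Obata identity directly. Concretely, we would add the $\mu$-term to the computation and absorb it using $\int x\,u^{2^*}=0$.
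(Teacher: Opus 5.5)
Your argument breaks down in Step~2, which is where the contradiction has to come from. Testing the Euler--Lagrange equation against $\langle\nabla u,\nabla x_j\rangle$ and $x_ju$ does not give a quadratic form in $\mu$. It gives the inhomogeneous linear identity of Lemma~\ref{lem7.2},
\[
\mathfrak{a}_n\fint x\,u^2=-\frac{n-2}{2n}(\mathrm{id}-M)\mu,\qquad M=\fint x\otimes x\,u^{\frac{2n}{n-2}} .
\]
Since $\mathrm{id}-M$ is invertible and $\mathfrak{a}_n>0$, you get $\mu=0$ exactly when $\fint x\,u^2=0$. The constraint $\fint x\,u^{\frac{2n}{n-2}}=0$ does not force that, and the paper uses this identity only to \emph{bound} $\abs{\mu}$ (Lemma~\ref{lem7.4}).

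Even granting $\mu=0$, the rigidity step fails. The Bidaut-V\'eron--V\'eron/Obata rigidity for $-\Delta u+\kappa u=c\,u^{q}$ on the round $\S^n$ needs $\kappa\le\frac{n}{q-1}$. For $q=\frac{n+2}{n-2}$ this is $\kappa\le\frac{n(n-2)}{4}$, i.e.\ mass excess $a\le0$. At the critical exponent there is no slack: at $a=0$ the whole conformal orbit of the constants solves the equation, so the integral identity degenerates exactly there and says nothing for $a>0$. There is no positive threshold to compare with $c_n$. A rigidity result of the kind you hope for would, together with (a) and (b), nearly settle the conjecture $\mathfrak{a}_n=c_n^*$ that the paper leaves open. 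Your fallback does not supply the missing idea.

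Step~1(b) also has a gap. Struwe's global compactness applies to Palais--Smale sequences of a fixed functional, not to arbitrary minimizing sequences of a constrained quotient. What you have for free is Lions' concentration--compactness. Applying Sobolev to each piece then only gives
$\frac{n(n-2)}{4}\big(\sum_{i\ge0}t_i^{\frac{n-2}{n}}-1\big)/\big(1-\fint u_\infty^2\big)$, where $t_0$ is the mass of $u_\infty$ and $t_i$ are the atom masses.

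If $u_\infty\equiv0$, the barycenter forces $t_i\le\frac12$ and you do get $c_n^*$. But concavity makes the balanced split the \emph{largest} value of $\sum t_i^{\frac{n-2}{n}}-1$, not the smallest. Take $u_\infty\neq0$ of mass $1-t$ and one atom of mass $t$. Unless $\fint u_\infty^2$ is close to $1$, the bound tends to $0$ as $t\to0$, so small atoms are not excluded.

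The paper avoids both problems:
\begin{enumerate}
\item Under the hypothesis $\mathfrak{a}_n\le c_n$, it shows the top eigenvalue of $M$ is $<\frac3{10}$, by applying Sobolev to $u\sqrt{(1\pm\ell)/2}$.
\item It gets a minimizer as a limit of subcritical minimizers. Atoms are ruled out because the Euler--Lagrange equation plus the bound on $\abs{\mu_j}$ force each atom to have mass $>\frac3{10}$, while the moment matrix caps it at $\frac3{10}$.
\item It obtains the contradiction from the second variation, not from rigidity. The test function is the conformal one, $v=\langle\nabla u,\nabla\ell\rangle-\frac{n-2}{2}\ell u+q\ell u$ with $\ell=e\cdot x$ and $e$ the top eigenvector of $M$, and it shows $Q(v,v)<0$.
\end{enumerate}
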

    As showed in Part 1, the lower estimate is enough to our aim. It is certainly not optimal.  Since $c_3>c_3^*$, Theorem \ref{thm_a_n0} is not true for $n=3$.

For comparison, in dimension $2$ the critical Sobolev embedding is replaced by the Moser--Trudinger inequality, and several improved versions under the barycenter-type constraints have been studied since Chang--Yang \cite{ChangYang91}, which ususally are called Chang-Yang conjecture. The original Chang-Yang conjecture was proved finally by Gui-Moradifam in \cite{Gui_Moradim}. 
Very recently the higher dimensional generalization of the Chang--Yang conjecture was resolved in  \cite{GLWY26}. For related development and history we refer to \cite{Gui_Moradim} and \cite{GLWY26}
and the references therein.

\section{Proof of Theorem~\ref{thm_a_n0}}\label{sec8}

In this section, we prove Theorem~\ref{partII_main_thm}, 
under the assumption of the existence of minimizers of the functional corresponding to \eqref{eq:aubin-improved-sobolev-barycenter} for a given $\mathfrak{a}_n$,   if $\mathfrak{a}_n\le c_n$.
The existence of a minimizer will be proved in Section~\ref{sec9} below.

Let $u$ be a minimizer. 
It satisfies the  Euler--Lagrange equation 
\eq{
    -\Delta u + \Big(\frac{n(n-2)}{4}+\mathfrak{a}_n\Big)u - \Big(\frac{n(n-2)}{4}+\mathfrak{a}_n\Big)u^{\frac{n+2}{n-2}} = (\mu\cdot x)u^{\frac{n+2}{n-2}},
}
for some constant vector $\mu\in\R^{n+1}$, or equivalently
\eq{\label{eq:important}
    -\Delta u + \Big(\frac{n(n-2)}{4}+\mathfrak{a}_n\Big)u = \Big(\frac{n(n-2)}{4}+\mathfrak{a}_n + \mu\cdot x\Big)u^{\frac{n+2}{n-2}},
}
with constraints
\eq{\label{eq0}
    \int u^{\frac{2n}{n-2}} = \omega_n, \qquad \int xu^{\frac{2n}{n-2}}=0.
}
As a minimizer, we may assume $u\ge 0$ and hence $u>0$ by the maximum principle.

The second variation is
\eq{
    Q(v,v) = \int \Big\{ \abs{\nabla v}^2 + \Big(\frac{n(n-2)}{4}+\mathfrak{a}_n\Big)v^2 \Big\} - \frac{n+2}{n-2}\int \Big(\frac{n(n-2)}{4}+\mathfrak{a}_n + \mu\cdot x\Big)u^{\frac{4}{n-2}}v^2
}
for  any variational vector $v$, which satisfies 
\eq{\label{eq:constraints_v}
    \int u^{\frac{n+2}{n-2}}v=0, \qquad \int xu^{\frac{n+2}{n-2}}v=0.
}
Denote by $\mathcal{C}$ the space of admissible variations $v$ such that \eqref{eq:constraints_v} holds.
Then it suffices to show that
\eq{
    Q \geq 0 \ \hbox{ on }\ \mathcal{C} \implies \mathfrak{a}_n > \frac{n(n-2)}{4(n^2-3n+1)},
}
since $u$ is a minimizer.
From now on we assume by contradiction that 
\eq{\label{eq-1}
    \mathfrak{a}_n \leq \frac{n(n-2)}{4(n^2-3n+1)}.
}
We first construct an admissible test function $v\in\mathcal{C}$ and then show that $Q(v,v)<0$, and hence get a contradiction to \eqref{eq-1}.

\subsection{The test function}

In this subsection, we construct an admissible test function in the spirit of our previous work \cite{WZ25} and \cite{WaZh26}. The idea is to utilize the conformal Killing directions. 

Let $\ell$ be a linear function on $\S^n$, i.e. a first-order spherical harmonic,
\eq{
    \ell = e\cdot x
}
for some unit vector $e\in\R^{n+1}$. Then
\eq{
    \nabla \ell = \nabla(e\cdot x) = e^\top = e - \ell x
}
is a conformal Killing field. Let $\phi_t$ be the corresponding one-parameter family of conformal diffeomorphisms, characterized by
$\frac{\rd}{\rd t}\phi_t = \nabla \ell$.

In view of the normalization
\eq{
    \int_{\S^n} u^{\frac{2n}{n-2}} = \omega_n, \qquad\hbox{or equivalently}\quad \fint_{\S^n} u^{\frac{2n}{n-2}} = 1,
}
we consider the conformal orbit
\eq{
     u_t \coloneqq J_{\phi_t}^{\frac{n-2}{2n}} \cdot u\circ\phi_t,
}
where $J_{\phi_t}$ denotes the Jacobian of $\phi_t$. This choice preserves the normalization:
\eq{
     \fint u_t^{\frac{2n}{n-2}} = \fint J_{\phi_t}\cdot( u\circ\phi_t)^{\frac{2n}{n-2}} = \fint u^{\frac{2n}{n-2}} = 1.
}
By definition of $\ell$, we have
\eq{
    -\Delta \ell = n\ell, \qquad \nabla^2\ell = -\ell g_{\S^n}.
}
It follows
\eq{
    \frac{\rd}{\rd t}\Big|_{t=0} u_t &= \frac{\rd}{\rd t}\Big|_{t=0} J_{\phi_t}^{\frac{n-2}{2n}} \cdot u\circ\phi_t \\
    &= \frac{n-2}{2n}u\cdot\frac{\rd}{\rd t}\Big|_{t=0} J_{\phi_t} + \frac{\rd}{\rd t}\Big|_{t=0} u\circ\phi_t \\
    &= \frac{n-2}{2n} u {\rm div}(\nabla \ell) + \<\nabla u,\nabla \ell\> \\
    &= \<\nabla u,\nabla \ell\> - \frac{n-2}{2}\ell u \eqcolon w.
}

Recall that $u$ be a minimizer of \eqref{eq:a_n0}. The variation $w=\frac{\rd}{\rd t}|_{t=0}u_t$ is the natural infinitesimal deformation along the conformal orbit, and we would like to use it as a test function.
However, to enforce the second admissibility constraint in  \eqref{eq:constraints_v}, we need to modify $w$ by adding an appropriate multiple of $\ell u$, where $\ell$ is the first-order harmonic:
\eq{\label{eq:v}
    v = w + q\ell u,
}
where $q\in\R$ will be chosen below.
We now use the two constraints
\eq{
    \fint u^{\frac{n+2}{n-2}}v = 0, \qquad \fint xu^{\frac{n+2}{n-2}}v = 0
}
to determine suitable $e$ and $q$.

Note that
\eq{
    \fint u^{\frac{n+2}{n-2}}\ell u = \fint \ell u^{\frac{2n}{n-2}} = 0,
}
and integrating by parts gives
\eq{
    \fint u^{\frac{n+2}{n-2}}\<\nabla u,\nabla\ell\> = \frac{n-2}{2n}\fint (-\Delta\ell)u^{\frac{2n}{n-2}} = \frac{n-2}{2} \fint \ell u^{\frac{2n}{n-2}} = 0.
}
It follows
\eq{\label{eq:v1}
    \fint u^{\frac{n+2}{n-2}}w = 0, \qquad \fint u^{\frac{n+2}{n-2}}v = 0.
}
Thus the first constraint holds for all choices of $e$ and $q$. For the second constraint, using
\eq{
    {\rm div}(x\nabla\ell) = e - (n+1)\ell x,
}
we see that
\eq{
    \fint xu^{\frac{n+2}{n-2}}\<\nabla u,\nabla\ell\> &= -\frac{n-2}{2n}\fint (e - (n+1)\ell x)\, u^{\frac{2n}{n-2}},
}
and
\eq{
    \fint xu^{\frac{n+2}{n-2}}\ell u = \fint \ell xu^{\frac{2n}{n-2}}.
}
Therefore
\eq{\label{eq1}
    \fint xu^{\frac{n+2}{n-2}}v = \fint \Big(-\frac{n-2}{2n}e+\big(\frac{n-2}{2n}+q\big)\ell x\Big)u^{\frac{2n}{n-2}}.
}
Set an $(n+1)\times(n+1)$ matrix
\eq{
    M \coloneqq \fint x\otimes x \, u^{\frac{2n}{n-2}},
}
then
\eq{
    Me = \fint (e\cdot x) x u^{\frac{2n}{n-2}} = \fint \ell x u^{\frac{2n}{n-2}}.
}
Inserting above equations into \eqref{eq1} gives
\eq{
    \fint xu^{\frac{n+2}{n-2}}v = -\frac{n-2}{2n}e  + \big(\frac{n-2}{2n}+q\big)Me,
}
where we used \eqref{eq0}. Hence
\eq{
    \fint xu^{\frac{n+2}{n-2}}v = 0 \quad\iff\quad Me = \frac{\frac{n-2}{2n}}{\frac{n-2}{2n}+q}e \eqcolon \lambda e.
}
Namely, $v$ is admissible if and only if $e$ is a $\lambda$-eigenvector of $M$, and
\eq{\label{eq_q}
    q = \frac{n-2}{2n}\frac{1-\lambda}{\lambda}.
}
For the later estimates, we choose $\lambda$ to be the largest eigenvalue of $M$. Since ${\rm tr}(M) = 1$, we have
\eq{
    \lambda \geq \frac{1}{n+1},
}
and
\eq{\label{eq-3}
    \fint \ell x\,u^{\frac{2n}{n-2}} = \lambda e, \qquad \fint \ell^2 u^{\frac{2n}{n-2}} = \lambda.
}

It remains to show that for $v$ defined by \eqref{eq:v} one has
\eq{
    Q(v,v)<0.
}

\subsection{Key estimates}

We begin with a key quantitative bound on the largest eigenvalue $\lambda$ of the moment matrix $M$ defined above. We remark that a similar result was recently proved by Gui--Li--Wei--Ye \cite{GLWY26} in their resolution of the higher dimensional Chang-Yang conjecture.

\begin{lemma}\label{lem2.1}
Let $n\geq5$. Then
\eq{
    \frac{1}{n+1}\leq \lambda < \frac{3}{10}.
}
As a consequence,
\eq{\label{eq-2.5}
    q = \frac{n-2}{2n}\frac{1-\lambda}{\lambda} > \frac{7}{10}.
}
\end{lemma}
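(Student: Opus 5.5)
The lower bound is immediate: $M$ is positive semidefinite with ${\rm tr}(M)=\fint |x|^2u^{\frac{2n}{n-2}}=1$, so its largest eigenvalue satisfies $\lambda\ge \frac1{n+1}$. The bound \eqref{eq-2.5} then follows by inserting $\lambda<\frac3{10}$ into \eqref{eq_q}: the map $\lambda\mapsto \frac{1-\lambda}{\lambda}$ is decreasing, and $\frac{n-2}{2n}\cdot\frac{7}{3}\ge \frac{3}{10}\cdot\frac73=\frac{7}{10}$ for $n\ge5$. So the real content is the upper bound $\lambda<\frac3{10}$. I would derive it from the minimality of $u$ together with the standing assumption \eqref{eq-1}, i.e.\ $\mathfrak a_n\le c_n$, which says that $u$ almost saturates the sharp Sobolev inequality on $\S^n$.

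\textbf{Step 1 (upper bound from minimality).} Since $u$ attains $\mathfrak a_n\le c_n$ and $\fint u^{\frac{2n}{n-2}}=1$, we have
\begin{equation}
\int|\nabla u|^2+\frac{n(n-2)}4\int u^2\le \frac{n(n-2)}4\omega_n+c_n\Big(\omega_n-\int u^2\Big).
\end{equation}

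\textbf{Step 2 (lower bound through the direction $e$).} I would apply the sharp Sobolev inequality, Theorem~\ref{thm:sharp-sobolev-sphere}, to the test function $\ell u$. For the right-hand side I would use Jensen's inequality with respect to the probability measure $u^{\frac{2n}{n-2}}\,\rdV/\omega_n$ and the convex function $s\mapsto s^{\frac{n}{n-2}}$ applied to $s=\ell^2$. Together with \eqref{eq-3} this gives
\begin{equation}
\norm{\ell u}_{\frac{2n}{n-2}}^2\ge \omega_n^{\frac{n-2}{n}}\lambda .
\end{equation}
For the left-hand side, integration by parts and $-\Delta\ell=n\ell$ give
\begin{equation}
\int|\nabla(\ell u)|^2=\int\ell^2|\nabla u|^2+n\int\ell^2u^2 .
\end{equation}
To control the orthogonal directions I would also apply Sobolev to $\sqrt{1-\ell^2}\,u$, or equivalently to $x_i u$ along the remaining eigenvectors of $M$, and sum. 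The aim is to make the deficit between these summed Sobolev bounds and Step~1 proportional to $\lambda-\frac1{n+1}$ up to a controllable error. Comparing with Step~1 then gives an inequality of the form $\lambda\le F_n(c_n)$.

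\textbf{Step 3 (weighted gradient term, the main obstacle).} The term $\int\ell^2|\nabla u|^2$ is not controlled by Step~1 alone. I would handle it with the Euler--Lagrange equation \eqref{eq:important}, tested against $\ell^2u$. This expresses $\int\ell^2|\nabla u|^2$ through $\int\ell^2u^2$, $\int u^2\Delta(\ell^2)$ (explicit, since $\Delta \ell^2=2-2(n+1)\ell^2$), $\lambda$, and the Lagrange-multiplier term $\int(\mu\cdot x)\ell^2u^{\frac{2n}{n-2}}$. To bound $\mu$ I would test \eqref{eq:important} against the conformal variation $w$ of the previous subsection, i.e.\ use a Kazdan--Warner type identity. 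Because $\mathfrak a_n$ is small, this should show that $\mu$ is of size $O(\mathfrak a_n)$ times explicit moments of $u$.

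Once these estimates are inserted, the resulting inequality is an explicit rational function of $n$ and $\lambda$. The final task is to verify numerically that it forces $\lambda<\frac3{10}$ for every $n\ge5$. I expect the argument to be tightest at $n=5$, where $c_5=\frac{15}{44}$ is comparatively large; that case may need the sharpest form of the Jensen step.
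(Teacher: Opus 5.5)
\textbf{There is a genuine gap in the upper bound.} Your lower bound $\lambda\ge\frac1{n+1}$ and the deduction $q>\frac7{10}$ from $\lambda<\frac3{10}$ are fine. The problem is $\lambda<\frac3{10}$ itself: Steps 1--2 cannot yield it. Every inequality there is also satisfied by a highly concentrated standard bubble centred at $e$. The bubble has zero Sobolev deficit, so Step 1 holds, and the Sobolev/Jensen bounds of Step 2 hold for every function. Yet its moment matrix is close to $e\otimes e$, so $\lambda$ is as close to $1$ as you like. Only the barycenter condition $\fint xu^{\frac{2n}{n-2}}=0$ excludes this, and Step 2 never uses it; it uses only ${\rm tr}\,M=1$.

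Concretely, the Jensen bounds $\norm{\ell u}^2_{\frac{2n}{n-2}}\ge\omega_n^{\frac{n-2}{n}}\lambda$ and $\norm{\sqrt{1-\ell^2}\,u}^2_{\frac{2n}{n-2}}\ge\omega_n^{\frac{n-2}{n}}(1-\lambda)$ add up to exactly $\omega_n^{\frac{n-2}{n}}$, so $\lambda$ cancels. The surplus depends only on how much $\ell^2$ fluctuates under $u^{\frac{2n}{n-2}}\rdV$. It is small, for instance, when the mass sits near both poles $\pm e$, which is precisely the configuration with $\lambda\approx1$ that must be excluded. The $x_iu$ variant is not equivalent: since $\sum_i\abs{\nabla x_i}^2=n$, it costs $n\int u^2$, which is first order relative to $\frac{n(n-2)}4$ and swamps any gain.

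Step 3 does not repair this. Combining Sobolev for $\ell u$ with the Euler--Lagrange equation tested against $\ell^2u$ gives only
\[
\fint(1-\ell^2)u^2+\mathfrak a_n\Big(\lambda-\fint\ell^2u^2\Big)+\fint(\mu\cdot x)\ell^2u^{\frac{2n}{n-2}}\ge0,
\]
where $\lambda$ enters with a positive coefficient, so there is no upper bound on $\lambda$. Moreover, getting $\mu=O(\mathfrak a_n)$ from the Kazdan--Warner identity $\mathfrak a_n\fint xu^2=-\frac{n-2}{2n}(\rid-M)\mu$ requires inverting $\rid-M$. Its smallest eigenvalue is $1-\lambda$, so you would already need $1-\lambda$ bounded below. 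The paper only bounds $\mu$ after $\lambda<\frac3{10}$ is known.

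\textbf{The missing idea} is a sign-sensitive splitting that uses $\fint\ell u^{\frac{2n}{n-2}}=0$ at first order. Put $f_\pm=u\sqrt{(1\pm\ell)/2}$. Then $f_+^2+f_-^2=u^2$, and since $\abs{\nabla\ell}^2=1-\ell^2$, also $\abs{\nabla f_+}^2+\abs{\nabla f_-}^2=\abs{\nabla u}^2+\frac14u^2$. The gradient cost is therefore only of relative size $\frac1{n(n-2)}$. Apply sharp Sobolev to $f_\pm$, sum, and use Step 1 with $\mathfrak a_n\le c_n$. With $r=\frac n{n-2}$ this gives
\[
\sum_\pm\Big(\fint\Big(\frac{1\pm\ell}2\Big)^{r}u^{\frac{2n}{n-2}}\Big)^{\frac1r}\le1+\frac1{n^2-3n+1}.
\]
By the barycenter condition each half carries mass $\frac12$, so the linear term in the third-order Taylor lower bound for $(1\pm\ell)^r$ vanishes. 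Use $\abs{\fint\ell^3u^{\frac{2n}{n-2}}}\le\lambda$ and the monotonicity of $x\mapsto(C-x)^{1/r}+(C+x)^{1/r}$ on $[0,C]$. If $\lambda\ge\frac3{10}$, the left side is then at least $1+\frac{3}{10(n-2)}-\frac1{10(n-2)^3}>1+\frac1{n^2-3n+1}$, a contradiction. The gain here is linear in $\lambda$, about $\frac{\lambda}{n-2}$, which your even-in-$\ell$ decomposition cannot produce. No Euler--Lagrange or Kazdan--Warner input is needed.
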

\begin{proof}
Note that $\abs{\ell}\leq1$. Define
\eq{
    f_\pm \coloneqq u\sqrt{\frac{1\pm\ell}{2}}.
}
Then
\eq{
    \abs{f_+}^2 + \abs{f_-}^2 = u^2,
}
and
\eq{
    \nabla f_\pm = \sqrt{\frac{1\pm\ell}{2}}\nabla u \pm \frac{u\nabla\ell}{2\sqrt{2}\sqrt{1\pm\ell}}. 
}
Since
\eq{
    \abs{\nabla\ell}^2 = \abs{\nabla(e\cdot x)}^2 = \abs{e - \ell x}^2 = 1-\ell^2,
}
we have
\eq{
    \abs{\nabla f_+}^2 + \abs{\nabla f_-}^2 = \abs{\nabla u}^2 + \frac{1}{4}u^2.
}
We now apply the critical Sobolev inequality
\eq{
    \int \abs{\nabla f}^2 + \frac{n(n-2)}{4}\int f^2 \geq \frac{n(n-2)}{4}\omega_n^{\frac{2}{n}}\Big(\int f^{\frac{2n}{n-2}}\Big)^{\frac{n-2}{n}}
}
to $f=f_\pm$ and sum the resulting inequalities. This yields
\eq{\label{eq2}
    \int\abs{\nabla u}^2 + \frac{1}{4}\int u^2 + \frac{n(n-2)}{4}\int u^2 \geq \frac{n(n-2)}{4}\omega_n^{\frac{2}{n}}\Big\{\Big(\int f_+^{\frac{2n}{n-2}}\Big)^{\frac{n-2}{n}} + \Big(\int f_-^{\frac{2n}{n-2}}\Big)^{\frac{n-2}{n}}\Big\}.
}
Set
\eq{\label{eq:k}
    \kappa \coloneqq \frac{n(n-2)}{4}+\mathfrak{a}_n.
}
Recall from \eqref{eq:a_n0} that
\eq{\label{eq3}
    \int\abs{\nabla u}^2 = \kappa\Big\{ \omega_n^{\frac{2}{n}}\Big(\int u^{\frac{2n}{n-2}}\Big)^{\frac{n-2}{n}} - \int u^2 \Big\} = \kappa\Big(\omega_n - \int u^2\Big).
}
Moreover, our assumption \eqref{eq-1} implies
\eq{\label{eq4}
    \frac{4\mathfrak{a}_n}{n(n-2)} \leq \frac{1}{n^2-3n+1}.
}
Combining \eqref{eq2}, \eqref{eq3}, and \eqref{eq4} gives
\eq{\label{eq5}
    &\Bigg( \fint \Big(\frac{1+\ell}{2}\Big)^{\frac{n}{n-2}}u^{\frac{2n}{n-2}} \Bigg)^{\frac{n-2}{n}} + \Bigg( \fint \Big(\frac{1-\ell}{2}\Big)^{\frac{n}{n-2}}u^{\frac{2n}{n-2}} \Bigg)^{\frac{n-2}{n}} \\
    &\leq \frac{4}{n(n-2)}\fint\abs{\nabla u}^2 + \frac{1}{n(n-2)}\fint u^2 + \fint u^2 \\
    &= \frac{4}{n(n-2)}\kappa\Big(1-\fint u^2\Big) + \frac{1}{n(n-2)}\fint u^2 + \fint u^2 \\
    &= 1 + \frac{4\mathfrak{a}_n}{n(n-2)}\Big(1-\fint u^2\Big) + \frac{1}{n(n-2)}\fint u^2 \\
    &\leq 1 + \frac{1}{n^2-3n+1}\Big(1-\fint u^2\Big) + \frac{1}{n^2-3n+1}\fint u^2 \\
    &= 1 + \frac{1}{n^2-3n+1}.
}
We then estimate the left-hand side of \eqref{eq5} from below. Set $r\coloneqq\frac{n}{n-2}\in(1,\frac{5}{3}]$. Then
\eq{\label{eq6}
    (1+x)^r \geq 1 + rx + \frac{r(r-1)}{2}x^2 + \frac{r(r-1)(r-2)}{6}x^3, \qquad \abs{x}\leq1.
}
In order to apply \eqref{eq6} to $x=\pm\ell$, we note that
\eq{\label{eq7}
    \fint \ell\,u^{\frac{2n}{n-2}}=0, \qquad \fint \ell^2\,u^{\frac{2n}{n-2}} = \lambda, \qquad\hbox{and set}\quad \tau\coloneqq \fint \ell^3\,u^{\frac{2n}{n-2}},
}
where we used $Me=\lambda e$ in the second identity. Combining \eqref{eq6} and \eqref{eq7} gives
\eq{\label{eq8}
    &\Bigg( \fint \Big(\frac{1+\ell}{2}\Big)^{\frac{n}{n-2}}u^{\frac{2n}{n-2}} \Bigg)^{\frac{n-2}{n}} + \Bigg( \fint \Big(\frac{1-\ell}{2}\Big)^{\frac{n}{n-2}}u^{\frac{2n}{n-2}} \Bigg)^{\frac{n-2}{n}} \\
    &\geq \frac{1}{2}\Big(1+\frac{r(r-1)}{2}\lambda+\frac{r(r-1)(r-2)}{6}\tau\Big)^{\frac{n-2}{n}} + \frac{1}{2}\Big(1+\frac{r(r-1)}{2}\lambda-\frac{r(r-1)(r-2)}{6}\tau\Big)^{\frac{n-2}{n}}.
}
Note that for $C>0$,
\eq{\label{eq9}
    x \mapsto (C-x)^{\frac{n-2}{n}} + (C+x)^{\frac{n-2}{n}} \quad\hbox{is non-increasing for }0\leq x\leq C.
}
Since $\abs{\ell}\leq1$, we have
\eq{
    \abs{\tau} = \Abs{\fint \ell^3\,u^{\frac{2n}{n-2}}} \leq \fint \ell^2\,u^{\frac{2n}{n-2}} = \lambda.
}
Suppose $\lambda\geq\frac{3}{10}$. Then \eqref{eq8} implies that
\eq{
    &\Bigg( \fint \Big(\frac{1+\ell}{2}\Big)^{\frac{n}{n-2}}u^{\frac{2n}{n-2}} \Bigg)^{\frac{n-2}{n}} + \Bigg( \fint \Big(\frac{1-\ell}{2}\Big)^{\frac{n}{n-2}}u^{\frac{2n}{n-2}} \Bigg)^{\frac{n-2}{n}} \\
    &\geq \frac{1}{2}\Big(1+\frac{r(r-1)(r+1)}{6}\lambda\Big)^{\frac{n-2}{n}} + \frac{1}{2}\Big(1+\frac{r(r-1)(5-r)}{6}\lambda\Big)^{\frac{n-2}{n}} \\
    &\geq \frac{1}{2}\Big(1+\frac{r(r-1)(r+1)}{20}\Big)^{\frac{n-2}{n}} + \frac{1}{2}\Big(1+\frac{r(r-1)(5-r)}{20}\Big)^{\frac{n-2}{n}},
}
where we used \eqref{eq9} for $\tau$ in the first step, and $\lambda\geq\frac{3}{10}$ in the second step. Finally, note that
\eq{\label{eq10}
    (1+x)^{\frac{n-2}{n}} &\geq 1 + \frac{n-2}{n}x - \frac{\frac{n-2}{n}\big(1-\frac{n-2}{n}\big)}{2}x^2 \\
    &= 1 + \frac{n-2}{n}x - \frac{n-2}{n^2}x^2 \qquad\hbox{for}\quad x\geq0.
}
Also, we have
\eq{
    \frac{r(r-1)(r+1)}{20} + \frac{r(r-1)(5-r)}{20} = \frac{3r(r-1)}{10},
}
and
\eq{
    \Big(\frac{r(r-1)(r+1)}{20}\Big)^2 + \Big(\frac{r(r-1)(5-r)}{20}\Big)^2 = \frac{r^2(r-1)^2}{400}\Big((r+1)^2+(5-r)^2\Big) \leq \frac{r^2(r-1)^2}{20},
}
where we used $r\in(1,\frac{5}{3}]\implies (r+1)^2+(5-r)^2\leq20$ in the last step. Hence \eqref{eq10} implies
\eq{
    &\frac{1}{2}\Big(1+\frac{r(r-1)(r+1)}{20}\Big)^{\frac{n-2}{n}} + \frac{1}{2}\Big(1+\frac{r(r-1)(5-r)}{20}\Big)^{\frac{n-2}{n}} \\
    &\geq 1 + \frac{n-2}{n}\cdot \frac{3r(r-1)}{20} - \frac{n-2}{n^2}\cdot\frac{r^2(r-1)^2}{40} \\
    &= 1 + \frac{3}{10(n-2)} - \frac{1}{10(n-2)^3} > 1 + \frac{1}{n^2-3n+1},
}
which contradicts \eqref{eq5}.
Hence $\lambda<\frac{3}{10}$, and we complete the proof.
\end{proof}

We next use the Euler--Lagrange equation \eqref{eq:important}, written as, 
\eq{\label{eq11}
    -\Delta u + \kappa u = (\kappa+\mu\cdot x)u^{\frac{n+2}{n-2}},
}
to derive a Kazdan--Warner type identity for the Lagrange multiplier $\mu$.

\begin{lemma}\label{lem7.2}
We have
\eq{\label{eq17}
    \mathfrak{a}_n\fint xu^2 = -\frac{n-2}{2n}(\rid-M)\mu.
}
\end{lemma}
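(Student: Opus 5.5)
The plan is to derive \eqref{eq17} as a Kazdan--Warner type identity. I will test the Euler--Lagrange equation \eqref{eq11} against the two natural functions attached to a first-order harmonic $\ell=e\cdot x$, for an arbitrary unit vector $e\in\R^{n+1}$: the conformal-Killing derivative $\<\nabla u,\nabla\ell\>$ and the function $\ell u$. Combining the two resulting identities should eliminate the unknown gradient integral $\fint\ell\abs{\nabla u}^2$. Throughout I use $\nabla^2\ell=-\ell g_{\S^n}$, $-\Delta\ell=n\ell$, $\nabla(\mu\cdot x)=\mu-(\mu\cdot x)x$, and the normalization \eqref{eq0}, so that $\fint u^{\frac{2n}{n-2}}=1$ and $\fint x u^{\frac{2n}{n-2}}=0$.

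\emph{Step 1: test against $\<\nabla u,\nabla\ell\>$.} Integrating by parts with the Hessian identity gives
\[
\fint(-\Delta u)\<\nabla u,\nabla\ell\>=\fint\nabla^2\ell(\nabla u,\nabla u)+\tfrac12\fint\<\nabla\ell,\nabla\abs{\nabla u}^2\>=\tfrac{n-2}{2}\fint\ell\abs{\nabla u}^2 ,
\]
and $\fint u\<\nabla u,\nabla\ell\>=\tfrac n2\fint\ell u^2$. On the right-hand side I write $u^{\frac{n+2}{n-2}}\nabla u=\frac{n-2}{2n}\nabla u^{\frac{2n}{n-2}}$ and integrate by parts:
\[
\fint(\kappa+\mu\cdot x)u^{\frac{n+2}{n-2}}\<\nabla u,\nabla\ell\>=-\tfrac{n-2}{2n}\fint u^{\frac{2n}{n-2}}\Big(\<\mu,e-\ell x\>-n(\kappa+\mu\cdot x)\ell\Big).
\]

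\emph{Step 2: test against $\ell u$.} This yields
\[
\fint\ell\abs{\nabla u}^2+\Big(\tfrac n2+\kappa\Big)\fint\ell u^2=\fint(\kappa+\mu\cdot x)\ell u^{\frac{2n}{n-2}} .
\]
I solve this for $\fint\ell\abs{\nabla u}^2$ and substitute into Step 1. The terms $\frac{n-2}{2}\fint(\kappa+\mu\cdot x)\ell u^{\frac{2n}{n-2}}$ then cancel on both sides.

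\emph{Step 3: collect the $\fint\ell u^2$ terms.} Their coefficient is $-\frac{n-2}{2}\big(\frac n2+\kappa\big)+\frac n2\kappa=\kappa-\frac{n(n-2)}{4}=\mathfrak a_n$. For the $\mu$-term, $\fint u^{\frac{2n}{n-2}}\<\mu,e-\ell x\>=\mu\cdot e-\mu\cdot Me=\<(\rid-M)\mu,e\>$, using that $M$ is symmetric. This gives $\mathfrak a_n\<\fint xu^2,e\>=-\frac{n-2}{2n}\<(\rid-M)\mu,e\>$ for every $e$, which is \eqref{eq17}.

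The main obstacle is the bookkeeping of signs and constants in the integrations by parts, in particular the Hessian term. Regularity is not an issue: $u>0$ is a smooth solution by elliptic theory, so all integrations by parts are justified.
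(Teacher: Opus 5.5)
Your proposal is correct and is essentially the paper's proof. The paper tests \eqref{eq11} against $\<\nabla u,\nabla x_j\>$ and against $x_j u$, eliminates $\fint x_j\abs{\nabla u}^2$, and gets the coefficient $\frac n2\kappa-\frac{n-2}{2}\big(\frac n2+\kappa\big)=\mathfrak a_n$ exactly as you do. Using $\ell=e\cdot x$ for an arbitrary unit vector $e$ in place of the coordinate functions is only a cosmetic difference, and your bookkeeping checks out, including the Hessian term and $\fint u^{\frac{2n}{n-2}}\<\mu,e-\ell x\>=\<(\rid-M)\mu,e\>$.
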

\begin{proof}
Set $X_j\coloneqq\nabla x_j$. We test \eqref{eq11} against $X_j(u)=\<\nabla u,\nabla x_j\>$. We start with
\eq{\label{eq12}
    \fint (-\Delta u)X_j(u) &= \fint \nabla_{\nabla u}\nabla_{X_j}u = \fint (\nabla^2u)(\nabla u,X_j) + \fint \<\nabla_{\nabla u}\nabla x_j,\nabla u\> \\
    &= \frac{1}{2}\fint \nabla_{X_j}(\abs{\nabla u}^2) - \fint x_j\abs{\nabla u}^2 = -\frac{1}{2}\fint {\rm div}(\nabla x_j)\abs{\nabla u}^2  - \fint x_j\abs{\nabla u}^2 \\
    &= \frac{n-2}{2}\fint x_j\abs{\nabla u}^2,
}
where we used $\nabla^2x_j = -x_jg_{\S^n}$. Second,
\eq{\label{eq13}
    \fint uX_j(u) = \frac{1}{2}\fint X_j(u^2) = -\frac{1}{2}\fint {\rm div}(\nabla x_j)u^2 = \frac{n}{2}\fint x_ju^2.
}
Third,
\eq{\label{eq14}
    \fint (\kappa+\mu\cdot x)u^{\frac{n+2}{n-2}}X_j(u) &= \frac{n-2}{2n}\fint\<\nabla(u^{\frac{2n}{n-2}}),(\kappa+\mu\cdot x)X_j\> \\
    &= -\frac{n-2}{2n}\fint \Big(-nx_j(\kappa+\mu\cdot x)+X_j(\mu\cdot x)\Big)u^{\frac{2n}{n-2}} \\
    &= \frac{n-2}{2}\fint(\kappa+\mu\cdot x)x_ju^{\frac{2n}{n-2}} - \frac{n-2}{2n}\fint X_j(\mu\cdot x)u^{\frac{2n}{n-2}}.
}
Combining \eqref{eq11}, \eqref{eq12}, \eqref{eq13}, and \eqref{eq14} gives
\eq{\label{eq15}
    \frac{n-2}{2}\fint x_j\abs{\nabla u}^2 +  \frac{n}{2}\kappa\fint x_ju^2 = \frac{n-2}{2}\fint(\kappa+\mu\cdot x)x_ju^{\frac{2n}{n-2}} - \frac{n-2}{2n}\fint X_j(\mu\cdot x)u^{\frac{2n}{n-2}}.
}
We then test \eqref{eq11} against $x_ju$. Since
\eq{
    \fint (-\Delta u)x_ju = \fint \<\nabla u,\nabla(x_ju)\> = \fint x_j\abs{\nabla u}^2 + \frac{n}{2}\fint x_ju^2,
}
we have
\eq{\label{eq16}
    \fint x_j\abs{\nabla u}^2 + \Big(\frac{n}{2}+\kappa\Big)\fint x_ju^2 = \fint(\kappa+\mu\cdot x)x_ju^{\frac{2n}{n-2}}.
}
Combining \eqref{eq15} and \eqref{eq16} gives
\eq{
    \Big\{\frac{n}{2}\kappa - \frac{n-2}{2}\Big(\frac{n}{2}+\kappa\Big)\Big\} \fint x_ju^2 = -\frac{n-2}{2n}\fint X_j(\mu\cdot x)u^{\frac{2n}{n-2}}.
}
Since
\eq{
    \frac{n}{2}\kappa - \frac{n-2}{2}\Big(\frac{n}{2}+\kappa\Big) = \frac{n}{2}\Big(\frac{n(n-2)}{4}+\mathfrak{a}_n\Big) - \frac{n-2}{2}\Big(\frac{n}{2}+\frac{n(n-2)}{4}+\mathfrak{a}_n\Big) = \mathfrak{a}_n,
}
and
\eq{
    X_j(\mu\cdot x) = \<e_j-(e_j\cdot x)x,\mu-(\mu\cdot x)x\> = e_j\cdot\mu - (e_j\cdot x)(\mu\cdot x),
}
we have
\eq{
    \mathfrak{a}_n\fint x_ju^2 = -\frac{n-2}{2n}\Big( e_j\cdot\mu - \fint (e_j\cdot x)(\mu\cdot x)u^{\frac{2n}{n-2}}\Big).
}
The claim follows.
\end{proof}
The Kazdan-Warner identity 
\eqref{eq17} implies $\mathfrak{a}_n>0$, which is a known result as mentioned   in the previous section.

\begin{corollary}
$\mathfrak{a}_n>0$.
\end{corollary}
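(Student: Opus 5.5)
The plan is to use the Kazdan--Warner identity \eqref{eq17} of Lemma~\ref{lem7.2} to show that $\mathfrak{a}_n=0$ would force the Lagrange multiplier $\mu$ to vanish, and then to get a contradiction from the classification of positive solutions of the critical equation. As in this section, I work with a positive minimizer $u$ satisfying \eqref{eq11} and \eqref{eq0}, whose existence is taken from Section~\ref{sec9}; if $\mathfrak{a}_n>c_n$ there is nothing to prove.

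\textbf{Step 1: $\mathfrak{a}_n\ge 0$.} This is immediate from the definition \eqref{eq:a_n0}. By the sharp Sobolev inequality \eqref{eq:sharp-sobolev-Sn} the numerator of the quotient is nonnegative, and the denominator is positive on the admissible class. So it remains to exclude $\mathfrak{a}_n=0$.

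\textbf{Step 2: if $\mathfrak{a}_n=0$ then $\mu=0$.} Suppose $\mathfrak{a}_n=0$. Then \eqref{eq17} gives $(\rid-M)\mu=0$. I claim $\rid-M$ is positive definite. For any unit vector $e\in\R^{n+1}$, using the normalization $\fint u^{\frac{2n}{n-2}}=1$,
\[
e^TMe=\fint (e\cdot x)^2u^{\frac{2n}{n-2}}<\fint u^{\frac{2n}{n-2}}=1 .
\]
The inequality is strict because $(e\cdot x)^2<1$ away from the two points $\pm e$, and $u>0$. Hence $\rid-M$ is invertible and $\mu=0$.

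\textbf{Step 3: contradiction.} With $\mu=0$ and $\kappa=\frac{n(n-2)}{4}$, equation \eqref{eq11} becomes the Yamabe equation
\[
-\Delta u+\frac{n(n-2)}{4}u=\frac{n(n-2)}{4}u^{\frac{n+2}{n-2}},\qquad u>0,
\]
on the round sphere. By the Obata/Gidas--Ni--Nirenberg classification, $u$ is a pulled-back bubble $c\big(\frac{1-|a|^2}{|\xi-a|^2}\big)^{\frac{n-2}{2}}$, with $c$ fixed by \eqref{eq0}.

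Alternatively, one can avoid the PDE classification. If $\mathfrak{a}_n=0$, the minimizer has vanishing numerator, so it attains equality in \eqref{eq:sharp-sobolev-Sn}, and Theorem~\ref{thm:sharp-sobolev-sphere} gives the same bubble form directly.

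It remains to use the barycenter condition $\int x\,u^{\frac{2n}{n-2}}=0$ to force $a=0$. For $a\ne0$, the measure $u^{\frac{2n}{n-2}}\,\rd\sigma$ is the pushforward of the uniform measure under a conformal map, and it concentrates toward $a/|a|$. Hence its first moment is a nonzero multiple of $a$ pointing along $a$. This is a one-variable monotonicity computation, which I expect to be the only slightly delicate point.

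Thus $u\equiv1$. But then $\omega_n^{\frac2n}\norm{u}_{\frac{2n}{n-2}}^2-\int u^2=\omega_n-\omega_n=0$, which contradicts the admissibility constraint that the denominator in \eqref{eq:a_n0} be strictly positive. Therefore $\mathfrak{a}_n>0$.
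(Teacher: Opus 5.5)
Your argument is correct and follows the paper's proof. The Kazdan--Warner identity \eqref{eq17} with $\mathfrak{a}_n=0$ gives $(\rid-M)\mu=0$, hence $\mu=0$, and the resulting Yamabe equation contradicts the non-constancy of the minimizer from Theorem~\ref{thm:attainment}. The differences are minor: you get invertibility of $\rid-M$ from the elementary bound $e^TMe<1$ instead of Lemma~\ref{lem2.1}, you make explicit the barycenter step forcing the bubble to be the constant $1$ (which the paper leaves implicit), and your alternative via equality in the sharp Sobolev inequality makes the Kazdan--Warner step unnecessary altogether.
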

\begin{proof}
By \eqref{eq17}, $\mathfrak{a}_n=0$ implies $M\mu=\mu$. But by Lemma~\ref{lem2.1}, the maximal eigenvalue of $M$ satisfies $\lambda<\frac{3}{10}$. Hence $\mu=0$. As a consequence, the Euler--Lagrange equation \eqref{eq:important} becomes the Yamabe equation, hence $u$ is constant modulo conformal transformations, which contradicts to Theorem \ref{thm:attainment} below.
\end{proof}

Therefore, it suffices to consider $\mathfrak{a}_n>0$.

\begin{lemma}\label{lem7.4}
We have
\eq{\label{eq23}
    \Big\{ \Big( \frac{n(n-2)}{8\mathfrak{a}_n} + \frac{n-2}{2n} \Big)(1-\lambda) + \lambda \Big\}\abs{\mu} \leq \kappa\Big( 1 - \fint u^2 \Big).
}
\end{lemma}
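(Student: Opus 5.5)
The plan is to test the Euler--Lagrange equation \eqref{eq11} against $\ell u$, with $\ell$ now the linear function in the direction of $\mu$. I will then eliminate the term $\fint \ell u^2$ using the Kazdan--Warner identity \eqref{eq17}. What remains is bounded by the Dirichlet energy, which is $\kappa(1-\fint u^2)$.

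\emph{Setup.} If $\mu=0$ there is nothing to prove, so assume $\mu\neq0$. Set $e\coloneqq\mu/\abs{\mu}$, $\ell\coloneqq e\cdot x$ and $m\coloneqq e\cdot Me=\fint \ell^2u^{\frac{2n}{n-2}}$. Since $\lambda$ is the largest eigenvalue of $M$, we have $0\le m\le\lambda$.

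\emph{Step 1: testing against $\ell u$.} Contracting \eqref{eq16} with $e$ gives
\[
\fint \ell\abs{\nabla u}^2+\Big(\frac n2+\kappa\Big)\fint \ell u^2=\fint(\kappa+\abs{\mu}\ell)\ell u^{\frac{2n}{n-2}}=\abs{\mu}\,m .
\]
Here the $\kappa$-term drops out because of the barycenter condition \eqref{eq0}.

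\emph{Step 2: Kazdan--Warner.} Contracting \eqref{eq17} with $e$ gives
\[
\mathfrak a_n\fint\ell u^2=-\frac{n-2}{2n}\abs{\mu}(1-m).
\]
By the corollary $\mathfrak a_n>0$, so we can solve for $\fint \ell u^2$ and substitute into Step 1:
\[
-\fint \ell\abs{\nabla u}^2=\abs{\mu}\Big\{\Big(\frac n2+\kappa\Big)\frac{n-2}{2n\mathfrak a_n}(1-m)-m\Big\}.
\]
A short computation using $\kappa=\frac{n(n-2)}4+\mathfrak a_n$ gives
\[
\Big(\frac n2+\kappa\Big)\frac{n-2}{2n\mathfrak a_n}=\frac{n(n-2)}{8\mathfrak a_n}+\frac{n-2}{2n}\eqcolon C .
\]

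\emph{Step 3: energy bound and sign.} Since $\abs{\ell}\le1$, we have $\abs{\fint\ell\abs{\nabla u}^2}\le\fint\abs{\nabla u}^2=\kappa(1-\fint u^2)$ by \eqref{eq3}. This yields
\[
\abs{\mu}\,\big\{C(1-m)-m\big\}\le\kappa\Big(1-\fint u^2\Big).
\]

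This is the step I expect to be the main obstacle. The direct bound carries $-m$, whereas \eqref{eq23} needs $+m$, and Step 2 sees $M$ only through $e\cdot Me=m$, not through $\lambda$. Two repairs are needed.

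First, to get $+m$, I will try to bound $\fint\ell\abs{\nabla u}^2$ one-sidedly, or replace the test function by $(1+\ell)u$-type combinations, so that the $\abs{\mu}m$ term enters with the favorable sign. I will check which sign $\fint \ell u^2$ must have, which is fixed by Step 2.

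Second, to pass from $m$ to $\lambda$: once the left side has the form $\abs{\mu}\{C(1-m)+m\}$, note that this is non-increasing in $m$ because $C\ge1$. Indeed, under the standing assumption \eqref{eq-1} we have $\frac{n(n-2)}{8\mathfrak a_n}\ge\frac{n^2-3n+1}2\ge1$. Hence $m\le\lambda$ gives $C(1-m)+m\ge C(1-\lambda)+\lambda$, which is \eqref{eq23}.
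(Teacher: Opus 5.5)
Your route is the paper's route: contract \eqref{eq16} and \eqref{eq17} with the direction of $\mu$, bound the gradient term by $\fint\abs{\nabla u}^2=\kappa(1-\fint u^2)$, then use monotonicity in $m$ together with $m\le\lambda$. However, Step 2 contains a sign error, and that error is the entire source of your ``main obstacle''. Step 2 gives $\fint\ell u^2=-\frac{n-2}{2n\mathfrak a_n}\abs{\mu}(1-m)<0$. Substituting this into Step 1 gives
\[
\fint\ell\abs{\nabla u}^2=\abs{\mu}m-\Big(\frac n2+\kappa\Big)\fint\ell u^2=\abs{\mu}\big\{C(1-m)+m\big\},
\]
and not $-\fint\ell\abs{\nabla u}^2=\abs{\mu}\{C(1-m)-m\}$ as you wrote. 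You dropped the minus sign of $\fint\ell u^2$ when substituting. The negative sign of $\fint\ell u^2$, which you said you would check, is exactly what makes the two contributions add. As written, the proposal does not prove the lemma. The inequality $\abs{\mu}\{C(1-m)-m\}\le\kappa(1-\fint u^2)$ that you derive carries $-m$ where \eqref{eq23} needs $+m$, so it is strictly weaker, and it rests on a false identity. The repairs you suggest (one-sided bounds, $(1+\ell)u$-type test functions) are never specified.

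Once the sign is corrected, no repair is needed. The one-sided bound $\fint\ell\abs{\nabla u}^2\le\fint\abs{\nabla u}^2=\kappa(1-\fint u^2)$ follows from $\abs{\ell}\le1$ and \eqref{eq3}. It immediately gives $\abs{\mu}\{C(1-m)+m\}\le\kappa(1-\fint u^2)$. Your second step then finishes the proof: $C\ge1$ under \eqref{eq-1}, so $C(1-m)+m$ is non-increasing in $m$, and $m\le\lambda$. This is exactly the paper's argument, with your $m$ playing the role of the paper's $s=\abs{\mu}^{-2}\langle M\mu,\mu\rangle$.
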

\begin{proof}
It suffices to consider $\mu\neq0$. Set
\eq{\label{eq22}
    s\coloneqq \abs{\mu}^{-2}\fint (\mu\cdot x)^2u^{\frac{2n}{n-2}} = \abs{\mu}^{-2}\<M\mu,\mu\> \leq \lambda.
}
Testing \eqref{eq17} against $\mu$ gives
\eq{\label{eq18}
    \mathfrak{a}_n\fint(\mu\cdot x)u^2 = -\frac{n-2}{2n}\fint \big(\abs{\mu}^2-(\mu\cdot x)^2\big)u^{\frac{2n}{n-2}} = -\frac{n-2}{2n}\abs{\mu}^2(1-s).
}
Since $\mathfrak{a}_n>0$ and $s\leq\lambda<\frac{3}{10}$, it follows
\eq{
    \fint(\mu\cdot x)u^2<0.
}
Note that \eqref{eq16} implies
\eq{
    \fint x\abs{\nabla u}^2 + \Big(\frac{n}{2}+\kappa\Big)\fint xu^2 = \fint (\mu\cdot x)x\,u^{\frac{2n}{n-2}}.
}
Testing against $\mu$ gives
\eq{\label{eq19}
    \fint (\mu\cdot x)\abs{\nabla u}^2 + \Big(\frac{n}{2}+\kappa\Big)\fint (\mu\cdot x)u^2 = \fint (\mu\cdot x)^2\,u^{\frac{2n}{n-2}} = \abs{\mu}^2s.
}
By Cauchy--Schwarz we have
\eq{\label{eq20}
    \fint (\mu\cdot x)\abs{\nabla u}^2 \leq \abs{\mu}\fint \abs{\nabla u}^2 = \kappa\abs{\mu}\Big(1-\fint u^2\Big),
}
where we used \eqref{eq3}. Combining \eqref{eq18}, \eqref{eq19}, and \eqref{eq20} gives
\eq{
    \abs{\mu}^2s \leq \kappa\abs{\mu}\Big(1-\fint u^2\Big) - \Big(\frac{n}{2}+\kappa\Big)\frac{n-2}{2n\mathfrak{a}_n}\abs{\mu}^2(1-s),
}
or equivalently
\eq{\label{eq21}
    \Big\{ \Big(\frac{n(n-2)}{8\mathfrak{a}_n}+\frac{n-2}{2n}\Big)(1-s)+s \Big\}\abs{\mu} \leq \kappa\Big( 1 - \fint u^2 \Big).
}
Note that
\eq{
    \mathfrak{a}_n\leq \frac{n(n-2)}{4(n^2-3n+1)} \implies \frac{n(n-2)}{8\mathfrak{a}_n}>1,
}
hence the left-hand side of \eqref{eq21} is decreasing in $s$. Then \eqref{eq22} implies
\eq{
    \Big(\frac{n(n-2)}{8\mathfrak{a}_n}+\frac{n-2}{2n}\Big)(1-s)+s \geq \Big(\frac{n(n-2)}{8\mathfrak{a}_n}+\frac{n-2}{2n}\Big)(1-\lambda)+\lambda.
}
The claim follows.
\end{proof}

\begin{lemma}
We have
\eq{\label{eq23.5}
    \Big\{ \Big( \frac{n(n-2)}{8\mathfrak{a}_n} + \frac{n-2}{2n} \Big)(1-\lambda) + \lambda \Big\}\Abs{\fint (\mu\cdot x)\ell^2\,u^{\frac{2n}{n-2}}} \leq \lambda\sqrt{1-\lambda}\,\kappa\Big( 1 - \fint u^2 \Big).
}
\end{lemma}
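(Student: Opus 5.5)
The plan is to reduce the statement to Lemma~\ref{lem7.4} by proving the pointwise-in-$\mu$ moment bound
\eq{\label{eq:plan_moment}
    \Abs{\fint (\mu\cdot x)\ell^2\,u^{\frac{2n}{n-2}}} \leq \lambda\sqrt{1-\lambda}\,\abs{\mu}.
}
Since $\lambda<\frac{3}{10}$ by Lemma~\ref{lem2.1} and $\frac{n(n-2)}{8\mathfrak{a}_n}>1$ (as in the proof of Lemma~\ref{lem7.4}), the bracket on the left of \eqref{eq23} is positive. So multiplying \eqref{eq:plan_moment} by this bracket and then applying \eqref{eq23} gives \eqref{eq23.5} directly. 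We may assume $\mu\neq0$, and we set $w\coloneqq\mu/\abs{\mu}$.

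Throughout, write $\rd\nu\coloneqq u^{\frac{2n}{n-2}}\,\rd V/\omega_n$. By \eqref{eq0} this is a probability measure on $\S^n$ with barycenter $0$. The key observation is that the barycenter constraint lets us subtract the constant $\lambda$ from $\ell^2$ at no cost:
\eq{
    \fint (w\cdot x)\,\ell^2\,u^{\frac{2n}{n-2}} = \int (w\cdot x)\,(\ell^2-\lambda)\,\rd\nu,
}
because $\int (w\cdot x)\,\rd\nu=0$. Cauchy--Schwarz then gives
\eq{
    \Abs{\int (w\cdot x)(\ell^2-\lambda)\,\rd\nu}\leq \Big(\int (w\cdot x)^2\,\rd\nu\Big)^{\frac12}\Big(\int(\ell^2-\lambda)^2\,\rd\nu\Big)^{\frac12}.
}

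We bound the two factors separately.

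\textbf{First factor.} By definition of $M$, $\int(w\cdot x)^2\,\rd\nu=\<Mw,w\>\leq\lambda$, since $\lambda$ is the largest eigenvalue of $M$ and $\abs{w}=1$.

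\textbf{Second factor.} Using $\int\ell^2\,\rd\nu=\lambda$ from \eqref{eq-3}, we expand
\eq{
    \int(\ell^2-\lambda)^2\,\rd\nu=\int\ell^4\,\rd\nu-\lambda^2 .
}
Since $\abs{\ell}\leq1$ we have $\ell^4\leq\ell^2$, and hence
\eq{
    \int\ell^4\,\rd\nu\leq\int\ell^2\,\rd\nu=\lambda .
}
Therefore the second factor is at most $\sqrt{\lambda-\lambda^2}$.

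Multiplying the two bounds gives $\sqrt{\lambda}\cdot\sqrt{\lambda(1-\lambda)}=\lambda\sqrt{1-\lambda}$. Scaling back by $\abs{\mu}$ yields \eqref{eq:plan_moment}.

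The main point to get right is the subtraction of $\lambda$. A naive estimate instead decomposes $x=\ell e+x^\perp$ and bounds $\abs{\int\ell^2 x\,\rd\nu}$ directly; this only yields $\lambda\abs{\mu}$ and loses the factor $\sqrt{1-\lambda}$. That factor is presumably what the subsequent estimate of $Q(v,v)$ needs. Recentering $\ell^2$ via the barycenter condition recovers it, and after that the argument is two elementary Cauchy--Schwarz steps.
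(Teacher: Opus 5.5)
Your proof is correct and follows the paper's argument essentially verbatim. You recenter $\ell^2$ by $\lambda$ using the barycenter condition, apply Cauchy--Schwarz with respect to $u^{\frac{2n}{n-2}}$, bound $\langle M\mu,\mu\rangle\le\lambda\abs{\mu}^2$ and $\fint(\ell^2-\lambda)^2u^{\frac{2n}{n-2}}\le\lambda-\lambda^2$ via $\ell^4\le\ell^2$, and conclude with Lemma~\ref{lem7.4}.
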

\begin{proof}
Note that
\eq{
    \fint (\mu\cdot x)\ell^2\,u^{\frac{2n}{n-2}} = \fint (\mu\cdot x)(\ell^2-\lambda)u^{\frac{2n}{n-2}}.
}
By H\"older's inequality we have
\eq{\label{eq24}
    \Big( \fint (\mu\cdot x)\ell^2\,u^{\frac{2n}{n-2}} \Big)^2 &\leq \fint (\mu\cdot x)^2 u^{\frac{2n}{n-2}}\cdot\fint (\ell^2-\lambda)^2 u^{\frac{2n}{n-2}} = \<M\mu,\mu\> \cdot \fint (\ell^4-2\lambda\ell^2+\lambda^2) u^{\frac{2n}{n-2}} \\
    &\leq \lambda\abs{\mu}^2\cdot\fint (\ell^2-2\lambda\ell^2+\lambda^2) u^{\frac{2n}{n-2}} \leq \lambda\abs{\mu}^2(\lambda-2\lambda^2+\lambda^2) = \lambda^2(1-\lambda)\abs{\mu}^2,
}
where we used $\abs{\ell}\leq1$ and \eqref{eq-3}. The claim then follows from \eqref{eq23} and \eqref{eq24}.
\end{proof}

\subsection{An explicit formula for \texorpdfstring{$Q(v,v)$}{Q(v,v)}}

Recall that the second variation can be written as
\eq{
    Q(v,v) &= \int \Big\{ \abs{\nabla v}^2 + \Big(\frac{n(n-2)}{4}+\mathfrak{a}_n\Big)v^2 \Big\} - \frac{n+2}{n-2}\int \Big(\frac{n(n-2)}{4}+\mathfrak{a}_n + \mu\cdot x\Big)u^{\frac{4}{n-2}}v^2 \\
    &\eqcolon \int vLv,
}
where the stability operator is
\eq{
    L = -\Delta + \kappa - \frac{n+2}{n-2}(\kappa+\mu\cdot x)u^{\frac{4}{n-2}}.
}

\begin{lemma}
We have
\eq{\label{eq27.0}
    L(\ell u) = 2\ell u-2w-\frac{4}{n-2}(\kappa+\mu\cdot x)\ell\,u^{\frac{n+2}{n-2}},
}
and
\eq{\label{eq27}
    Lw = 2\mathfrak{a}_n\ell u + \big( \mu\cdot e - (\mu\cdot x)\ell \big)u^{\frac{n+2}{n-2}}.
}
\end{lemma}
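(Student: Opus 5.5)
The two identities \eqref{eq27.0} and \eqref{eq27} are direct computations from the Euler--Lagrange equation \eqref{eq11}. For $w$ I will avoid a brute-force commutator computation and use the conformal covariance of the conformal Laplacian instead. Throughout I write $p\coloneqq\frac{n+2}{n-2}$ and $P\coloneqq-\Delta+\frac{n(n-2)}{4}$. Since $\kappa=\frac{n(n-2)}{4}+\mathfrak{a}_n$, equation \eqref{eq11} reads
\eq{
Pu=-\mathfrak{a}_n u+(\kappa+\mu\cdot x)u^{p},
}
and the stability operator is
\eq{
L=P+\mathfrak{a}_n-p(\kappa+\mu\cdot x)u^{p-1}.
}

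\emph{Formula \eqref{eq27.0}.} I expand $-\Delta(\ell u)=\ell(-\Delta u)-2\<\nabla\ell,\nabla u\>+n\ell u$, using $-\Delta\ell=n\ell$. Substituting $\ell(-\Delta u)+\kappa\ell u=(\kappa+\mu\cdot x)\ell u^{p}$ from \eqref{eq11} gives
\eq{
L(\ell u)=(1-p)(\kappa+\mu\cdot x)\ell u^{p}-2\<\nabla\ell,\nabla u\>+n\ell u.
}
By the definition of $w$ we have $-2\<\nabla\ell,\nabla u\>=-2w-(n-2)\ell u$. Combining this with $1-p=-\frac{4}{n-2}$ yields exactly \eqref{eq27.0}.

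\emph{Formula \eqref{eq27}.} I use the orbit $u_t=J_{\phi_t}^{\frac{n-2}{2n}}\,u\circ\phi_t$, whose derivative at $t=0$ is $w$. The conformal covariance of $P$ on $\S^n$ gives
\eq{
P(u_t)=J_{\phi_t}^{\frac{n+2}{2n}}(Pu)\circ\phi_t .
}
Substituting the equation for $Pu$, and using $J_{\phi_t}^{\frac{n+2}{2n}}\,u\circ\phi_t=J_{\phi_t}^{\frac2n}u_t$ together with $J_{\phi_t}^{\frac{n+2}{2n}}(u\circ\phi_t)^p=u_t^p$, I obtain the one-parameter family of equations
\eq{
P u_t=-\mathfrak{a}_n J_{\phi_t}^{\frac2n}u_t+\big(\kappa+\mu\cdot\phi_t(x)\big)u_t^{p}.
}
I then differentiate at $t=0$, using three facts:
\begin{enumerate}
\item $\frac{\rd}{\rd t}\big|_{t=0}J_{\phi_t}^{2/n}=\frac2n{\rm div}\nabla\ell=-2\ell$;
\item $\frac{\rd}{\rd t}\big|_{t=0}\,\mu\cdot\phi_t(x)=\<\mu,\nabla\ell\>=\mu\cdot e-(\mu\cdot x)\ell$;
\item $\frac{\rd}{\rd t}\big|_{t=0}u_t^p=pu^{p-1}w$.
\end{enumerate}
This gives
\eq{
Pw=-\mathfrak{a}_n w+2\mathfrak{a}_n\ell u+p(\kappa+\mu\cdot x)u^{p-1}w+\big(\mu\cdot e-(\mu\cdot x)\ell\big)u^{p}.
}
Moving the $w$-terms to the left, this is precisely $Lw=2\mathfrak{a}_n\ell u+\big(\mu\cdot e-(\mu\cdot x)\ell\big)u^p$, i.e.\ \eqref{eq27}.

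The only delicate point is justifying the covariance identity and the differentiation in $t$. Covariance is the standard transformation law of the conformal Laplacian under conformal diffeomorphisms, with $J_{\phi_t}^{2/n}$ the conformal factor of $\phi_t^*g_{\S^n}$. Differentiating in $t$ requires some smoothness of $u$, which follows from elliptic regularity for \eqref{eq11}, since $u>0$ is a critical-exponent solution with smooth coefficients. As a fallback, one can verify \eqref{eq27} directly. Apply $\nabla_{\nabla\ell}$ to \eqref{eq11} and use the commutator $[\Delta,\nabla_{\nabla\ell}]u=-2\ell\Delta u-(n-2)\<\nabla\ell,\nabla u\>$, which holds because $\nabla^2\ell=-\ell g$ and ${\rm Ric}=(n-1)g$. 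Then combine the result with $-\frac{n-2}{2}$ times the computation of $L(\ell u)$. This is more error-prone bookkeeping, so I would use it only as a check.
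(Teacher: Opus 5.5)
Your proof is correct. You obtain the first identity exactly as the paper does, by expanding $-\Delta(\ell u)$ and inserting \eqref{eq11}.

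For the second identity you take a genuinely different route. The paper works locally. It computes $L\langle\nabla u,\nabla\ell\rangle$ from the Bochner formula, using $\nabla^2\ell=-\ell g$, ${\rm Ric}=(n-1)g$ and the Euler--Lagrange equation, and then subtracts $\frac{n-2}{2}L(\ell u)$. You instead differentiate at $t=0$ the conformally transformed equation $Pu_t=-\mathfrak{a}_nJ_{\phi_t}^{2/n}u_t+(\kappa+\mu\cdot\phi_t(x))u_t^{p}$. Your exponent bookkeeping in the transformation law and the three derivatives at $t=0$ all check out.

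What each approach buys:
\begin{itemize}
\item The paper's computation is self-contained and elementary, but requires more bookkeeping.
\item Yours needs the transformation law of the conformal Laplacian and enough smoothness of $u$ to differentiate in $t$. You supply the latter correctly by elliptic regularity; the paper also has $u\in C^\infty$ from Theorem~\ref{thm:attainment}.
\item In exchange, yours explains the structure of \eqref{eq27}. The function $w$ is the Jacobi field generated by the conformal group, so it would lie in $\ker L$ if the equation were conformally invariant. The two terms on the right-hand side are exactly the infinitesimal symmetry breaking caused by the non-covariant term $-\mathfrak{a}_n u$, which picks up the factor $J_{\phi_t}^{2/n}$, and by the non-invariant weight $\mu\cdot x$.
\end{itemize}

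One small correction to your fallback check. The commutator is $[\Delta,\nabla_{\nabla\ell}]u=-2\ell\Delta u+(n-2)\langle\nabla\ell,\nabla u\rangle$, with a plus sign on the second term. You can test this on $u=\ell$: both sides equal $(n+2)\ell^2+n-2$. With the sign you wrote, the verification would not close. Since your main argument does not rely on it, this is harmless.
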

\begin{proof}
First, using the Euler--Lagrange equation \eqref{eq11} we have
\eq{\label{eq25}
    L(\ell u) &= -\Delta(\ell u) + \kappa\ell u - \frac{n+2}{n-2}(\kappa+\mu\cdot x)u^{\frac{n+2}{n-2}}\ell \\
    &= n\ell u - 2\<\nabla u,\nabla\ell\> + \Big( -\Delta u + \kappa u - \frac{n+2}{n-2}(\kappa+\mu\cdot x)u^{\frac{n+2}{n-2}} \Big)\ell \\
    &= n\ell u - 2\Big( w + \frac{n-2}{2}\ell u\Big) - \frac{4}{n-2}(\kappa+\mu\cdot x)u^{\frac{n+2}{n-2}}\ell \\
    &= 2\ell u-2w-\frac{4}{n-2}(\kappa+\mu\cdot x)\ell\,u^{\frac{n+2}{n-2}}. 
}
Second, the Bochner formula on $\S^n$ implies
\eq{
    -\Delta\<\nabla u,\nabla\ell\> &= -\<\nabla\Delta u,\nabla\ell\> - \<\nabla u,\nabla\Delta\ell\> - 2\<\nabla^2u,\nabla^2\ell\> - 2(n-1)\<\nabla u,\nabla\ell\> \\
    &= -\<\nabla\Delta u,\nabla\ell\> + n\<\nabla u,\nabla\ell\> + 2\ell\Delta u - 2(n-1)\<\nabla u,\nabla\ell\> \\
    &= -\<\nabla\Delta u,\nabla\ell\> - (n-2)\<\nabla u,\nabla\ell\> + 2\ell\Delta u.
}
Using the Euler--Lagrange equation \eqref{eq11}, we have
\eq{
    &-\Delta\<\nabla u,\nabla\ell\> = \<\nabla( -\kappa u + (\kappa+\mu\cdot x)u^{\frac{n+2}{n-2}} ),\nabla\ell\> - (n-2)\<\nabla u,\nabla\ell\> - 2\ell( -\kappa u + (\kappa+\mu\cdot x)u^{\frac{n+2}{n-2}} ) \\
    &= \Big(-(\kappa+n-2)+\frac{n+2}{n-2}(\kappa+\mu\cdot x)u^{\frac{4}{n-2}}\Big)\<\nabla u,\nabla\ell\> + u^{\frac{n+2}{n-2}}\<\nabla(\mu\cdot x),\nabla\ell\> + 2\kappa\ell u - 2\ell(\kappa+\mu\cdot x)u^{\frac{n+2}{n-2}}
}
Hence
\eq{\label{eq26}
    L\<\nabla u,\nabla\ell\> &= -\Delta\<\nabla u,\nabla\ell\> + \kappa\<\nabla u,\nabla\ell\> - \frac{n+2}{n-2}(\kappa+\mu\cdot x)u^{\frac{4}{n-2}}\<\nabla u,\nabla\ell\> \\
    &= -(n-2)\<\nabla u,\nabla\ell\> + u^{\frac{n+2}{n-2}}\<\nabla(\mu\cdot x),\nabla\ell\> + 2\kappa\ell u - 2\ell(\kappa+\mu\cdot x)u^{\frac{n+2}{n-2}}.
}
Combining \eqref{eq25} and \eqref{eq26}, and using $\<\nabla(\mu\cdot x),\nabla\ell\>=\mu\cdot e - (\mu\cdot x)\ell$, we obtain \eqref{eq27}.    
\end{proof}

\begin{lemma}
We have
\eq{\label{eq28}
    \fint \ell uw = -\frac{1}{2}\fint u^2 + \frac{3}{2}\fint \ell^2u^2,
}
and
\eq{\label{eq29}
    \fint (\mu\cdot x)\ell u^{\frac{n+2}{n-2}}w = \frac{n-2}{n} \fint (\mu\cdot x)\ell^2\,u^{\frac{2n}{n-2}}.
}
\end{lemma}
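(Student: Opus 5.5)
Both identities follow by inserting $w=\<\nabla u,\nabla\ell\>-\frac{n-2}{2}\ell u$, writing the gradient term as a total derivative, and integrating by parts once on $\S^n$. Throughout we use $\Delta\ell=-n\ell$ and $\abs{\nabla\ell}^2=1-\ell^2$. For \eqref{eq29} we also use $\<\nabla(\mu\cdot x),\nabla\ell\>=\mu\cdot e-(\mu\cdot x)\ell$ and the constraints \eqref{eq0}. No estimate is involved. The only step needing care is the divergence computation in \eqref{eq29} and its bookkeeping; there is no real obstacle.

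\emph{Identity \eqref{eq28}.} Write $\ell u\<\nabla u,\nabla\ell\>=\frac14\<\nabla(u^2),\nabla(\ell^2)\>$. Integrating by parts gives
\eq{
\fint \ell u\<\nabla u,\nabla\ell\> = -\frac14\fint u^2\Delta(\ell^2).
}
We have $\Delta(\ell^2)=2\abs{\nabla\ell}^2+2\ell\Delta\ell=2(1-\ell^2)-2n\ell^2$. Hence this integral equals
\eq{
-\frac12\fint u^2+\frac{n+1}{2}\fint\ell^2u^2 .
}
Subtracting $\frac{n-2}{2}\fint \ell^2u^2$, which is the contribution of $-\frac{n-2}{2}\ell u$ in $w$, yields exactly $-\frac12\fint u^2+\frac32\fint\ell^2u^2$.

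\emph{Identity \eqref{eq29}.} Use $u^{\frac{n+2}{n-2}}\nabla u=\frac{n-2}{2n}\nabla(u^{\frac{2n}{n-2}})$. Integrating by parts gives
\eq{
\fint (\mu\cdot x)\ell u^{\frac{n+2}{n-2}}\<\nabla u,\nabla\ell\> = -\frac{n-2}{2n}\fint u^{\frac{2n}{n-2}}\,{\rm div}\big((\mu\cdot x)\ell\nabla\ell\big).
}
Expanding the divergence,
\eq{
{\rm div}\big((\mu\cdot x)\ell\nabla\ell\big)=\ell\big(\mu\cdot e-(\mu\cdot x)\ell\big)+(\mu\cdot x)(1-\ell^2)-n(\mu\cdot x)\ell^2 .
}
This simplifies to $(\mu\cdot e)\ell+(\mu\cdot x)-(n+2)(\mu\cdot x)\ell^2$.

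After multiplying by $u^{\frac{2n}{n-2}}$ and averaging, the first two terms vanish by the constraints \eqref{eq0}. Indeed, $\fint\ell u^{\frac{2n}{n-2}}=0$ and $\fint (\mu\cdot x)u^{\frac{2n}{n-2}}=0$. What remains is $\frac{(n-2)(n+2)}{2n}\fint(\mu\cdot x)\ell^2u^{\frac{2n}{n-2}}$.

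Finally, subtract the contribution of $-\frac{n-2}{2}\ell u$ in $w$, namely $\frac{n-2}{2}\fint(\mu\cdot x)\ell^2u^{\frac{2n}{n-2}}$. The coefficient becomes
\eq{
\frac{(n-2)(n+2)}{2n}-\frac{n-2}{2}=\frac{n-2}{n},
}
which is \eqref{eq29}.
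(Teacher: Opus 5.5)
Your proof is correct and follows the paper's argument. Identity \eqref{eq28} is handled identically. For \eqref{eq29} the only difference is bookkeeping: the paper first writes $u^{\frac{n+2}{n-2}}w=\frac{n-2}{2n}{\rm div}\big(u^{\frac{2n}{n-2}}\nabla\ell\big)$, so the $-\frac{n-2}{2}\ell u$ part of $w$ is absorbed before integrating by parts, whereas you integrate by parts only the gradient term and subtract $\frac{n-2}{2}\fint(\mu\cdot x)\ell^2u^{\frac{2n}{n-2}}$ afterwards, using the constraints $\fint \ell u^{\frac{2n}{n-2}}=0=\fint(\mu\cdot x)u^{\frac{2n}{n-2}}$ in the same way.
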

\begin{proof}
First,
\eq{
    \fint \ell uw &= \fint \ell u\Big(\<\nabla u,\nabla\ell\> - \frac{n-2}{2}\ell u\Big) = -\frac{1}{4}\fint u^2\Delta(\ell^2) - \frac{n-2}{2}\fint \ell^2u^2 \\
    &= -\frac{1}{2}\fint u^2\Big( \ell\Delta\ell + \abs{\nabla\ell}^2 \Big) - \frac{n-2}{2}\fint \ell^2u^2 \\
    &= -\frac{1}{2}\fint u^2\Big( -n\ell^2 + 1 - \ell^2 \Big) - \frac{n-2}{2}\fint \ell^2u^2 \\
    &= -\frac{1}{2}\fint u^2 + \frac{3}{2}\fint \ell^2u^2.
}
Second, using
\eq{
    {\rm div}(u^{\frac{2n}{n-2}}\nabla\ell) = \frac{2n}{n-2}u^{\frac{n+2}{n-2}}\<\nabla u,\nabla\ell\> - nu^{\frac{2n}{n-2}}\ell = \frac{2n}{n-2}u^{\frac{n+2}{n-2}}w,
}
we have
\eq{
    \fint (\mu\cdot x)\ell u^{\frac{n+2}{n-2}}w &= \frac{n-2}{2n}\fint (\mu\cdot x)\ell\,{\rm div}(u^{\frac{2n}{n-2}}\nabla\ell) \\
    &= -\frac{n-2}{2n}\fint \Big( (\mu\cdot x)\abs{\nabla\ell}^2 + \ell\<\nabla(\mu\cdot x),\nabla\ell\>\Big)u^{\frac{2n}{n-2}} \\
    &= -\frac{n-2}{2n}\fint \Big( (\mu\cdot x)(1-\ell^2) + \ell(\mu\cdot e - (\mu\cdot x)\ell)\Big)u^{\frac{2n}{n-2}} \\
    &= \frac{n-2}{n} \fint (\mu\cdot x)\ell^2\,u^{\frac{2n}{n-2}}.
}
The claim follows.
\end{proof}

\begin{proposition}
We have
\eq{\label{eq33}
    \frac{1}{\omega_n}Q(v,v) &= - \frac{4\kappa}{n-2}\lambda q^2 + (q^2-\mathfrak{a}_n)\fint u^2 + \big(\mathfrak{a}_n(3+4q)-q^2\big)\fint \ell^2u^2 \\
    &\quad -\Big(\frac{n-2}{n}+2q+\frac{4q^2}{n-2}\Big) \fint (\mu\cdot x)\ell^2\,u^{\frac{2n}{n-2}}.
}
\end{proposition}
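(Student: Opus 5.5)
We compute $Q(v,v)=\int vLv$ by bilinear expansion with $v=w+q\ell u$:
\[
\int vLv=\int wLw+2q\int \ell u\,Lw+q^2\int \ell u\,L(\ell u),
\]
using that $L$ is self-adjoint, so $\int wL(\ell u)=\int \ell u\,Lw$. Each piece will be evaluated with \eqref{eq27.0} and \eqref{eq27}, then reduced with \eqref{eq28}, \eqref{eq29}, the identities \eqref{eq-3} (namely $\fint \ell^2u^{\frac{2n}{n-2}}=\lambda$ and $\fint \ell x u^{\frac{2n}{n-2}}=\lambda e$), and the constraint $\fint \ell u^{\frac{2n}{n-2}}=0$. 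All integrals are taken as averages, which accounts for the factor $\frac1{\omega_n}$.

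\textbf{Step 1 ($w$-$w$ term).} By \eqref{eq27},
\[
\fint wLw=2\mathfrak a_n\fint \ell uw+(\mu\cdot e)\fint u^{\frac{n+2}{n-2}}w-\fint(\mu\cdot x)\ell u^{\frac{n+2}{n-2}}w .
\]
The middle integral vanishes by \eqref{eq:v1}. The first is given by \eqref{eq28} and the last by \eqref{eq29}, so
\[
\fint wLw=\mathfrak a_n\Big(-\fint u^2+3\fint\ell^2u^2\Big)-\frac{n-2}{n}\fint(\mu\cdot x)\ell^2u^{\frac{2n}{n-2}}.
\]

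\textbf{Step 2 (cross term).} Again by \eqref{eq27},
\[
\fint \ell u\,Lw=2\mathfrak a_n\fint\ell^2u^2+(\mu\cdot e)\fint \ell u^{\frac{2n}{n-2}}-\fint(\mu\cdot x)\ell^2u^{\frac{2n}{n-2}} .
\]
The middle integral is zero. Multiplying by $2q$ produces $4q\mathfrak a_n\fint\ell^2u^2-2q\fint(\mu\cdot x)\ell^2u^{\frac{2n}{n-2}}$.

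\textbf{Step 3 ($\ell u$-$\ell u$ term).} By \eqref{eq27.0},
\[
\fint\ell u\,L(\ell u)=2\fint\ell^2u^2-2\fint\ell uw-\frac{4}{n-2}\fint(\kappa+\mu\cdot x)\ell^2u^{\frac{2n}{n-2}} .
\]
Using \eqref{eq28} and $\fint\ell^2u^{\frac{2n}{n-2}}=\lambda$, this equals
\[
\fint u^2-\fint\ell^2u^2-\frac{4\kappa}{n-2}\lambda-\frac4{n-2}\fint(\mu\cdot x)\ell^2u^{\frac{2n}{n-2}} .
\]
Multiplying by $q^2$ gives the remaining contributions.

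\textbf{Step 4 (collecting).} Adding the three steps gives:
\begin{itemize}
\item coefficient of $\fint u^2$: $q^2-\mathfrak a_n$;
\item coefficient of $\fint\ell^2u^2$: $3\mathfrak a_n+4q\mathfrak a_n-q^2$;
\item constant term: $-\frac{4\kappa}{n-2}\lambda q^2$;
\item coefficient of $\fint(\mu\cdot x)\ell^2u^{\frac{2n}{n-2}}$: $-\big(\frac{n-2}n+2q+\frac{4q^2}{n-2}\big)$.
\end{itemize}
This is exactly \eqref{eq33}.

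The main obstacle is bookkeeping rather than ideas. The points that need care are the symmetry of $L$ in the cross term, which is what justifies the factor $2q$, and the two vanishing terms involving $\mu\cdot e$. Both rely on $\fint u^{\frac{n+2}{n-2}}w=0$ and on the barycenter condition. With these in place, the identity follows.
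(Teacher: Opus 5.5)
Your proposal is correct and is essentially the paper's own proof. The paper also expands $Q(v,v)=Q(w,w)+2q\,Q(w,\ell u)+q^2Q(\ell u,\ell u)$ and evaluates the three pieces with \eqref{eq27}, \eqref{eq27.0}, \eqref{eq28}, \eqref{eq29}, \eqref{eq:v1} and \eqref{eq-3}; the two $\mu\cdot e$ terms drop out exactly as you describe, and your coefficients agree with the paper's.
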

\begin{proof}
First, using \eqref{eq:v1}, \eqref{eq27}, \eqref{eq28}, and \eqref{eq29} we have
\eq{\label{eq30}
    \frac{1}{\omega_n}Q(w,w) &= \fint wLw = \fint w\Big(2\mathfrak{a}_n\ell u + \big( \mu\cdot e - (\mu\cdot x)\ell \big)u^{\frac{n+2}{n-2}}\Big) \\
    &= 2\mathfrak{a}_n\fint \ell uw + (\mu\cdot e)\fint w\,u^{\frac{n+2}{n-2}} - \fint (\mu\cdot x)\ell\,u^{\frac{n+2}{n-2}}w \\
    &= -\mathfrak{a}_n \fint u^2 + 3\mathfrak{a}_n\fint \ell^2u^2 - \frac{n-2}{n} \fint (\mu\cdot x)\ell^2\,u^{\frac{2n}{n-2}}.
}
Second, using \eqref{eq27} we have
\eq{\label{eq31}
    \frac{1}{\omega_n}Q(w,\ell u) &= \fint \ell uLw = \fint \ell u\Big( 2\mathfrak{a}_n\ell u + \big( \mu\cdot e - (\mu\cdot x)\ell \big)u^{\frac{n+2}{n-2}} \Big) \\
    &= 2\mathfrak{a}_n\fint \ell^2u^2 - \fint (\mu\cdot x)\ell^2\,u^{\frac{2n}{n-2}}.
}
Third, using \eqref{eq-3}, \eqref{eq27.0}, \eqref{eq28}, and \eqref{eq29} we have
\eq{\label{eq32}
    \frac{1}{\omega_n}Q(\ell u,\ell u) &= \fint \ell uL(\ell u) = \fint \ell u\Big( 2\ell u-2w-\frac{4}{n-2}(\kappa+\mu\cdot x)\ell\,u^{\frac{n+2}{n-2}} \Big) \\
    &= 2\fint \ell^2u^2 - 2\fint \ell uw - \frac{4\kappa}{n-2}\fint \ell^2\,u^{\frac{2n}{n-2}} - \frac{4}{n-2}\fint (\mu\cdot x)\ell^2\,u^{\frac{2n}{n-2}} \\
    &= \fint u^2 - \fint \ell^2u^2 - \Big(n+\frac{4\mathfrak{a}_n}{n-2}\Big)\lambda - \frac{4}{n-2}\fint (\mu\cdot x)\ell^2\,u^{\frac{2n}{n-2}}.
}
The claim now follows from \eqref{eq30}, \eqref{eq31}, and \eqref{eq32}.
\end{proof}

For simplicity, we denote
\eq{
    I_1 \coloneqq \fint u^2, \qquad I_2 \coloneqq \fint \ell^2u^2, \qquad I_3 \coloneqq \fint (\mu\cdot x)\ell^2\,u^{\frac{2n}{n-2}}, \qquad I_4 \coloneqq \fint \ell^2\abs{\nabla u}^2,
}
and
\eq{
    C_1 \coloneqq n+1+\kappa, \qquad C_2 \coloneqq \mathfrak{a}_n(3+4q)-q^2, \qquad C_3 \coloneqq \frac{n-2}{n} + 2q + \frac{4q^2}{n-2}.
}
Then \eqref{eq33} becomes
\eq{\label{eq33.1}
    \frac{1}{\omega_n}Q(v,v) = - \frac{4\kappa}{n-2}\lambda q^2 + (q^2-\mathfrak{a}_n)I_1 + C_2I_2 - C_3I_3.
}

\subsection{Estimates of \texorpdfstring{$Q(v,v)$}{Q(v,v)}}\label{sec7.4}

Testing the Euler--Lagrange equation \eqref{eq11} against $\ell^2u$ gives
\eq{\label{eq34}
    \fint\<\nabla u,\nabla(\ell^2u)\> + \kappa I_2 = \kappa\lambda + I_3.
}
Using \eqref{eq28} we have
\eq{\label{eq35}
    \fint\<\nabla u,\nabla(\ell^2u)\> &= \fint \ell^2\abs{\nabla u}^2 + 2\fint \ell u\<\nabla u,\nabla\ell\> \\
    &= \fint \ell^2\abs{\nabla u}^2 + 2\fint \ell u \Big( w + \frac{n-2}{2}\ell u\Big) \\
    &= I_4 - I_1 +(n+1)I_2.
}
Combining \eqref{eq34} and \eqref{eq35} gives
\eq{
    I_2 = \frac{1}{C_1}(\kappa\lambda+I_1+I_3-I_4).
}
Hence \eqref{eq33.1} becomes
\eq{
    \frac{1}{\omega_n}Q(v,v) = \lambda\Big(\frac{\kappa C_2}{C_1} -\frac{4\kappa}{n-2}q^2\Big) + \Big(q^2-\mathfrak{a}_n+\frac{C_2}{C_1}\Big)I_1 - \Big(C_3-\frac{C_2}{C_1} \Big)I_3 - \frac{C_2}{C_1}I_4.
}

\begin{lemma}
We have
\eq{\label{eq37}
    -\Big(C_3-\frac{C_2}{C_1}\Big)I_3 \leq \Big(C_3-\frac{C_2}{C_1}\Big)C_4(1-I_1),
}
where
\eq{
    C_4 \coloneqq \frac{ \kappa\lambda\sqrt{1-\lambda} }{ \big( \frac{n(n-2)}{8\mathfrak{a}_n} + \frac{n-2}{2n} \big)(1-\lambda) + \lambda }.
}
\end{lemma}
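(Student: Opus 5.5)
The plan is to combine the bound \eqref{eq23.5} from the previous lemma with a sign check on the coefficient $C_3-\frac{C_2}{C_1}$. Write
\[
D\coloneqq\Big(\frac{n(n-2)}{8\mathfrak{a}_n}+\frac{n-2}{2n}\Big)(1-\lambda)+\lambda .
\]
Then $C_4=\kappa\lambda\sqrt{1-\lambda}/D$, and \eqref{eq23.5} reads $D\,\abs{I_3}\le \lambda\sqrt{1-\lambda}\,\kappa(1-I_1)$.

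\textbf{Step 1: positivity of $D$.} Since $\lambda<\frac{3}{10}<1$ by Lemma~\ref{lem2.1}, and $\mathfrak{a}_n>0$, every term of $D$ is positive, so $D>0$. Dividing \eqref{eq23.5} by $D$ gives
\[
\abs{I_3}\le C_4(1-I_1).
\]
We also note that $1-I_1\ge0$. This follows from H\"older's inequality together with the normalization $\fint u^{\frac{2n}{n-2}}=1$ in \eqref{eq0}, which give $\fint u^2\le 1$. It is also consistent with \eqref{eq3}, since $\fint\abs{\nabla u}^2=\kappa(1-I_1)\ge 0$.

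\textbf{Step 2: the sign of $C_3-\frac{C_2}{C_1}$.} I expect this to be the only point requiring care. By Lemma~\ref{lem2.1} we have $q>\frac{7}{10}>0$, so
\[
C_3=\frac{n-2}{n}+2q+\frac{4q^2}{n-2}>2q>0 .
\]
Also $C_1=n+1+\kappa\ge 6$ for $n\ge5$. If $C_2\le0$, then $C_3-\frac{C_2}{C_1}>0$ trivially.

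If instead $C_2>0$, we use the contradiction hypothesis \eqref{eq-1}. It gives $\mathfrak{a}_n\le\frac{n(n-2)}{4(n^2-3n+1)}$, and this quantity is at most $\frac{15}{44}<\frac12$ for $n\ge5$. Hence
\[
\frac{C_2}{C_1}\le\frac{\mathfrak{a}_n(3+4q)}{6}<\frac{3+4q}{12}=\frac14+\frac q3 .
\]
For $q>\frac{7}{10}$ the right-hand side is less than $2q$, because $\frac14<\frac{5q}{3}$. Therefore $\frac{C_2}{C_1}<2q<C_3$. In either case $C_3-\frac{C_2}{C_1}>0$.

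\textbf{Step 3: conclusion.} Since the coefficient is positive,
\[
-\Big(C_3-\frac{C_2}{C_1}\Big)I_3\le\Big(C_3-\frac{C_2}{C_1}\Big)\abs{I_3}\le\Big(C_3-\frac{C_2}{C_1}\Big)C_4(1-I_1),
\]
which is exactly \eqref{eq37}.
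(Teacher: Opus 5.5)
Your proof is correct and follows the paper's argument: you show $C_3-\frac{C_2}{C_1}>0$ (trivially when $C_2\le 0$), then multiply the previous lemma's bound $\abs{I_3}\le C_4(1-I_1)$ by this positive coefficient. The only difference is the case $C_2>0$: you use the crude chain $\frac{C_2}{C_1}<\frac14+\frac q3<2q<C_3$, while the paper uses $\frac{C_1}{n-2}\ge\frac{13}{4}>\frac{C_2}{q^2}$ to get the sharper $\frac{C_2}{C_1}<\frac{q^2}{n-2}<C_3$, which it reuses later in the estimate of the second term when $C_2\ge 0$. Your version suffices for this lemma but would not supply that later inequality.
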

\begin{proof}
We first claim
\eq{\label{eq36}
    C_3-\frac{C_2}{C_1} > 0.
}
If $C_2\leq0$, the claim is trivial. If $C_2>0$, then using \eqref{eq-2.5} we have
\eq{
    \frac{C_1}{n-2} = \frac{n+1+\kappa}{n-2} \geq \frac{n+1}{n-2} + \frac{n}{4} \geq \frac{13}{4} > \frac{\mathfrak{a}_n(3+4q)-q^2}{q^2} = \frac{C_2}{q^2},
}
hence
\eq{\label{eq36.5}
    \frac{C_2}{C_1} < \frac{q^2}{n-2} < C_3.
}
The conclusion then follows from \eqref{eq23.5} and \eqref{eq36}.
\end{proof}

\begin{lemma}
Set 
$C_2^-\coloneqq \max\{-C_2,0\}$. Then
\eq{\label{eq38}
    -\frac{C_2}{C_1}I_4 \leq \frac{\kappa C_2^-}{C_1}(1-I_1).
}
\end{lemma}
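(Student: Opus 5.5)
The inequality \eqref{eq38} is trivial when $C_2\ge0$. Indeed, then $C_2^-=0$, $C_1=n+1+\kappa>0$ and $I_4=\fint\ell^2\abs{\nabla u}^2\ge0$. The left-hand side $-\frac{C_2}{C_1}I_4$ is therefore nonpositive and the right-hand side vanishes.

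So I will assume $C_2<0$, in which case $C_2^-=-C_2>0$. The claim then reads
\eq{
\frac{C_2^-}{C_1}I_4\le \frac{\kappa C_2^-}{C_1}(1-I_1).
}
Dividing by the positive factor $C_2^-/C_1$, it reduces to $I_4\le\kappa(1-I_1)$. This follows from the pointwise bound $\ell^2\le1$, which gives $I_4\le\fint\abs{\nabla u}^2$. Then \eqref{eq3}, written in averaged form with the normalization \eqref{eq0}, gives $\fint\abs{\nabla u}^2=\kappa\big(1-\fint u^2\big)=\kappa(1-I_1)$.

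There is no real obstacle in this argument. The only points to check are the positivity of $C_1$, which holds since $\kappa>0$, and that $1-I_1\ge0$. The latter holds because $\fint u^2\le(\fint u^{2n/(n-2)})^{(n-2)/n}=1$ by H\"older's inequality; it is also implied by \eqref{eq3} itself, since $\kappa>0$. Both are already available from the earlier setup.
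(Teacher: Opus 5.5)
Your proof is correct and follows the paper's argument: the key bound is $I_4\le\fint\abs{\nabla u}^2=\kappa(1-I_1)$, obtained from $\abs{\ell}\le1$ and \eqref{eq3}. Your explicit case split on the sign of $C_2$, using $C_1>0$ and $I_4\ge0$, just spells out what the paper leaves as ``the conclusion follows.''
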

\begin{proof}
Since $\abs{\ell}\leq1$, using \eqref{eq3} we have
\eq{
    I_4 \leq \fint \abs{\nabla u}^2 = \kappa(1-I_1).
}
The conclusion follows.
\end{proof}

Combining \eqref{eq33.1}, \eqref{eq37}, and \eqref{eq38} gives
\eq{\label{eq:rewrite}
    \frac{1}{\omega_n}Q(v,v) &\leq \Big\{ q^2-\mathfrak{a}_n-\frac{4\kappa}{n-2}\lambda q^2 + \frac{C_2}{C_1}(1+\kappa\lambda) \Big\}I_1 \\
    &\quad + \Big\{ -\frac{4\kappa}{n-2}\lambda q^2 + \frac{\kappa}{C_1}(\lambda C_2+C_2^-)+\Big(C_3-\frac{C_2}{C_1}\Big)C_4 \Big\}(1-I_1).
}
We now estimate the two terms separately.

\bigskip

\noindent\textbf{Estimate of the first term.} By definition of $q$, \eqref{eq_q}, we have 
\eq{
    \lambda = \frac{n-2}{2nq+n-2}.
}
It follows
\eq{\label{eq38.5}
    &q^2-\mathfrak{a}_n-\frac{4\kappa}{n-2}\lambda q^2 + \frac{C_2}{C_1}(1+\kappa\lambda) \\
    &= \frac{ (n-2-2q)\{ 2\mathfrak{a}_n(q+1)((n-2)^2+4\mathfrak{a}_n-2nq) -n^2(n+2)q^2 \} }{ (2nq+n-2)(4\mathfrak{a}_n+n^2+2n+4) }.
}
First, since $\lambda\geq\frac{1}{n+1}$, we have
\eq{
    q = \frac{n-2}{2n}\Big(\frac{1}{\lambda}-1\Big) \leq \frac{n-2}{2},
}
hence
\eq{\label{eq39}
    n-2-2q \geq 0.
}
Second, since
\eq{
    \mathfrak{a}_n \le  \frac{n(n-2)}{4(n^2-3n+1)} < \frac{1}{2} < \frac{(n-2)^2}{4},
}
we have
\eq{\label{eq40}
    2\mathfrak{a}_n(q+1)((n-2)^2+4\mathfrak{a}_n-2nq) < \frac{n}{2}\cdot 2(n-2)^2 = n(n-2)^2.
}
Third, since $\lambda<\frac{3}{10}$, we have
\eq{
    q = \frac{n-2}{2n}\Big(\frac{1}{\lambda}-1\Big) > \frac{7(n-2)}{6n} > \frac{n-2}{n},
}
hence
\eq{\label{eq41}
    n^2(n+2)q^2 > n^2(n+2)\cdot\frac{(n-2)^2}{n^2} = (n+2)(n-2)^2 > n(n-2)^2.
}
Combining \eqref{eq38.5}, \eqref{eq39}, \eqref{eq40}, and \eqref{eq41} gives
\eq{
    q^2-\mathfrak{a}_n-\frac{4\kappa}{n-2}\lambda q^2 + \frac{C_2}{C_1}(1+\kappa\lambda) \leq 0.
}

\bigskip

\noindent\textbf{Estimate of the second term when $C_2<0$.} 
In this case, we have
\eq{
    C_2^-=-C_2 = -\mathfrak{a}_n(3+4q)+q^2 \leq q^2 \leq \frac{(n-2)^2}{4}.
}
First, since $\lambda\geq\frac{1}{n+1}$, we have $\frac{1-\lambda}{\lambda}\leq n$, hence
\eq{
    \frac{\kappa}{C_1}(\lambda C_2+C_2^-) \leq \frac{-C_2}{C_1}\kappa(1-\lambda) \leq \frac{\kappa q^2}{C_1}(1-\lambda) \leq \frac{n}{C_1}\kappa\lambda q^2 = \Big( 1 - \frac{n+1+\mathfrak{a}_n}{C_1}\Big)\frac{4\kappa}{n-2}\lambda q^2.
}
Note that
\eq{
    5n(n+1+\mathfrak{a}_n) - 12C_1 = 2n^2-n-12+(5n-12)\mathfrak{a}_n > 0,
}
hence
\eq{\label{eq42}
    \frac{\kappa}{C_1}(\lambda C_2+C_2^-) < \Big(1-\frac{12}{5n}\Big)\frac{4\kappa}{n-2}\lambda q^2.
}
Second, since $q\leq\frac{n-2}{2}$, we have
\eq{
    C_3 = \frac{n-2}{n} + 2q + \frac{4q^2}{n-2} < 1 + 2q + 2q = 1 + 4q,
}
and
\eq{\label{eq43}
    -\frac{C_2}{C_1} = \frac{q^2-\mathfrak{a}_n(3+4q)}{n+1+\frac{n(n-2)}{4}+\mathfrak{a}_n} < \frac{q^2}{\big(\frac{n-2}{2}\big)^2} \leq 1.
}
Moreover,
\eq{\label{eq44}
    C_4 = \frac{ \kappa\lambda\sqrt{1-\lambda} }{ \big( \frac{n(n-2)}{8\mathfrak{a}_n} + \frac{n-2}{2n} \big)(1-\lambda) + \lambda } < \frac{ \kappa\lambda\sqrt{1-\lambda} }{ \frac{n(n-2)}{8\mathfrak{a}_n}(1-\lambda) } < \frac{8\mathfrak{a}_n\kappa\lambda}{n(n-2)\sqrt{1-3/10}} < \frac{48\mathfrak{a}_n\kappa\lambda}{5n(n-2)}.
}
Combining \eqref{eq42}, \eqref{eq43}, and \eqref{eq44} gives
\eq{\label{eq45}
    \Big(C_3 - \frac{C_2}{C_1}\Big)C_4 < \frac{48\mathfrak{a}_n(3+4q)\kappa\lambda}{5n(n-2)} < \frac{48q^2\kappa\lambda}{5n(n-2)} = \frac{12}{5n}\cdot\frac{4\kappa}{n-2}\lambda q^2,
}
where we used $C_2=\mathfrak{a}_n(3+4q)-q^2<0$. By \eqref{eq42} and \eqref{eq45} we have
\eq{
    -\frac{4\kappa}{n-2}\lambda q^2 + \frac{\kappa}{C_1}(\lambda C_2+C_2^-)+\Big(C_3-\frac{C_2}{C_1}\Big)C_4 < 0.
}

\bigskip

\noindent\textbf{Estimate of the second term when $C_2\geq0$.}
In this case, we have $C_2^-=0$. First, since
\eq{
    \frac{n(n-2)}{8\mathfrak{a}_n} + \frac{n-2}{2n} \geq \frac{n(n-2)}{8}\cdot\frac{4(n^2-3n+1)}{n(n-2)} + \frac{n-2}{2n} > 1,
}
we have
\eq{
    C_5 \coloneqq \Big( \frac{n(n-2)}{8\mathfrak{a}_n} + \frac{n-2}{2n} \Big)(1-\lambda) + \lambda > 1 > \sqrt{1-\lambda}.
}
Recall \eqref{eq36.5} that
\eq{
    \frac{C_2}{C_1} < \frac{q^2}{n-2}.
}
Hence
\eq{\label{eq46}
    &\frac{\kappa}{C_1}(\lambda C_2+C_2^-)+\Big(C_3-\frac{C_2}{C_1}\Big)C_4
    = \frac{C_2}{C_1}\kappa\lambda + \Big(C_3-\frac{C_2}{C_1}\Big)\frac{\kappa\lambda\sqrt{1-\lambda}}{C_5} \\
    &= \frac{C_3}{C_5}\kappa\lambda\sqrt{1-\lambda} + \frac{C_2}{C_1}\kappa\lambda\Big(1-\frac{\sqrt{1-\lambda}}{C_5}\Big) < \frac{C_3}{C_5}\kappa\lambda\sqrt{1-\lambda} + \frac{q^2}{n-2}\kappa\lambda\Big(1-\frac{\sqrt{1-\lambda}}{C_5}\Big) \\
    &= \frac{\kappa\lambda q^2}{n-2} + \frac{\sqrt{1-\lambda}}{C_5}\Big(\frac{n-2}{q^2}C_3-1\Big)\frac{\kappa\lambda q^2}{n-2}.
}
Note that
\eq{
    \frac{n-2}{q^2}C_3-1 &= \frac{n-2}{q^2}\Big(\frac{n-2}{n} + 2q + \frac{4q^2}{n-2}\Big)-1 = \frac{(n-2)^2}{nq^2} + \frac{2(n-2)}{q} + 3 \\
    &= \frac{4n\lambda^2}{(1-\lambda)^2} + \frac{4n\lambda}{1-\lambda} + 3 = \frac{3+(4n-6)\lambda+3\lambda^2}{(1-\lambda)^2},
}
and
\eq{
    C_5 &= \Big( \frac{n(n-2)}{8\mathfrak{a}_n} + \frac{n-2}{2n} \Big)(1-\lambda) + \lambda \geq \Big( \frac{n^2-3n+1}{2} + \frac{n-2}{2n} \Big)(1-\lambda) + \lambda.
}
Hence
\eq{\label{eq47}
    \frac{\sqrt{1-\lambda}}{C_5}\Big(\frac{n-2}{q^2}C_3-1\Big) \leq \frac{3+(4n-6)\lambda+3\lambda^2}{(1-\lambda)^{3/2}\big\{\big( \frac{n^2}{2}-\frac{3n}{2}+1-\frac{1}{n}\big)(1-\lambda) + \lambda\big\}} \eqcolon f_n(\lambda).
}
Since
\eq{
    \frac{n^2}{2}-\frac{3n}{2}+1-\frac{1}{n} > 1,
}
we see that $f_n(\lambda)$ is increasing in $\lambda$. Moreover, one checks that $f_n(\lambda)$ is decreasing in $n$. Hence
\eq{\label{eq48}
    f_n(\lambda) \leq f_5\Big(\frac{3}{10}\Big) < 3.
}
Combining \eqref{eq46}, \eqref{eq47}, and \eqref{eq48} gives
\eq{
    -\frac{4\kappa}{n-2}\lambda q^2 + \frac{\kappa}{C_1}(\lambda C_2+C_2^-)+\Big(C_3-\frac{C_2}{C_1}\Big)C_4 < 0.
}
Hence we complete the proof.

\section{Blow-up analysis}\label{sec9}

In this section we prove that the infimum \eqref{eq:a_n0} is attainable. For simplicity, we set
\eq{
    p \coloneqq \frac{2n}{n-2}.
}
By homogeneity and Kato's inequality, we may assume $u\geq0$ and normalize
\eq{\label{eq:1000}
    \fint_{\S^n}u^p\,\rdV=1,\qquad\fint_{\S^n}xu^p\,\rdV=0.
}
Then \eqref{eq:a_n0} becomes
\eq{\label{eq:critical-quotient}
    \mathfrak{a}_n=\inf\Big\{\frac{\fint_{\S^n}|\nabla u|^2\,\rdV}{1-\fint_{\S^n}u^2\,\rdV}-\frac{n(n-2)}4 \,\Big|\, \fint u^p=1,\ \fint xu^p=0,\ \fint u^2<1
    \Big\}.
}

\begin{theorem}\label{thm:attainment}
Let $n\ge5$. If
\eq{
      \mathfrak{a}_n\leq \frac{n(n-2)}{4(n^2-3n+1)} \eqcolon c_n,
}
then the infimum in \eqref{eq:critical-quotient} is attained by some non-constant $0<u\in C^\infty(\S^n)$ satisfying \eqref{eq:1000}.
\end{theorem}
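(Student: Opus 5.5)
The proof will be a concentration-compactness analysis of a minimizing sequence, with the assumed bound $\mathfrak{a}_n\le c_n$ used to exclude every loss of compactness. Take a minimizing sequence $u_k\ge0$ with $\fint u_k^p=1$, $\fint xu_k^p=0$, $\fint u_k^2<1$, and
\[
\fint|\nabla u_k|^2=(\kappa+o(1))\Big(1-\fint u_k^2\Big),\qquad \kappa=\tfrac{n(n-2)}4+\mathfrak{a}_n .
\]
Since $\fint u_k^2\le1$, the sequence is bounded in $W^{1,2}$. Pass to a subsequence with $u_k\rightharpoonup u$ weakly in $W^{1,2}$ and strongly in $L^2$. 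The proof has three steps: exclude the degenerate case $\fint u_k^2\to1$, exclude concentration of the $L^p$ mass, and then conclude by regularity.

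\textbf{Step 1: the degenerate case.} Suppose $\fint u_k^2\to1$. Then $\fint|\nabla u_k|^2\to0$, so $u_k\to1$ in $W^{1,2}$. Write $u_k=c_k(1+\varepsilon_k\psi_k)$ with $\psi_k\perp1$ and $\|\psi_k\|_{W^{1,2}}$ normalized. Expand the quotient to second order. The barycenter constraint forces the first spherical harmonic component of $\psi_k$ to be $o(1)$. The quotient then tends to at least
\[
\frac{\lambda_2-\tfrac{n(n-2)}4\,(p-1)\cdots}{\cdots}.
\]
More concretely, on second-order harmonics the ratio of $\int|\nabla\psi|^2$ to the deficit $(p-2)\int\psi^2$ gives the limit $\frac{2(n+1)}{p-2}-\frac{n(n-2)}4=\frac{n(n-2)}{4}\cdot\frac{n+2}{n}\cdots$. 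Computing this limit yields a constant strictly larger than $c_n$, which contradicts $\mathfrak{a}_n\le c_n$. This is a routine spectral computation using $\lambda_j=j(j+n-1)$.

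\textbf{Step 2: concentration.} By Struwe/Lions profile decomposition, $u_k=u+\sum_i W_k^{(i)}+o(1)$ in $W^{1,2}$, where the $W_k^{(i)}$ are bubbles concentrating at points $y_i$. Each bubble carries $L^p$ mass $m_i$, with $\sum m_i+\fint u^p=1$. Bubbles contribute $\frac{n(n-2)}4\omega_n^{2/n}m_i^{(n-2)/n}$ to $\int|\nabla\cdot|^2$ but nothing to $L^2$. The barycenter constraint gives $\sum m_iy_i+\fint xu^p=0$. Using the strict concavity of $t\mapsto t^{(n-2)/n}$, the limiting quotient is at least
\[
\frac{\tfrac{n(n-2)}4\Big(\sum m_i^{(n-2)/n}+\|u\|_p^2-1\Big)+\text{(Sobolev deficit of }u)}{1-\fint u^2}.
\]

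There are two subcases.
\begin{itemize}
\item If $u\equiv0$, there are at least two bubbles, because a single bubble would put the barycenter on the sphere. The quotient is then at least $\frac{n(n-2)}4(2^{2/n}-1)=c_n^*$.
\item If $u\not\equiv0$ and there is at least one bubble, the quotient is bounded below by an expression that I would minimize over $m\in(0,1)$. I expect this to give a value at least as large as in the pure two-bubble case, or at least larger than $c_n$.
\end{itemize}
It then remains to check the elementary inequality $c_n^*>c_n$ for $n\ge5$, namely
\[
2^{2/n}-1>\frac1{n^2-3n+1}.
\]
This holds because $2^{2/n}-1>\frac{2\ln2}{n}$, and $\frac{2\ln2}{n}>\frac1{n^2-3n+1}$ for $n\ge5$; this fails for $n=3$, consistent with the paper's remark. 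Hence equality $\mathfrak{a}_n\le c_n$ forces no bubbles, $u_k\to u$ strongly in $L^p$, and the constraints pass to the limit. Step 1 ensures $\fint u^2<1$, and weak lower semicontinuity shows that $u$ is a minimizer.

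\textbf{Step 3: regularity.} The minimizer is nonconstant, since constants have zero denominator. It satisfies the Euler--Lagrange equation \eqref{eq:important}. Standard Brezis--Kato/Trudinger bootstrapping gives smoothness, and the strong maximum principle gives $u>0$.

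\textbf{Main obstacle.} The hardest part is Step 2 in the mixed case: bounding the quotient below when a weak limit $u\neq0$ coexists with bubbles. The difficulty is that the $L^2$ denominator $1-\fint u^2$ is controlled only by $u$. To handle it, I would apply Aubin's inequality (Theorem \ref{cor:aubin-barycenter}, with its implicit constant) or plain Sobolev to $u$. This should reduce the estimate to a one-variable inequality in the bubble mass $m$, showing the limit is at least $\min\{c_n^*,\text{value at }m\}>c_n$.
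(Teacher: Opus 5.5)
\textbf{Overall.} Your Step~1 is fine and is essentially the paper's Lemma~\ref{lem:constant}: the degenerate threshold $\frac{2(n+1)}{p-2}-\frac{n(n-2)}{4}=\frac{(n-2)(n+2)}{4}$ exceeds $c_n$. The pure-bubble subcase is also fine: the barycenter forces $m_i\le\frac12$, hence $\sum m_i^{(n-2)/n}\ge 2^{2/n}$.

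\textbf{The gap: the mixed case.} The genuine gap is the mixed case, which you leave at ``I expect'', and the route you sketch for it cannot work.
\begin{enumerate}
\item Plain Sobolev on the weak limit $u$ only yields a lower bound $\frac{n(n-2)}{4}\big(m^{\frac{n-2}{n}}+(1-m)^{\frac{n-2}{n}}-1\big)/(1-\fint u^2)$. This tends to $0$ as the bubble mass $m\to0$, so it falls below $c_n$.
\item Aubin's inequality (Theorem~\ref{cor:aubin-barycenter}) does not apply to $u$. Its barycenter is $\fint xu^p=-\sum_i m_iy_i$, in general nonzero. Moreover the constant there is not explicit; the sharp constant is $\mathfrak{a}_n$ itself.
\item More fundamentally, for small $m$ the $u$-part of the limiting quotient carries only $\mathfrak{a}_n$-type information. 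So the honest target is a strict subadditivity statement, ``limit $>\mathfrak{a}_n$'': rebalancing $u$ costs $O(m)$, while the bubble gains $\sim m^{(n-2)/n}$. An absolute bound $>c_n$ from a one-variable inequality in $m$ is not the right target, and your proposal proves neither.
\item Invoking Struwe's decomposition into genuine bubbles presupposes a Palais--Smale structure that an arbitrary minimizing sequence lacks. Without it, concentrated masses are not quantized, and that is exactly why small bubbles cannot be ruled out.
\end{enumerate}

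\textbf{The missing idea: quantize concentration through the Euler--Lagrange equation.} The paper minimizes the subcritical problems $\mathfrak{a}_{n,p_j}$, so that $u_j$ solves $-\Delta u_j+\kappa_ju_j=(\kappa_j+\mu_j\cdot x)u_j^{p_j-1}$. (An Ekeland-type Palais--Smale minimizing sequence is a conceivable alternative, with the same work still needed on the multipliers.)
\begin{enumerate}
\item The hypothesis $\mathfrak{a}_n\le c_n$ enters through the moment bound of Lemma~\ref{lem2.1}, $\limsup_j\lambda_{\max}(M_j)\le\frac{3}{10}$. This comes from splitting $f_\pm=u\sqrt{(1\pm\ell)/2}$ and applying Sobolev to each piece.
\item Combined with the Kazdan--Warner identity \eqref{eq:KWj}, this bounds the multiplier: $\abs{\mu}\le\frac{20n}{7(n-2)}a_\infty$. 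Hence $\frac{4}{n(n-2)}(\kappa+\abs{\mu})<\big(\frac{10}{3}\big)^{2/n}$.
\item The equation then forces every atom to carry mass $\nu_i\ge\big(\frac{n(n-2)}{4(\kappa+\abs{\mu})}\big)^{n/2}>\frac{3}{10}$.
\item On the other hand, $\nu_i\le x_i^{T}M_\infty x_i\le\frac{3}{10}$.
\end{enumerate}
This excludes all atoms at once: your mixed case, the two-bubble case, and any number of bubbles, with no comparison of configurations needed.

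The moment bound by itself does hold along any minimizing sequence, since Lemma~\ref{lem2.1} only uses the quotient value and the constraints. But it only caps atoms at $\frac{3}{10}$ and cannot exclude small ones. The lower bound on atom mass coming from the equation is the indispensable ingredient your proposal lacks.
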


The proof is built on the standard concentration--compactness argument. We sketch the proof for the reader's convenience.

\subsection{Subcritical minimizers}

For $2<s<p$, we set
\eq{\label{eq:subcritical-def}
    \mathfrak{a}_{n,s} \coloneqq \inf\Big\{\frac{\fint_{\S^n}|\nabla v|^2\,\rdV}{1-\fint_{\S^n}v^2\,\rdV}-\frac n{s-2} \,\Big|\, \fint v^s=1,\ \fint xv^s=0,\ \fint v^2<1 \Big\}.
}
In particular, $\mathfrak{a}_{n,p}=\mathfrak{a}_n$.

Recall Beckner's  sharp interpolation inequality  on the sphere (see \cite{Beckner})
\eq{\label{eq:beckner}
    \left(\fint_{\S^n}\abs{v}^s\,\rdV\right)^{2/s}\leq \fint_{\S^n}v^2\,\rdV +\frac{s-2}{n}\fint_{\S^n}\abs{\nabla v}^2\,\rdV,\quad 2<s\le p.
}
As a simple consequence, $\mathfrak{a}_{n,s}\geq0$.

Let $p_j\nearrow p$. The idea is to apply the compact embedding to obtain a minimizer $0<u_j\in C^\infty(\S^n)$ that attains the infimum $\mathfrak{a}_{n,p_j}$.

\begin{proposition}\label{prop8.1}
For all sufficiently large $j$, the infimum $\mathfrak{a}_{n,p_j}$ is attained by some
$u_j\in C^{\infty}(\S^n)$ with $u_j>0$ and
\eq{\label{eq:subconstraints}
    \fint_{\S^n} u_j^{p_j}\,\rdV=1,\qquad \fint_{\S^n} x u_j^{p_j}\,\rdV=0.
}
\end{proposition}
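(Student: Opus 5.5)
The plan is the direct method in the subcritical regime, where the embedding $W^{1,2}(\S^n)\hookrightarrow L^{p_j}(\S^n)$ is compact, followed by regularity and positivity from the Euler--Lagrange equation.

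\textbf{Step 1: the infimum is finite and not degenerate.} First I check that the admissible class in \eqref{eq:subcritical-def} is nonempty. For instance, one can take a small even perturbation $v=c(1+\varepsilon h)$ with $h$ an even second-order spherical harmonic, normalized so that $\fint v^{p_j}=1$. Evenness gives the barycenter constraint, and strict convexity gives $\fint v^2<1$. Beckner's inequality \eqref{eq:beckner} gives $\mathfrak{a}_{n,p_j}\ge 0$. Next I need $\mathfrak{a}_{n,p_j}<\infty$ uniformly in $j$: test functions as above give a bound that can be made uniform for $p_j$ near $p$.

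\textbf{Step 2: a minimizing sequence and the main obstacle.} Take a minimizing sequence $v_k\ge0$ (replacing $v$ by $|v|$ does not increase the quotient). The key issue is to prevent $1-\fint v_k^2\to0$, since the quotient has a denominator. I expect this to be the main obstacle, and I would handle it as follows.

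If $\delta_k:=1-\fint v_k^2\to0$, then the quotient bound gives $\fint|\nabla v_k|^2\lesssim \delta_k\to0$. Hence $v_k\to$ a constant in $W^{1,2}$, and the normalization forces the constant to be $1$. Write $v_k=1+\eta_k$ and expand both sides to second order. One gets
\[
\delta_k\approx (s-2)\fint\eta_k^2
\]
to leading order, where $s=p_j$, while $\fint\eta_k=O(\|\eta_k\|^2)$ and the barycenter constraint gives $\fint x\eta_k\approx0$. The quotient then becomes at least
\[
\frac{\fint|\nabla\eta_k|^2}{(s-2)\fint\eta_k^2}-\frac{n}{s-2}.
\]
Since $\eta_k$ is asymptotically orthogonal to first-order harmonics and to constants, its spectral gap is at least $2(n+1)$. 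This gives a lower bound
\[
\frac{2(n+1)-n}{s-2}=\frac{n+2}{s-2}.
\]
For $s$ close to $p$ this is roughly $\frac{(n+2)(n-2)}{4}$, far above $c_n$ and above the uniform upper bound from Step 1. So degenerate sequences are excluded for $j$ large, and $\delta_k$ stays bounded below.

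\textbf{Step 3: compactness.} With $\delta_k$ bounded below, $v_k$ is bounded in $W^{1,2}$. By compactness of the embedding into $L^{p_j}$ and $L^2$ (valid since $p_j<p$), a subsequence converges weakly in $W^{1,2}$ and strongly in $L^{p_j}$ and $L^2$. Both constraints pass to the limit, and $\fint u_j^2<1$ is preserved because $\delta_k$ is bounded below. Weak lower semicontinuity of the Dirichlet term then shows that $u_j$ is a minimizer.

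\textbf{Step 4: Euler--Lagrange equation, regularity and positivity.} The Lagrange multipliers are legitimate because the constraint gradients $u_j^{p_j-1}$ and $x u_j^{p_j-1}$ are linearly independent; otherwise $x$ would be constant on the support of $u_j$. The minimizer therefore satisfies
\[
-\Delta u_j+\Big(\mathfrak{a}_{n,p_j}+\frac{n}{p_j-2}\Big)u_j=(\alpha+\mu\cdot x)u_j^{p_j-1},
\]
with a subcritical nonlinearity. Standard bootstrap (Moser iteration, then Schauder estimates) gives $u_j\in C^\infty$. Since $u_j\ge0$ and $u_j\not\equiv0$, the strong maximum principle, applied with the bounded coefficient $(\alpha+\mu\cdot x)u_j^{p_j-2}$, gives $u_j>0$.
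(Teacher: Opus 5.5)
\paragraph{The gap: a strict upper bound below the degeneration threshold.} Your Steps 2--4 follow the paper's argument; the threshold $\frac{n+2}{s-2}$ from the second-eigenvalue gap is exactly the paper's lemma on degenerate sequences. But excluding $\delta_k\to0$ requires the strict inequality $\mathfrak{a}_{n,p_j}<\frac{n+2}{p_j-2}$, and nothing in your proposal establishes it.

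Step 1 only gives finiteness. Moreover, the test functions you propose there, $v=c(1+\varepsilon h)$ with $-\Delta h=2(n+1)h$, are precisely the ones that saturate the threshold to leading order. By your own Step 2 expansion, their quotient is $\frac{n+2}{s-2}+O(\varepsilon)$. So ``above the uniform upper bound from Step 1'' is unsupported.

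``Far above $c_n$'' is also irrelevant as written, since you never relate $\mathfrak{a}_{n,p_j}$ to $\mathfrak{a}_n$. That relation is not automatic: the barycenter constraint $\fint x v^s=0$ depends on the exponent, so functions admissible for $\mathfrak{a}_n$ are not admissible for $\mathfrak{a}_{n,p_j}$. If the infimum equaled the threshold, a minimizing sequence could collapse to the constant and the direct method would fail.

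\paragraph{How to close it.} The paper proves $\limsup_{s\nearrow p}\mathfrak{a}_{n,s}\le\mathfrak{a}_n$. It corrects an admissible $u$ to $w_s=u\big(1-x\cdot M_s^{-1}\fint xu^s\big)^{1/s}$ with $M_s=\fint x\otimes x\,u^s$. This $w_s$ satisfies $\fint xw_s^s=0$ exactly and tends to $u$ in $W^{1,2}$. Combined with the standing hypothesis $\mathfrak{a}_n\le c_n<\frac{n^2-4}{4}$ of Theorem~\ref{thm:attainment}, this yields the strict inequality for large $j$. It also yields $\mathfrak{a}_{n,p_j}\le c_n+o(1)$, which the paper needs later for the Kazdan--Warner step and the exclusion of atoms.

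Alternatively, you can repair your own route by pushing the expansion to third order. With $\fint v^s=1$ and $\fint h=0$ one gets
\[
1-\fint v^2=(s-2)\varepsilon^2\fint h^2+\frac{(s-1)(s-2)}{3}\varepsilon^3\fint h^3+O(\varepsilon^4),\qquad \fint\abs{\nabla v}^2=2(n+1)\varepsilon^2\fint h^2+O(\varepsilon^4).
\]
Take the even zonal harmonic $h=x_{n+1}^2-\frac{1}{n+1}$, which satisfies the barycenter constraint for every $s$ and has $\fint h^3\neq0$. Choosing the sign of $\varepsilon$ so that $\varepsilon\fint h^3>0$ makes the quotient strictly less than $\frac{2(n+1)}{s-2}$, hence $\mathfrak{a}_{n,s}<\frac{n+2}{s-2}$. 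This suffices for the proposition, though not for the later $c_n+o(1)$ bound.
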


\begin{proof}
Fix $j$ and let $(v_k)$ be a minimizing sequence for $\mathfrak{a}_{n,p_j}$ subject to
\eq{\label{eq:subconstraints-vk}
    \fint v_k^{p_j}=1,\qquad \fint x v_k^{p_j}=0,\qquad \fint v_k^2<1.
}
Since $p_j<p$, the embeddings $W^{1,2}(\S^n)\hookrightarrow L^{p_j}(\S^n)$ and
$W^{1,2}(\S^n)\hookrightarrow L^2(\S^n)$ are compact. Thus, once we know that the
denominators $1-\fint v_k^2$ stay uniformly away from $0$, the direct method yields a
non-constant minimizer.
\end{proof}

We therefore exclude $1-\fint v_k^2\to 0$.

\begin{lemma}\label{lem:constant}
Suppose that $v_k$ is a minimizing sequence of $\mathfrak{a}_{n,p_j}$ satisfying
\eq{\label{eq:1001}
    \fint v_k^{p_j}=1, \qquad \fint xv_k^{p_j}=0, \qquad 1-\fint v_k^2\to0.
}
Then $v_k\to1$ strongly in $W^{1,2}(\S^n)$ and
\eq{\label{eq:constant-threshold}
    \liminf_{k\to\infty} \Big( \frac{\fint\abs{\nabla v_k}^2}{1-\fint v_k^2} -\frac n{p_j-2} \Big) \geq \frac{n+2}{p_j-2}.
}
\end{lemma}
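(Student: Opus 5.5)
We first show that the gradients go to zero and identify the limit. Since $(v_k)$ is a minimizing sequence and $\mathfrak{a}_{n,p_j}<\infty$, the quotients $\fint|\nabla v_k|^2/(1-\fint v_k^2)$ are bounded. Hence $\fint|\nabla v_k|^2\le C(1-\fint v_k^2)\to 0$. As in the rest of the paper we take $v_k\ge0$. Hölder and $\fint v_k^{p_j}=1$ bound $\fint v_k^2$, so $(v_k)$ is bounded in $W^{1,2}$. By Poincaré's inequality, $v_k-\fint v_k\to0$ in $W^{1,2}$. Passing to a subsequence, the means converge to a constant $c\ge0$, and the compact embedding into $L^{p_j}$ together with $\fint v_k^{p_j}=1$ forces $c=1$. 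Since the limit is unique, this gives $v_k\to1$ strongly in $W^{1,2}$ for the full sequence.

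For the threshold \eqref{eq:constant-threshold}, set $s:=p_j$ and write $v_k=a_k+\varphi_k$, where $a_k=\fint v_k\to1$ and $\fint\varphi_k=0$. We may assume the ratio $R_k:=\fint|\nabla\varphi_k|^2/\fint\varphi_k^2$ stays bounded along a subsequence; otherwise the quotient in \eqref{eq:constant-threshold} tends to $+\infty$, because $1-\fint v_k^2\le1-a_k^2$ and this will be shown to be $O(\fint\varphi_k^2)$. Under bounded $R_k$, the quantities $\|\varphi_k\|_{W^{1,2}}^2$ and $\fint\varphi_k^2=:\delta_k^2$ are comparable, and $\delta_k\to0$. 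Now use the Taylor bound
\[
\bigl|(1+x)^s-1-sx-\tfrac{s(s-1)}2x^2\bigr|\le C(|x|^3+|x|^s),
\]
together with Sobolev: since $3\le p$ for $n\ge5$, $\fint|\varphi_k|^3+\fint|\varphi_k|^s=O(\delta_k^3+\delta_k^s)=o(\delta_k^2)$. The constraint $\fint v_k^s=1$ then gives
\[
a_k^s+\tfrac{s(s-1)}2a_k^{s-2}\delta_k^2=1+o(\delta_k^2),
\]
hence $a_k^2=1-(s-1)\delta_k^2+o(\delta_k^2)$ and
\[
1-\fint v_k^2=1-a_k^2-\delta_k^2=(s-2)\delta_k^2+o(\delta_k^2).
\]

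Next we use the barycenter constraint. The same expansion of $\fint x v_k^s=0$ gives $s\,a_k^{s-1}\fint x\varphi_k=O(\delta_k^2)$. Therefore the projection $\pi_1\varphi_k$ of $\varphi_k$ onto the first spherical harmonics satisfies $\|\pi_1\varphi_k\|_{L^2}=O(\delta_k^2)=o(\delta_k)$. The remaining part $\varphi_k-\pi_1\varphi_k$ lies in harmonics of degree $\ge2$, whose lowest eigenvalue is $2(n+1)$, so
\[
\fint|\nabla\varphi_k|^2\ge2(n+1)\fint|\varphi_k-\pi_1\varphi_k|^2=2(n+1)\delta_k^2-o(\delta_k^2).
\]
Dividing, the quotient is at least $\frac{2(n+1)}{s-2}-o(1)$, and subtracting $\frac n{s-2}$ yields \eqref{eq:constant-threshold}.

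The main obstacle is making the error terms genuinely $o(\delta_k^2)$. This is exactly why we reduce to the bounded-ratio case: there all norms are comparable to $\delta_k$, and the cubic Taylor remainder is controlled via Sobolev (using $p\ge3$ when $n\ge5$). We must also check that $\fint|\nabla v_k|^2$, in which $a_k$ drops out, equals $\fint|\nabla\varphi_k|^2$ exactly.
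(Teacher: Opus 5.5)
Your strategy is the paper's: expand both constraints to second order, show that the first-harmonic part of the perturbation is of second order, and use the gap $2(n+1)$ on harmonics of degree $\ge2$. Centering at the mean $a_k$ rather than at $1$ (the paper uses $h_k=v_k-1$) is a harmless variant that removes the zeroth harmonic exactly. The identification of the limit, the expansion $1-\fint v_k^2=(s-2)\delta_k^2+o(\delta_k^2)$, the barycenter step and the spectral-gap step are all fine. However, two steps fail as written.

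The step you single out as the crux is wrong. It is not true that $p=\frac{2n}{n-2}\ge3$ for all $n\ge5$: this holds only for $n\le6$ (for $n=7$, $p=\frac{14}{5}$). So for $n\ge7$ one has $W^{1,2}(\S^n)\not\hookrightarrow L^3(\S^n)$. Then $\fint\abs{\varphi_k}^3$ is not controlled by $\norm{\varphi_k}_{W^{1,2}}^3$, and need not even be finite. The cubic remainder must be replaced by a superquadratic but subcritical one, as in the paper's $C(\abs{h}^{2+\delta}+\abs{h}^p)$. For $x\ge-1$ one has $\bigl|(1+x)^s-1-sx-\tfrac{s(s-1)}2x^2\bigr|\le C(\abs{x}^q+\abs{x}^s)$ with $q=\min\{3,s\}\in(2,p)$. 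In the bounded-ratio regime Sobolev then gives $\fint\abs{\varphi_k}^q+\fint\abs{\varphi_k}^s=O(\delta_k^q)=o(\delta_k^2)$.

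Your reduction to bounded $R_k$ is circular. To show the quotient blows up when $R_k\to\infty$ you invoke $1-a_k^2=O(\delta_k^2)$, but that expansion is derived only under bounded $R_k$. Without it the remainder $\fint\abs{\varphi_k}^s$ need not be $O(\delta_k^2)$. What does hold unconditionally (with $0\le a_k\le1$ by Jensen) is
\[
1-\fint v_k^2\le 1-a_k^2\le 1-a_k^s=\fint\bigl[(a_k+\varphi_k)^s-a_k^s-sa_k^{s-1}\varphi_k\bigr]\le C\delta_k^2+C\Bigl(\fint\abs{\nabla\varphi_k}^2\Bigr)^{s/2},
\]
using Sobolev--Poincar\'e for the mean-zero $\varphi_k$ and $s<p$. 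Since $s>2$ and $\fint\abs{\nabla\varphi_k}^2\to0$, the quotient is at least $\bigl(CR_k^{-1}+o(1)\bigr)^{-1}$, which tends to $+\infty$ if $R_k\to\infty$. In particular, for a minimizing sequence $R_k$ is automatically bounded. With these two repairs your argument is complete. The paper's bookkeeping of all errors as $o(\norm{h_k}_{W^{1,2}}^2)$ handles the large-ratio regime implicitly in exactly this way.
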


\begin{proof} 
Since $(v_k)$ is minimizing, $\fint\abs{\nabla v_k}^2/(1-\fint v_k^2)$ is bounded.
The assumptions imply $\fint\abs{\nabla v_k}^2\to0$. By the
Poincar\'e inequality and $v_k\ge0$, we have $v_k\to1$ strongly in $W^{1,2}$. Write
$v_k=1+h_k$, where $-1\le h_k\to0$ strongly in $W^{1,2}$. Since $p_j\nearrow p$, there
exist $\delta>0$ and $C>0$ (independent of $k$) such that
\eq{\label{eq:1002}
    \Abs{(1+h_k)^{p_j}-1-p_jh_k-\frac{p_j(p_j-1)}2h_k^2} \leq C(\abs{h_k}^{2+\delta}+\abs{h_k}^p).
}
By Sobolev embedding, the integral of the right-hand side is
$o(\|h_k\|_{W^{1,2}}^2)$ since $p>2$. Using \eqref{eq:1002} and expanding the first two
constraints in \eqref{eq:1001}, we obtain
\eq{\label{eq:h-mean}
 \fint h_k = -\frac{p_j-1}{2}\fint h_k^2 + o(\|h_k\|_{W^{1,2}}^2),\qquad \fint xh_k = O(\|h_k\|_{W^{1,2}}^2).
}
Hence the $0$th and $1$st spherical harmonic components of $h_k$ are of second order.
Since the next eigenvalue of $-\Delta$ on $\S^n$ is $2(n+1)$, we have
\eq{\label{eq:second-gap}
 \fint|\nabla h_k|^2
 \ge 2(n+1)\fint h_k^2+o(\|h_k\|_{W^{1,2}}^2).
}
On the other hand, \eqref{eq:h-mean} gives
\eq{\label{eq:den-expansion}
 1-\fint v_k^2
 =(p_j-2)\fint h_k^2+o(\|h_k\|_{W^{1,2}}^2).
}
 It
follows from \eqref{eq:second-gap} and \eqref{eq:den-expansion} that
\eq{
    \frac{\fint\abs{\nabla v_k}^2}{1-\fint v_k^2} \geq\frac{2(n+1)+o(1)}{p_j-2+o(1)}.
}
This implies \eqref{eq:constant-threshold}.
\end{proof}

Returning to the minimizing sequence $(v_k)$ for $\mathfrak{a}_{n,p_j}$, note that
$n\ge 5$ and
\eq{
    \mathfrak{a}_n \leq c_n = \frac{n(n-2)}{4(n^2-3n+1)} < \frac{n^2-4}{4} = \lim_{j\to\infty}\frac{n+2}{p_j-2}.
}

\begin{lemma}
We have
\eq{\label{sup}
    \limsup_{s\nearrow p}\mathfrak{a}_{n,s}\leq \mathfrak{a}_n.
}
\end{lemma}
\begin{proof}
Given any admissible $u\geq0$ with
\eq{
    \fint u^p=1, \qquad \fint xu^p=0, \qquad \fint u^2<1,
}
set
\eq{
    M_s \coloneqq \fint x\otimes x\,u^s.
}
It is clear that $M_s\to M_p$ as $s\to p$. Since $M_p$ is positive definite, for $s$ close to $p$, $M_s$ is invertible. We now construct a test function $v_s$ as follows. Set
\eq{
    w_s \coloneqq u\Big(1-x\cdot M_s^{-1}\fint xu^s\Big)^{\frac{1}{s}}, \qquad v_s\coloneqq \Big(\fint w_s^s \Big)^{-\frac{1}{s}}w_s.
}
One easily checks that
\eq{
    \fint xw_s^s = \fint xu^s\Big(1-x\cdot M_s^{-1}\fint xu^s\Big) = \fint xu^s - \fint xu^s = 0,
}
hence
\eq{\label{eq:10000}
    \fint v_s^s = 1, \qquad \fint xv_s^s = 0. 
}
Since
\eq{
    \phi_s\coloneqq\Big(1-x\cdot M_s^{-1}\fint xu^s\Big)^{\frac{1}{s}}\to1 \qquad\hbox{in}\quad C^1,
}
we have
\eq{
    \norm{w_s-u}_{W^{1,2}} \lesssim \norm{\phi_s-1}_{L^\infty}\norm{u}_{W^{1,2}} + \norm{\nabla\phi_s}_{L^\infty}\norm{u}_{L^2} \to 0.
}
Finally, since
\eq{
    \Big(\fint w_s^s \Big)^{-\frac{1}{s}} \to 1,
}
we see that $v_s\to u$ strongly in $W^{1,2}$. Since $\fint u^2<1$, we have $\fint v_s^2<1$ for $s$ close to $p$. Together with \eqref{eq:10000}, we see that $v_s$ is an admissible test function for $\mathfrak{a}_{n,s}$.
By definition of $\mathfrak{a}_{n,s}$, we have
\eq{
    \mathfrak{a}_{n,s} \leq \frac{ \fint \abs{\nabla v_s}^2 }{ 1-\fint v_s^2 } - \frac{n}{s-2}.
}
Letting $s\to p$ gives
\eq{
    \limsup_{s\nearrow p}\mathfrak{a}_{n,s} \leq \frac{ \fint \abs{\nabla u}^2 }{ 1-\fint u^2 } - \frac{n}{p-2} = \frac{ \fint \abs{\nabla u}^2 }{ 1-\fint u^2 } - \frac{n(n-2)}{4}.
}
Then taking infimum over all admissible $u$ gives \eqref{sup}.
\end{proof}

Consequently, for all sufficiently large $j$,
\eq{\label{eq:choose-s}
    \mathfrak{a}_{n,p_j}\le \mathfrak{a}_n+o(1)\leq c_n+o(1), \qquad \mathfrak{a}_{n,p_j}<\frac{n+2}{p_j-2}.
}
If along a minimizing sequence we had $1-\fint v_k^2\to0$, Lemma \ref{lem:constant}
would force the minimizing values to be at least $(n+2)/(p_j-2)$, contradicting
\eqref{eq:choose-s}. Hence $1-\fint v_k^2$ is bounded away from $0$ along a minimizing
sequence, and the direct method produces a minimizer $u_j\ge0 $. 

Standard Lagrange multiplier arguments (for the two constraints in
\eqref{eq:subconstraints}) yield a vector $\mu_j\in\R^{n+1}$ such that $u_j$ satisfies
\eq{\label{eq:ELj}
    -\Delta u_j+\kappa_j u_j =(\kappa_j+\mu_j\cdot x)u_j^{p_j-1}
}
and
\eq{\label{eq:1003}
    \fint \abs{\nabla u_j}^2 = \kappa_j\Big(1-\fint u_j^2\Big),
}
where $\kappa_j\coloneqq \mathfrak{a}_{n,p_j}+\frac{n}{p_j-2}$. Since $p_j<p$, standard
subcritical elliptic regularity yields $u_j\in C^{\infty}(\S^n)$, and Harnack's
inequality together with \eqref{eq:subconstraints} implies $u_j>0$.

\subsection{Kazdan--Warner type argument}

We now run the same argument as in Section~\ref{sec8}. Set
\eq{\label{eq:M_j}
    M_j\coloneqq\fint_{\S^n}x\otimes x\,u_j^{p_j},
}
and let $\lambda_j$ be the largest eigenvalue of $M_j$. Replacing $p$ by $p_j$ in the proof of Lemma~\ref{lem2.1} and then letting $j\to\infty$, we have
\eq{\label{eq:1004}
    \frac{1}{n+1}\leq\limsup_{j\to\infty}\lambda_j\leq\frac{3}{10}.
}
As in the proof of Lemma~\ref{lem7.2}, we have the Kazdan--Warner type identity
\eq{\label{eq:KWj}
    \Big(\frac{n-2}{2}-\frac n{p_j}\Big)\fint_{\S^n}x\abs{\nabla u_j}^2 + \frac{n(p_j-2)}{2p_j}\mathfrak{a}_{n,p_j}\fint_{\S^n}xu_j^2 = -\frac1{p_j}({\rm id}-M_j)\mu_j.
}
Using $M_j\le\lambda_j\mathrm{id}$, \eqref{eq:1003}, and \eqref{eq:KWj}, we have
\eq{\label{eq:muj-bound}
    \abs{\mu_j}\leq\frac{p_j}{1-\lambda_j}\Bigg\{\Abs{\frac{n-2}{2}-\frac n{p_j}}\kappa_j+\frac{n(p_j-2)}{2p_j}\mathfrak{a}_{n,p_j}\Bigg\}.
}
Since the right-hand side is uniformly bounded, after passing to a subsequence we have
\eq{
    \mu_j\to\mu, \qquad \mathfrak{a}_{n,p_j}\to  a_\infty\leq c_n, \qquad \kappa_j\to\kappa:=\frac{n(n-2)}4+a_\infty.
}
Hence taking limit of \eqref{eq:muj-bound} and using \eqref{eq:1004}, we have 
\eq{
    \abs{\mu} \leq \frac{p}{1-3/10}a_\infty = \frac{20n}{7(n-2)}a_\infty.
}
Since $a_\infty\leq c_n$, it follows that
\eq{\label{eq:K-gap}
    \frac{4}{n(n-2)}(\kappa+\abs{\mu}) \leq 1+\frac1{n^2-3n+1}\Big(1+\frac{20n}{7(n-2)}\Big) < \Big(\frac{10}{3}\Big)^{\frac{2}{n}}.
}

\subsection{Exclusion of atoms}

By \eqref{eq:choose-s} and \eqref{eq:1003}, we see that $(u_j)$ is bounded in $W^{1,2}(\S^n)$.
After passing to the same subsequence, we have
\eq{
    u_j^{p_j}\,\omega_n^{-1}\rdV\overset{*}{\rightharpoonup}\nu, \qquad u_j^p\,\omega_n^{-1}\rdV\overset{*}{\rightharpoonup}\rho, \qquad \abs{\nabla u_j}^2\, \omega_n^{-1}\rdV\overset{*}{\rightharpoonup}\eta.
}
Assume $x_i$ is an atom of any of the three limiting measures. Set
\eq{
    \nu_i=\nu(\{x_i\}), \qquad \rho_i=\rho(\{x_i\}), \qquad \eta_i=\eta(\{x_i\}).
}
The Euler--Lagrange equation \eqref{eq:ELj} gives
\eq{\label{eq:atom1}
    \eta_i=(\kappa+\mu\cdot x_i)\nu_i.
}
Using Lions' concentration–compactness lemma (see Lions \cite{Lions85} or Struwe \cite{Struwe2000}), the critical Sobolev inequality implies that
\eq{\label{eq:atom2}
    \frac{n(n-2)}{4}\rho_i^{\frac{n-2}{n}} \leq \eta_i.
}
Similarly, H\"older's inequality gives
\eq{\label{eq:atom3}
    \nu_i \leq \rho_i.
}
Combining \eqref{eq:K-gap}, \eqref{eq:atom1}, \eqref{eq:atom2}, and \eqref{eq:atom3} gives
\eq{\label{eq:atom4}
    \nu_i \geq \Big(\frac{n(n-2)}{4(\kappa+\mu\cdot x_i)}\Big)^{\frac{n}{2}} \geq \Big(\frac{n(n-2)}{4(\kappa+\abs{\mu})}\Big)^{\frac{n}{2}} > \frac{3}{10}.
}
The limit of \eqref{eq:M_j} is
\eq{
    M_j \to M_\infty = \int_{\S^n} x\otimes x \rd\nu.
}
Using \eqref{eq:1004} we have
\eq{
    \nu_i \leq \nu_i + \int_{\S^n\backslash\{x_i\}}(x_i\cdot x)^2 \rd\nu = \int_{\S^n} (x_i\cdot x)^2 \rd\nu = x_i^TM_\infty x_i \leq \limsup_{j\to\infty}\lambda_j \leq \frac{3}{10},
}
which contradicts \eqref{eq:atom4}.

Therefore, all atoms are excluded, which means that $u_j$ converges to $u_0$ strongly in $W^{1,2}$. Now it is easy to see that $u_0$ achieves $\mathfrak{a}_n$.

\medskip\bigskip

\noindent{\it Acknowledgements.} We thank Rupert Frank and Michael Loss for their stimulating papers \cites{FL1, FL22, Frank_Loss_2024}, which provide both beautiful results and compelling open problems, inspiring  a series of work.  M.~Z. was partly supported by IRTG~3132 of the Deutsche Forschungsgemeinschaft. The authors acknowledge the use of ChatGPT in Subsection~\ref{sec7.4} for suggesting the decomposition of $Q$ into the two terms in \eqref{eq:rewrite} and estimating them separately; we verified the argument and simplified the resulting estimates.

\printbibliography

\end{document}